\tikzstyle{dotnode} = [draw, fill, inner sep = 0pt, minimum size = 1.1mm, circle]
\tikzstyle{termnode} = [draw, fill = white, inner sep = 0pt, minimum size = 1.5mm, circle]
\definecolor{ffqqqq}{rgb}{1.,0.,0.}
\definecolor{uuuuuu}{rgb}{0.26666666666666666,0.26666666666666666,0.26666666666666666}
\definecolor{qqwuqq}{rgb}{0.,0.39215686274509803,0.}
\definecolor{qqqqff}{rgb}{0.,0.,1.}
\definecolor{zzttqq}{rgb}{0.6,0.2,0.}
\definecolor{qqqqff}{rgb}{0.,0.,1.}
\definecolor{ffvvqq}{rgb}{1.,0.3333333333333333,0.}
\definecolor{xdxdff}{rgb}{0.49019607843137253,0.49019607843137253,1.}
\newcommand\N{\mathbb{N}}
\newcommand\R{\mathbb{R}}
\def\P{\mathbb{P}}
\def\T{\mathbb{T}}
\def\B{\mathbb{B}}
\def\D{\mathbb{D}}
\def\diam{\mathrm{diam}\,}
\newtheorem{theorem}{Theorem}[section]
\newtheorem{definition}[theorem]{Definition}
\newtheorem{lemma}[theorem]{Lemma}
\newtheorem{proposition}[theorem]{Proposition}
\newtheorem{corollary}[theorem]{Corollary}
\theoremstyle{remark}
\newtheorem{remark}[theorem]{Remark}
\newtheorem{example}[theorem]{Example}
\numberwithin{equation}{section}
\numberwithin{figure}{section}
\begin{document}

\title{Self-contracted curves have finite length}

\author{Eugene Stepanov}%
\address{
St.Petersburg Branch
of the Steklov Mathematical Institute of the Russian Academy of Sciences,
Fontanka 27,
191023 St.Petersburg,
Russia
\and
Department of Mathematical Physics, Faculty of Mathematics and Mechanics,
St. Petersburg State University, Universitetskij pr.~28, Old Peterhof,
198504 St.Petersburg, Russia
\and ITMO University
}
\email{stepanov.eugene@gmail.com}
\thanks{The work of the first author has been partially financed
by
RFBR grant \#17-01-00678 and by the Russian government grant \#074-U01, the Ministry of Education and Science of Russian Federation project \#14.Z50.31.0031. The second author
also acknowledges the support of the St.~Petersburg State University grant \#6.37.208.2016 as well as of the social investment program ``Native towns'',
of the PJSC ``Gazprom Neft''.
}

\author{Yana Teplitskaya}%
\address{
Chebyshev Laboratory, St. Petersburg State University, 14th Line V.O., 29B, 199178 St. Petersburg, Russia}

\begin{abstract}
A curve $\theta$: $I\to E$ in a metric space $E$ equipped with the distance $d$,
where $I\subset \R$ is a (possibly unbounded) interval,
is called self-contracted, if for any triple of instances of time
$\{t_i\}_{i=1}^3\subset I$ with $t_1\leq t_2\leq t_3$ one has $d(\theta(t_3),\theta(t_2))\leq d(\theta(t_3),\theta(t_1))$.
We prove that if $E$ is a finite-dimensional normed space with an arbitrary 
norm, the trace of
$\theta$ 
is bounded,
then $\theta$ has finite length, i.e.\ is rectifiable, thus answering positively the question raised in~\cite{Lemenant16sc-rectif}.
\end{abstract}

\subjclass{Primary 53A04, 52A21; Secondary 37N40, 49J52, 49J53, 65K10}

\maketitle

\section{Introduction}

Let $E$ be a metric space equipped with the distance $d$.
A curve $\theta$: $I\to E$, where $I\subset \R$ is an (possibly unbounded) interval,
is called self-contracted, if for any triple of instances of time
$\{t_i\}_{i=1}^3\subset I$ with $t_1\leq t_2\leq t_3$ one has $d(\theta(t_3),\theta(t_2))\leq d(\theta(t_3),\theta(t_1))$.
Of particular interest are continuous self-contracted curves in a finite-dimensional space $\R^n$ equipped with some norm.
In~\cite{Daniilid2010sc-asympt} and~\cite{Longinet15sc-convexgrad}
it has been shown that such curves arise as steepest decent curves for convex and level set convex (sometimes called also
quasi-convex) functions in the Euclidean space.
In~\cite{Daniilid2010sc-asympt} it has been proven that every 
self-contracted curve in a bounded subset of $\R^2$ (equipped with the usual Euclidean distance)
necessarily has finite length, i.e.\ is \emph{rectifiable}. This result has been further extended to $\R^n$ with arbitrary $n\geq 1$, again equipped with
Euclidean norm, in~\cite{Daniilid15rectif} (and, independently, in~\cite{Longinet15sc-convexgrad} for continuous self-contracted curves) and to an arbitrary finite-dimensional Riemannian manifold in~\cite{Daniilid14sc-riem}.
Note that the self-contracted property
is quite sensible to the change
of the distance and even of the norm in $\R^n$, namely,
it is easy to provide examples of curves
self-contracted with respect to some norm and not self-contracted with respect to an equivalent one: for instance, a curve moving along three consecutive
sides of the square $[0,1]^2$ (say, clockwise, from the origin to
$(0,1)$, then to $(1,1)$ and finally to $(1,0)$) is self-contracted
with respect to the maximum (i.e.\ $\ell_2^\infty$) norm in $\R^2$,
but not with respect to the Euclidean one.
This raises the natural question whether it can be extended to self-contracted curves in $\R^n$ equipped with an arbitrary norm. This question has been posed in~\cite{Lemenant16sc-rectif}, and in the same paper a partial answer for uniformly convex smooth ($C^2$) norms has been given for the case $n=2$.
Here we give a positive answer for the case of a generic norm in $\R^n$, $n\geq 1$, not necessarily smooth. 

As opposed to~\cite{Daniilid2010sc-asympt,Daniilid15rectif} (and to~\cite{Lemenant16sc-rectif} which substantially extends the technique of~\cite{Daniilid15rectif}), where the proofs are based on finite ``continuous'' analysis arguments fundamentally relying on~\cite{Manselli91steepdec}, our technique has some ``discrete'' flavor. Namely, we first provide an estimate on self-contracted polygonal lines identified by the ordered
set $(A_1,\ldots,A_r)\in E^r$ of their vertices (of course, the use of the term ``polygonal line'' outside of a context of a linear vector space is an abuse of the language).
Namely, a vector of $(A_1,\ldots,A_r)\in E^r$
will be called \emph{self-contracted} with respect to the distance $d$ (or with respect to the norm $\|\cdot\|$, if $d$ is
coming from some norm $\|\cdot\|$), if $d(A_k, A_j)\leq d(A_k, A_i)$ whenever
$i\leq j\leq k$.
We show that if $E$ is a finite-dimensional normed space, then the total Euclidean length of the self-contracted polygonal line is estimated by
the Euclidean distance between its first and last vertex. This immediately proves the desired result for self-contracted curves.
The proof of the estimate of the length of  self-contracted polygonal lines is more or less discrete in nature and uses a specially tailored induction together with some combinatorial type arguments on particular arrangements of sets of vertices.

\section{Notation and preliminaries}

For $\{A,B\}\subset \R^n$ we denote by $(AB)$ the unique line defined by these points (of course, when they are distinct), by $[AB]$ the closed line segment with endpoints $A$ and $B$, by $|AB|$ its Euclidean length. The notation $|\cdot|$ will always stand for the Euclidean norm in $\R^n$, and $a\cdot b$ will stand for the standard scalar product between $a\in \R^n$ and $b\in \R^n$. For two vectors $\{\nu_1,\nu_2\}\subset \R^n$ we denote by $\widehat{(\nu_1,\nu_2)}$ the angle between them, so that
$\widehat{(\nu_1,\nu_2)}\in [0,\pi]$.
If $\ell\subset\R^n$ is a line and $\Pi\subset\R^n$ is a linear subspace of arbitrary positive dimension,
we denote by $\widehat{(\ell,\Pi)}$ the angle between them (i.e.\ the minimum angle between vectors belonging to
$\ell$ and $\Pi$ respectively), so that $\widehat{(\ell,\Pi)}\in [0,\pi/2]$.
The angle at vertex $B$ of a triangle with vertices $A$, $B$, $C$ will be denoted by $\angle ABC$.
 For a $\D\subset \R^n$ we denote 
by $\partial \D$ its topological boundary and by $\diam \D$ its Euclidean diameter (i.e.\ the diameter with respect to the Euclidean distance). For $\{a,b\}\in \R$ we write $a\vee b:=\max\{a,b\}$.

The notation $\nu^\bot$ for a $\nu\in \R^n$, unless otherwise explicitly defined, will stand for the linear subspace
$\{v\in \R^n\colon v\cdot \nu =0\}$, and $\Pi^\bot$ for a linear subspace $\Pi\subset \R^n$ will stand for its orthogonal
complement.

Fixed an $\varepsilon>0$, we call a segment $[AB]$ \emph{$\varepsilon$-horizontal} with respect to the linear subspace $\Pi\subset\R^n$ of arbitrary positive dimension,
if $\widehat{((AB), \Pi)}\leq \varepsilon$, and \emph{$\varepsilon$-vertical} with respect to this subspace otherwise
(we will abbreviate both notions to just \emph{horizontal} or \emph{vertical} respectively, if both the subspace and $\varepsilon$ are clear from the context). We use this notion in particular when the subspace $\Pi$ is one-dimensional
and coincides with some axis $x^j$ of some chosen coordinate system (the axis is seen as just a line, i.e.\ ignoring its direction). By $p_\Pi$ we denote the orthogonal projection onto $\Pi$.

If $E$ is an arbitrary set, and $(A_1, \ldots, A_r)\in E^r$ is an arbitrary vector of points of $E$, then
a vector $(A_{j_1},\ldots,A_{j_k})\in E^k$ with
$1\leq j_1 < j_2< \ldots < j_k\leq r$,
 will be called subvector
of $(A_1,\ldots,A_r)$, denoted by $(A_{j_1},\ldots,A_{j_k})\subset (A_1,\ldots,A_r)$.
If necessary, we identify the vector $(A_1, \ldots, A_r)\in E^r$ with the set $\{A_1, \ldots, A_r\}$, so that we write just
$(A_1, \ldots, A_r)\subset E$.
In case $E=\R^n$, we
call the \emph{variation} of $(A_1, \ldots, A_r)\in E^r$ (denoted by $\ell(A_1,\ldots, A_r)$) the Euclidean length  of the
the polygonal line
$A_1\ldots A_r:=\cup_{j=1}^{r-1} [A_jA_{j+1}]$
i.e.\
\[
\ell(A_1,\ldots,A_r):= \sum_{i=1}^{r-1} |A_iA_{i+1}|.
\]
Further, for a linear subspace $\Pi\subset \R^n$ of arbitrary positive dimension we define
the variation of $(A_1,\ldots,A_r)$ along $\Pi$ by
\[
\ell_\Pi (A_1,\ldots,A_r) := \ell(p_\Pi(A_1),\ldots,p_\Pi(A_r)).
\]
For a curve $\theta\colon I\to \R^n$, where $I\subset \R$ is an interval (not necessarily finite), we denote by $\ell(\theta)$ its parametric length defined by the usual formula
\[
\ell(\theta):=\sup\left\{\ell\left(\{\theta(t_j)\}_{j=1}^m\right)\colon \{ t_j\}_{j=1}^m\subset I, m\in \N\right\}.
\]

\section{Main results}

The following theorem is the first main result of this paper.

\begin{theorem}\label{th_scontr_estpolygonal1}
Let $A_i\in \R^n$, $i=1,\ldots, r$, and the vector $(A_1,\ldots,A_r)$ is self-contracted with respect to the norm $\|\cdot\|$,
then
\[
\ell(A_1,\ldots,A_r)\leq C|A_1A_r|
\]
for some $C>0$ depending only on $\|\cdot\|$  and on the space dimension $n$.
\end{theorem}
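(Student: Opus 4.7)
The natural attack is induction on the ambient dimension $n$. The base case $n=1$ reduces to showing that self-contracted sequences in $\R$ (where any norm is a positive multiple of $|\cdot|$) have total variation bounded by a constant times $|A_1A_r|$. This is elementary: the self-contracted property forces $A_1,\ldots,A_{k-1}$ to lie in the symmetric interval around $A_k$ of radius $|A_kA_1|$, ordered by closeness to $A_k$. Bookkeeping of the admissible position of $A_{k+1}$ given $A_1,\ldots,A_k$ then yields an absolute constant $C_1$ with $\ell(A_1,\ldots,A_r)\le C_1|A_1A_r|$.

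For the inductive step, assume the theorem holds in dimensions up to $n-1$. The elementary Euclidean inequality $|A_iA_{i+1}|\le\sqrt{n}\max_{j}|A_i^j-A_{i+1}^j|$ summed over consecutive vertices gives
\[
\ell(A_1,\ldots,A_r)\le \sqrt{n}\sum_{j=1}^n\ell_{e_j}(A_1,\ldots,A_r),
\]
so it suffices to bound each one-dimensional variation $\ell_{e_j}(A_1,\ldots,A_r)$. The tempting move is to apply the base case to the projected sequence $(p_{e_j}(A_i))_i$, but this projected sequence is in general \emph{not} self-contracted, which is the central difficulty. To circumvent this, I would fix a small $\varepsilon>0$ (depending on $\|\cdot\|$ and $n$) and, for each axis $e_j$, classify the segments of the polygonal line as $\varepsilon$-horizontal with respect to $e_j^\bot$ (having essentially all their length in $e_j^\bot$) or $\varepsilon$-vertical (so that a fraction of at least $\sin\varepsilon$ of their Euclidean length lies along $e_j$). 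The horizontal segments should be handled by invoking the inductive hypothesis on the projection onto $e_j^\bot\simeq\R^{n-1}$, after combinatorially extracting from the original index sequence ``monotone blocks'' on which the projected subsequence inherits the self-contracted property in the induced norm on $e_j^\bot$. The vertical segments are controlled by summing their $e_j$-components, which via the same monotone block structure should be dominated by $|A_1A_r|$ up to a constant factor.

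The crux of the argument, and where I expect the main obstacle to lie, is precisely this combinatorial construction of the monotone blocks, together with the verification that (i) on each block the projected sub-polygonal line is self-contracted in the inherited norm, so that the inductive hypothesis is legitimately applicable, and (ii) both the number of blocks and the total ``interblock jumps'' contribute at most $C(\|\cdot\|,n)\,|A_1A_r|$ to the variation. This is exactly the ``specially tailored induction together with combinatorial arguments on particular arrangements of sets of vertices'' promised in the introduction. Once these two points are in place, summing the $n$ axial contributions produces the desired estimate $\ell(A_1,\ldots,A_r)\le C|A_1A_r|$, with $C$ depending only on $\|\cdot\|$ and $n$, and the rectifiability of bounded self-contracted curves follows immediately by taking the supremum of $\ell$ over finite samples of the parameter interval.
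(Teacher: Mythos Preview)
Your proposal has a genuine structural gap: the inductive step hinges on extracting ``monotone blocks'' on which the projection to $e_j^\bot$ is self-contracted with respect to an ``inherited norm'', but you neither specify that norm nor give any mechanism that would force the projected subsequence to be self-contracted with respect to it. In fact, for a generic norm $\|\cdot\|$ there is no reason for orthogonal projection onto a fixed coordinate hyperplane to interact well with the self-contracted condition, no matter how one slices the index set. The self-contracted inequality $\|A_k-A_j\|\le\|A_k-A_i\|$ compares full $n$-dimensional distances; passing to $p_{e_j^\bot}$ destroys this unless the discarded $e_j$-component is controlled, and that control is exactly what you are trying to prove. So the argument is circular at its core, and the acknowledged ``main obstacle'' is not a technicality but the entire content of the theorem.

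The paper's route is quite different and avoids projecting altogether. It does \emph{not} induct on the ambient dimension. Instead, for fixed $n$ it partitions the unit sphere $\partial\B$ into finitely many pieces $\T_i$ on each of which the external normals are $\delta$-close to a single direction $\nu^i$, and sets $\P_i:=\bigcup_{t\in[0,1]}t\T_i$. The crucial geometric fact (Lemma~\ref{lm_scontr_epsconvA1Ar}) is that if all the $A_j$ lie in a single translated cone $\P_i+A_r$, then any segment $[A_jA_{j+1}]$ that is $\delta$-vertical with respect to $(\nu^i)^\bot$ must point \emph{downward} along $\nu^i$; this is the monotonicity that replaces your hoped-for inherited self-contractedness. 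The induction (Proposition~\ref{prop_scontr_indmain0}) is then over the length of an ``admissible tuple'' $(\P_{\alpha_i},\ldots,\P_{\alpha_n})$ of such cones, each cone pinning down one more nearly-monotone direction; the base case already lives in $\R^n$ with $n$ directions fixed, and the inductive step peels off one direction at a time by the horizontal/vertical dichotomy you described---but always in the original $\R^n$, never in a quotient. The final reduction of Theorem~\ref{th_scontr_estpolygonal1} to this proposition is done by embedding $\R^n$ into $\R^{n+1}$ so that the whole sequence sits in one cone of the extended ball. In short: the adaptive, norm-dependent cone decomposition is what makes the induction close, and fixed coordinate axes cannot play that role.
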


An immediate consequence of the above result is the following theorem on self-contracted curves giving the complete answer to the question posed in the Introduction.

\begin{theorem}\label{th_scontr_estcurve1}
Let $E$ be a finite-dimensional space equipped with the norm $\|\cdot\|$, and
let $\theta\colon I\to E$, where $I\subset \R$ is a (possibly unbounded) interval, be a 
self-contracted curve with trace in a bounded set $\D\subset E$.
Then $\ell(\theta)\leq C\diam  \D$ for some $C>0$ depending only on $\|\cdot\|$ and on the space dimension $n$.
\end{theorem}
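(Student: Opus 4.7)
The plan is to reduce the continuous statement to the polygonal one already granted by Theorem~\ref{th_scontr_estpolygonal1}. First I would fix any linear isomorphism identifying $E$ with $\R^n$; this allows the norm $\|\cdot\|$ to be viewed as a norm on $\R^n$, while equipping $E$ with a background Euclidean structure so that the length functionals $\ell(\cdot)$ and the diameter $\diam\D$ from the notation section make sense. The constants appearing below depend on this identification only through $\|\cdot\|$ and $n$.

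Next, I would unwind the definition of $\ell(\theta)$. By definition,
\[
\ell(\theta) = \sup\left\{\ell(\theta(t_1),\ldots,\theta(t_m))\colon t_1<t_2<\cdots<t_m,\ \{t_j\}_{j=1}^m\subset I,\ m\in\N\right\}.
\]
For any such finite increasing family $t_1<\cdots<t_m$, the self-contracted property of the curve $\theta$, applied to every triple $t_i\le t_j\le t_k$ in $\{t_1,\ldots,t_m\}$, says exactly that the finite vector
\[
(A_1,\ldots,A_m):=(\theta(t_1),\ldots,\theta(t_m))\in(\R^n)^m
\]
is self-contracted with respect to $\|\cdot\|$ in the sense defined before Theorem~\ref{th_scontr_estpolygonal1}.

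The key step is then to invoke Theorem~\ref{th_scontr_estpolygonal1} on this finite vector, which gives a constant $C=C(\|\cdot\|,n)$ (independent of $m$ and of the chosen sampling) such that
\[
\ell(\theta(t_1),\ldots,\theta(t_m))\le C\,|\theta(t_1)\theta(t_m)|.
\]
Since the trace of $\theta$ lies in $\D$, both endpoints $\theta(t_1),\theta(t_m)\in\D$, so $|\theta(t_1)\theta(t_m)|\le\diam\D$ by definition of the Euclidean diameter. Taking the supremum over all finite increasing samplings yields $\ell(\theta)\le C\diam\D$, which is the conclusion.

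There is essentially no obstacle here beyond bookkeeping: the only thing to watch is that Theorem~\ref{th_scontr_estpolygonal1} is formulated for $\R^n$ and uses the Euclidean metric on the right-hand side, whereas Theorem~\ref{th_scontr_estcurve1} is stated for an abstract finite-dimensional normed space $E$ with diameter measured in the Euclidean sense; both subtleties are absorbed by fixing one linear identification $E\cong\R^n$ at the start and noting that the constant $C$ provided by Theorem~\ref{th_scontr_estpolygonal1} depends only on the resulting norm and on $n$.
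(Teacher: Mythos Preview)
Your proposal is correct and follows exactly the same route as the paper: sample finitely many times, note that the resulting vector is self-contracted, apply Theorem~\ref{th_scontr_estpolygonal1}, bound $|A_1A_m|$ by $\diam\D$, and pass to the supremum. The paper's proof is the same two lines minus your (reasonable) bookkeeping remark about identifying $E$ with $\R^n$.
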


\begin{proof}
Consider an arbitrary finite set $\{t_i\}_{i=1}^r\subset I$, $t_i\leq t_{i+1}$. We have now that
for $A_i:=\theta(t_i)$ the vector $(A_1,\ldots, A_r)$ is self-contracted in $E$.
Thus
\[
\ell(A_1,\ldots,A_r)\leq C|A_1A_r|\leq C\diam  \D
\]
by Theorem~\ref{th_scontr_estpolygonal1}, concluding the proof.
\end{proof}

\begin{remark}
The proofs of the above Theorems~\ref{th_scontr_estpolygonal1} and~\ref{th_scontr_estcurve1} never use essentially
the symmetry of the norm
$\|\cdot\|$. If the norm is not symmetric (i.e.\ not necessarily satisfying the assumption $\|-x\|=\|x\|$ for all $x\in E$), with the distance (now not necessarily symmetric anymore)
defined still by $d(x,y):=\|y-x\|$, then the geometric meaning of the self-contracted property of the curve
does not change with respect to the standard situation of a symmetric norm, that is, for every triple of instances of time
$\{t_i\}_{i=1}^3\subset I$ with $t_1\leq t_2\leq t_3$ and $\theta(t_1)$ on the boundary of a ball of the norm
(now a generic, not necessarily symmetric bounded convex absorbing set) centered at $\theta(t_3)$ one has
that $\theta(t_2)$ cannot be strictly outside of the latter ball.
Then using Lemma~\ref{lm_scontr_mediatr1} one has to change its claim as described
in Remark~\ref{rm_sc_mediatr1}. This would change the constant $3/4$ in the claim of Lemma~\ref{lm_scontr_horiz1} and hence
also the explicit constants in all the subsequent lemmata have to be substituted by constants dependent only on the norm $\|\cdot\|$, but all the respective results will remain true up to such modification of constants.
Thus both Theorem~\ref{th_scontr_estpolygonal1} and Theorem~\ref{th_scontr_estcurve1} are in fact true for possibly not symmetric norms.
\end{remark}

\begin{remark}
It is important to note that not every self-contracted curve with bounded trace in a finite-dimensional normed space  is continuous. In fact, the easy example $\theta\colon [0,1]\to \R$ defined by
$\theta(t):=0$ for $t\in [0,1/2)$ and $\theta(t):=1$ for $t\in [1/2,1]$ provides a discontinuous self-contracted curve even in $\R$.
\end{remark}

%

The example below shows that no similar result can be expected in an infinite-dimensional situation
(even in an infinite-dimensional Hilbert space instead of the Euclidean one).

\begin{example}\label{ex_scontr_inflen2}
Let $\ell^2$ stand for the standard Hilbert space of square summable sequences equipped with its usual norm $\|\cdot\|_2$,
$\{e_k\}_{k=1}^\infty$ standing for its usual orthonormal basis.
Then the curve $\theta\colon [0,+\infty)\to \ell^2$ defined by
\[
\theta(t):=\left\{
\begin{array}{rl}
     0, & t\in \left[0, 1\right),\\
    \sum_{j=1}^{k-1} \frac{1}{j} e_j, & t\in \left[k-1, k\right), \quad k\in \N, k\geq 2,
\end{array}
\right.
\]
is self-contracted because
\begin{align*}
    \|\theta(k),\theta(l)\|_2^2 & = \sum_{j=l}^{k-1} \frac{1}{j^2} >  \sum_{j=m}^{k-1} \frac{1}{j^2}
= \|\theta(k),\theta(m)\|_2^2
\end{align*}
when $l < m < k$, $\{l,m,k\}\subset \N$,
and its trace belongs even to a compact subset of $\ell^2$ (the Hilbert cube), but
$\ell(\theta)\geq \sum_{k=1}^\infty 1/k =+\infty$.
The same curve restricted to every finite interval of time, say, $[0, n]$, $n\in \N$,
provides an example of a self-contracted curve in a bounded subset of the Euclidean space $\R^n$,
for which the constant $C$ in Theorem~\ref{th_scontr_estcurve1} tends to infinity as $n\to +\infty$.
The same example can be also easily interpreted in the language of self-contracted polygonal lines rather than curves.
It is also an easy exercise to transform this example in the one with continuous self-contracted curves.
\end{example}

It is worth providing also another simple though instructive example.

\begin{example}\label{ex_scontr_inflen3}
Let $L^2(0,1)$ stand for the standard Lebesgue space of square integrable functions over $(0,1)$ equipped with its usual
norm still denoted $\|\cdot\|_2$ (there is obviously no confusion with the previous example, though the notation for the norm is the same),
and the curve $\theta\colon [0,1]\to L^2(0,1)$  be defined by $\theta(t):= \mathbf{1}_{[0,t]}$, the characteristic function
of the interval $[0,t]$, $t\in [0,1]$. It is obviously self-contracted because
$\|\theta(t)-\theta(s)\|_2 = |t-s|^{1/2}$,
and the same relationship shows that it is not rectifiable,
though its image is a compact set (as a continuous image of $[0,1]$). Note that this is nothing but a standard construction of the isometric embedding into $L^2(0,1)$ of the ``snowflake'' space $[0,1]$ equipped with the distance $d(t,s):=|t-s|^{1/2}$.
\end{example}


The rest of the paper will be dedicated to the proof of Theorem~\ref{th_scontr_estpolygonal1}, first in an easy particular case
and then in the general situation.

\section{The heart of the proof: an easy case}\label{sec_sc_easy1}

Before presenting the quite lengthy and technical proof of Theorem~\ref{th_scontr_estpolygonal1} in its full generality, we provide here for the readers' convenience its extremely simple version for a particular situation of the two-dimensional space $\R^2$ equipped with the maximum norm $\|(x_1,x_2)\|_\infty:= |x_1|\vee |x_2|$, so that its closed unit ball
$\B$ is the square $[-1,1]^2$. This proof, though quite immediate, represents the heart of our general construction, and hence hopefully simplifies the reading of the general proof. We will further comment on how the general proof is obtained from this easy particular situation.

\begin{proof}[Proof of Theorem~\ref{th_scontr_estpolygonal1}, easy case]
The ball $\B=[-1,1]^2$ can be represented as the union of $N=4$ triangles $\{\P_i\}_{i=1}^N$, the vertices of each of the triangles $\P_i$ being the origin and two neighboring vertices of the square $[-1,1]^2$. We assume the triangles to be intersecting each other only at the origin (so they are neither open nor closed). By scaling we may assume without loss of generality that $A_j\in \B + A_r$ for all $j=1,\ldots, r$. The rest of the proof is divided then in three steps.

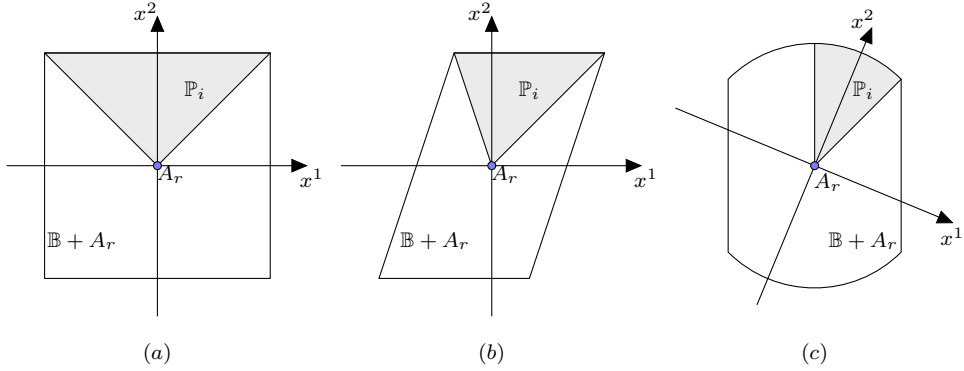
\begin{figure}
\definecolor{zzttqq}{rgb}{0.6,0.2,0.}
\definecolor{uuuuuu}{rgb}{0.26666666666666666,0.26666666666666666,0.26666666666666666}
\definecolor{xdxdff}{rgb}{0.49019607843137253,0.49019607843137253,1.}
\begin{tikzpicture}[line cap=round,line join=round,>=triangle 45,x=0.5cm,y=0.5cm]
\fill[color=lightgray,fill=lightgray, fill opacity=0.3] (-3.,3.) -- (0.,0.) -- (3.,3.) -- cycle;
\draw (-3.,3.)-- (-3.,-3.) -- (3., -3) -- (3., 3.) -- cycle;
\draw [color=black] (-3.,3.)-- (0.,0.);
\draw [color=black] (0.,0.)-- (3.,3.);
\draw [color=black] (3.,3.)-- (-3.,3.);
\draw[->,color=black] (-4.,0.) -- (4.,0.);
\draw[->,color=black] (0.,-4) -- (0.,4);
\begin{scriptsize}
\draw [fill=xdxdff] (0.,0.) circle (1.5pt);
\draw[color=black] (0.351812943871707729,-0.2359129438717078) node {$A_r$};
\draw[color=black] (4.104601374570456,-0.3352875143184434) node {$x^1$};
\draw[color=black] (-0.30942749140893568,4.1056139747995364) node {$x^2$};
\draw[color=black] (1.,2.) node {$\P_i$};
\draw[color=black] (-2.,-2.) node {$\B+A_r$};
\draw[color=black] (0.,-5.) node {$(a)$};
\end{scriptsize}
\end{tikzpicture}
\begin{tikzpicture}[line cap=round,line join=round,>=triangle 45,x=0.5cm,y=0.5cm]
\fill[color=lightgray,fill=lightgray, fill opacity=0.3] (-1.,3.) -- (0.,0.) -- (3.,3.) -- cycle;
\draw (-1.,3.)-- (-3.,-3.) -- (1., -3) -- (3., 3.) -- cycle;
\draw [color=black] (-1.,3.)-- (0.,0.);
\draw [color=black] (0.,0.)-- (3.,3.);
\draw [color=black] (3.,3.)-- (-1.,3.);
\draw[->,color=black] (-4.,0.) -- (4.,0.);
\draw[->,color=black] (0.,-4) -- (0.,4);
\begin{scriptsize}
\draw [fill=xdxdff] (0.,0.) circle (1.5pt);
\draw[color=black] (0.351812943871707729,-0.2359129438717078) node {$A_r$};
\draw[color=black] (4.104601374570456,-0.3352875143184434) node {$x^1$};
\draw[color=black] (-0.30942749140893568,4.1056139747995364) node {$x^2$};
\draw[color=black] (1.,2.) node {$\P_i$};
\draw[color=black] (-1.5,-2.) node {$\B+A_r$};
\draw[color=black] (0.,-5.) node {$(b)$};
\end{scriptsize}
\end{tikzpicture}
\begin{tikzpicture}[line cap=round,line join=round,>=triangle 45,x=0.5cm,y=0.5cm]
\fill[color=lightgray,fill=lightgray, fill opacity=0.3] (0.,0.)--(2.3,2.3) arc (45.: 90.: {2.3*sqrt(2)})  -- (0.,0.);
\draw (2.3,-2.3)--(2.3,2.3) arc (45.: 135.: {2.3*sqrt(2)})  -- (-2.3,2.3) -- (-2.3,-2.3) arc (225.: 315.: {2.3*sqrt(2)}) -- cycle;
\draw [color=black] (0,{2.3*sqrt(2)})-- (0.,0.);
\draw [color=black] (0.,0.)-- (2.3,2.3);
\draw[->,color=black,rotate=-22.5] (-4.,0.) -- (4.,0.);
\draw[->,color=black,rotate=-22.5] (0.,-4) -- (0.,4);
\begin{scriptsize}
\draw [fill=xdxdff] (0.,0.) circle (1.5pt);
\draw[color=black] (0.351812943871707729,-0.4359129438717078) node {$A_r$};
\draw[color=black,rotate=-22.5] (4.104601374570456,-0.3352875143184434) node {$x^1$};
\draw[color=black,rotate=-22.5] (-0.30942749140893568,4.1056139747995364) node {$x^2$};
\draw[color=black] (1.3,2.) node {$\P_i$};
\draw[color=black] (1.3,-2.) node {$\B+A_r$};
\draw[color=black] (0.,-5.) node {$(c)$};
\end{scriptsize}
\end{tikzpicture}
\caption{(a) Proof of the easy case of Theorem~\ref{th_scontr_estpolygonal1} ($n=2$, maximum norm), Step~2; (b) the case $n=2$, the unit ball is a convex polygon; (c) the case $n=2$, generic norm.}
\label{fig_sc_easy1}
\end{figure}

{\em Step 1}. We claim that
\begin{equation}\label{eq_scontr_varAikjk1_a0}
\ell(A_{1},\ldots, A_{r})\leq C_1 \sum_{i=1}^N
\ell((A_{1},\ldots, A_{r})\cap (\P_i+A_{r})) + C_2  |A_{1}A_{r}|
\end{equation}
for some positive constants $C_1$ and $C_2$.
In fact, consider an arbitrary $j\in \{1,\ldots, r-1\}$ such that $A_{j+1}\in \P_i+A_{r}$, but $A_j\not \in \P_i+A_{r}$
for some $i=1,\ldots, N$. Then for every fixed $i\in \{1, \ldots, N\}$ either
\begin{itemize}
  \item[(i)] $j$ is the first integer in $\{1, \ldots, r\}$ such that $A_{j+1}\in \P_i+A_{r}$, i.e.\
  $\{s\in \{1,\ldots, j\}\colon A_s\in \P_i+A_{r}\}=\emptyset$. In this case, since $\{A_j,A_{j+1}\}\in \lambda \B+A_{r}$
  with $\lambda:= \|A_1 A_r\|$ (because $(A_1,\ldots, A_r)$ is self-contracted), then
  \[
  |A_jA_{j+1}|\leq \sqrt{2} \|A_jA_{j+1}\| \leq 2 \sqrt{2} \|A_1A_r\| \leq 2 \sqrt{2} |A_1A_r|.
  \]
For each $i=1,\ldots, N$ except one (for which $A_1\in \P_i + A_r$) there is clearly one and only one such $j$ and hence the sum of Euclidean lengths of all such line segments $|A_jA_{j+1}|$ through all $\P_i$, $i=1,\ldots, N$, is estimated from above by $C_2 |A_{1}A_{r}|$, with $C_2 := (N-1)2\sqrt{2}=6\sqrt{2}$;
  \item[(ii)] or there is an
\[
s(j):=\max\{ s\in \{1,\ldots, j\}\colon A_s\in \P_i+A_{r}\},
\]
and $s(j)<j$ by the definition of $s(\cdot)$, hence
\[
|A_jA_{j+1}|\leq \sqrt{2} \|A_jA_{j+1}\| \leq \sqrt{2} \|A_{s(j)} A_{j+1}\|\leq  \sqrt{2}|A_{s(j)}A_{j+1}|.
\]
Then with $C_2:=\sqrt{2}$ one has
\begin{equation*}\label{eq_sc_easii1}
\begin{aligned}
\sum_{\stackrel{j\in \{1,\ldots, r-1\}}{\{A_j,A_{j+1}\}\subset \P_i+A_{r}}} & |A_jA_{j+1}| +
\sum_{\stackrel{j\in \{1,\ldots, r-1\}}{\mathrm{as~in~(ii)}}} |A_{j}A_{j+1}| \\
& \leq
C_2 \left(\sum_{\stackrel{j\in \{1,\ldots, r-1\}}{\{A_j,A_{j+1}\}\subset \P_i+A_{r}}} |A_jA_{j+1}| +
\sum_{\stackrel{j\in \{1,\ldots, r-1\}}{\mathrm{as~in~(ii)}}} |A_{s(j)}A_{j+1}|\right) \\
& = C_2 \ell((A_{1},\ldots, A_{r})\cap (\P_i+A_{r})).
\end{aligned}
\end{equation*}
\end{itemize}
Hence, from~(i) and~(ii), we get~\eqref{eq_scontr_varAikjk1_a0}.

{\em Step 2}. Assume now that $(A_{1},\ldots, A_{r})\subset \P_i+A_r$ for some $i=1,\ldots, N$, and show that
\begin{equation}\label{eq_sc_easyLx}
\ell(A_{1},\ldots, A_{r})\leq C|A_{1}A_{r}|
\end{equation}
for some universal constant $C>0$.
We consider to this aim the system of cartesian coordinates with axes passing through $A_r$ (considered then as the origin of the system), with
 the axis $x^2$ directed perpendicular to the side of $\P_i$ coinciding with a side of the square $\partial \B$ (see Figure~\ref{fig_sc_easy1}(a)), and the axis $x^1$ parallel to the latter side.
Then
\begin{equation}\label{eq_sc_easyLx0}
\ell(A_{1},\ldots, A_{r})\leq \sum_{j=1}^{r-1}\left( |x^1_{j+1}-x^1_j| +  |x^2_{j+1}-x^2_j|\right) =
\ell_{x^1}(A_{1},\ldots, A_{r})+\ell_{x^2}(A_{1},\ldots, A_{r}),
\end{equation}
where $x_j^l:= p_{x^l} (A_j)$, $l=1,2$, $j=1,\ldots, r$.
But $x^2_{j+1}\leq x^2_j$ for all $j$ (because $(A_{1},\ldots, A_{r})$ is self-contracted and $(A_{1},\ldots, A_{r})\subset \P_i+A_r$), so that
\begin{equation}\label{eq_sc_easyLx2}
\ell_{x^2}(A_{1},\ldots, A_{r})=\sum_{j=1}^{r-1}|x^2_{j+1}-x^2_j| =-\sum_{j=1}^{r-1}(x^2_{j+1}-x^2_j)= |x^2_{r}-x^2_1|.
\end{equation}

To estimate
$\ell_{x^1}(A_{1},\ldots, A_{r})$, observe that there is a subset
$(A_{q_1},\ldots, A_{q_l})\subset (A_{1},\ldots, A_{r})$ having the same $\ell_{x^1}$
but with $(x_{q_j}^1-x_{q_{j+1}}^1)(x_{q_{j-1}}^1-x_{q_j}^1) <0$, i.e.\
the projection of $(A_{q_j}A_{q_{j+1}})$ over $x^1$ is directed oppositely to that of
$(A_{q_{j-1}}A_{q_j})$, for all $j=2,\ldots, l-1$. The respective set of indices
$\Lambda:= \{{q_1},\ldots, {q_l}\}$  is formed by downward induction, namely,
setting $q_l:=r$, $q_{l-1}:=r-1$ and then for each $j$ having determined $q_j$ and $q_{j+1}$, finding the maximum index $s<j$ such that
$(x_{q_j}^1-x_{q_{j+1}}^1)(x_{s}^1-x_{q_j}^1) <0$ and setting $q_{j-1}:= s$.
Since clearly $\ell_{x^1}(A_{j-1},A_j, A_{j+1}) = \ell_{x^1}(A_{j-1},A_{j+1})$ when
the projections of $(A_{j-1}A_{j})$ and of $(A_{j}A_{j+1})$ over $x^1$ have the same direction,
then
\[
\ell_{x^1}(A_{q_1},\ldots, A_{q_l})=\ell_{x^1}(A_{1},\ldots, A_{r}),
\]
and therefore we may assume without loss of generality (up to renaming the indices) that the original
vector $(A_{1},\ldots, A_{r})$ has the property that
the projection of $(A_jA_{j+1})$ over $x^1$ is directed oppositely to that of
$A_{j-1}A_{j}$, for all $j=2,\ldots, r-1$.
Now, we note that
\[
|x^1_{j+1}-x^1_{j-1}| \vee |x^2_{j+1}-x^2_{j-1}| = \|A_{j-1}A_{j+1}\| \geq \|A_jA_{j+1}\|
\geq |x^1_{j+1}-x^1_{j}|
\]
implies that
\begin{itemize}
\item[(i)] either $|x^1_{j+1}-x^1_j|  \leq |x^2_{j+1}-x^2_{j-1}|$
(i.e.\ the segment $(A_{j-1}A_{j+1})$ is ``vertical'' in the sense that its maximum norm is given
by the length of its projection onto $x^2$),
\item[(ii)] or $|x^1_{j+1}-x^1_j|  \leq |x^1_{j+1}-x^1_{j-1}|$
(i.e.\ the segment $(A_{j-1}A_{j+1})$ is ``horizontal'' in the sense that its maximum norm is given
by the length of its projection onto $x^1$).
\end{itemize}
In case~(ii) $x_{j+1}^1$ is closer to $x_{j}^1$ than to $x_{j-1}^1$ but lies on the same side of
$x_{j}^1$ as $x_{j-1}^1$, so that
\[
|x^1_{j+1}-x^1_j| \leq \frac{1}{2}|x^1_{j}-x^1_{j-1}|,
\]
and therefore
in either of the cases
\begin{align*}
|x^1_{j+1}-x^1_j| &\leq \frac{1}{2}|x^1_{j}-x^1_{j-1}|+ |x^2_{j+1}-x^2_{j-1}|.
\end{align*}
Thus, by induction,
\begin{align*}
|x^1_{j+1}-x^1_j| &\leq \frac{|x^1_{2}-x^1_1| }{2^{j-1}}+\frac{1}{2}\sum_{k=3}^{j+1} \frac{|x^2_{k}-x^2_{k-2}|}{2^{j-k}},
\end{align*}
and therefore,
\begin{equation}\label{eq_sc_easyLx1}
\begin{aligned}
\ell_{x^1} & (A_{1},\ldots, A_{r}) =
\sum_{j=1}^{r-1}|x^1_{j+1}-x^1_j|
\leq |x^1_{2}-x^1_1| \sum_{j=1}^{r-1} \frac{1}{2^{j-1}} + \frac{1}{2} \sum_{j=1}^{r-1} \sum_{k=3}^{j+1}\frac{|x^2_{k}-x^2_{k-2}|}{2^{j-k}}\\
&= |x^1_{2}-x^1_1| \sum_{j=1}^{r-1} \frac{1}{2^{j-1}} + \sum_{k=3}^{r}|x^2_{k}-x^2_{k-2}| \sum_{j=k-1}^{r-1} \frac{1}{2^{j-k+1}}\\
&\leq  2 |x^1_{2}-x^1_1| + 2 \sum_{k=3}^{r}|x^2_{k}-x^2_{k-2}|\leq
2|x^1_{2}-x^1_1|+2\ell_{x^2}(A_{1},\ldots, A_{r})
\\
& \leq 2|x^1_{2}-x^1_1|+2|x^2_{1}-x^2_r| 
\quad\mbox{by~\eqref{eq_sc_easyLx2}}\\
&\leq 2 |A_{1}A_2|+2|A_{1}A_r|\leq 4\sqrt{2}|A_{1}A_r| + 2|A_1A_r|
\quad\mbox{since $\{A_1,A_2\}\subset A_r+\|A_1A_r\|\B$}.
\end{aligned}
\end{equation}
Plugging~\eqref{eq_sc_easyLx1} and~\eqref{eq_sc_easyLx2} into~\eqref{eq_sc_easyLx0}, we get~\eqref{eq_sc_easyLx} as claimed.

{\em Step 3}. Denoting now
$(A_{j^1_i},\ldots, A_{j^{m(i)}_i}):=(A_{1},\ldots, A_{r})\cap (\P_i+A_{r})$ for each $i=1,\ldots, N$, where
$j^l_i\in \{1, \ldots, r\}$ (and, clearly, $j^{m(i)}_i=r$), one has applying the result of Step~2 (with
$j^1_i$ instead of $1$ and $j^{m(i)}_i$ instead of $r$) the estimate
\[
\ell((A_{1},\ldots, A_{r})\cap (\P_i+A_{r})) \leq C |A_{j^1_i}A_{j^{m(i)}_i}| = C |A_{j^1_i}A_{r}|\leq C\sqrt{2} |A_{1}A_{r}|,
\]
and hence applying~\eqref{eq_scontr_varAikjk1_a0}, we conclude the proof.
\end{proof}

\subsection*{From an easy particular case to the general situation}

The above argument captures all the essential features of the complete proof: namely, the division of the unit ball $\B$ of the norm in a finite number of ``cone-like'' subsets $\P_i$, the reduction of the estimate of the variation of $(A_1, \ldots, A_r)$ to the estimates of variations of its subvectors in each $\P_i$ (i.e.\ Step~1 of the above proof, cfr.\ Step~1 of the proof of the key Lemma~\ref{lm_scontr_ind0} in the sequel), and
the separate estimate of the variation of a polygonal line inside each $\P_i$ along each of the appropriately chosen axes
with a separate consideration of ``vertical'' and ``horizontal'' line segments (i.e.\ Step~2 of the above proof).
However, there are several substantial difficulties arising on this way, which we list below.
\begin{itemize}
  \item[(i)] The first difficulty comes already when trying to generalize the above argument to the case $n=2$ and the unit ball $\B$ of the norm $\|\cdot\|$ an arbitrary convex \emph{polygon} rather than a square. The division of $\B$ into triangles $\P_i$ and the choice of the axes for each $\P_i$ is natural and clearly the same as in our simple model case of the maximum norm, see Figure~\ref{fig_sc_easy1}(b), so that the self-contracted polygonal line with vertices inside $\P_i$ with this choice never goes upwards in the direction of $x^2$. However,
      the argument of Step~2 of the above model proof becomes much more involved because the estimates for ``horizontal'' and ``vertical'' parts of the polygonal line are not at all that simple as in the case of the maximum norm.
  \item[(ii)] The next difficulty comes with the case of a generic norm $\|\cdot\|$ (i.e.\ with the unit ball not necessarily a polygon) in $\R^2$ (i.e.\ still $n=2$). In this case it is only possible to choose the division of the unit ball $\B$ of the norm $\|\cdot\|$ into the sets $\P_i$ such that  with some natural choice of the direction $x^2$ for each $\P_i$ the self-contracted polygonal line with vertices inside $\P_i$ might go upwards in the direction of $x^2$ but ``not too much'', see Figure~\ref{fig_sc_easy1}(c). This naturally leads to quantitative notions of the ``horizontality'' and ``verticality'' for line segments.
  \item[(iii)] However, the major difficulty comes when trying to adapt these arguments to the generic space dimension $n$. In fact, consider even the simplest case, say, of the maximum norm $\|\cdot\|$ in $\R^3$, with the unit ball $\B$ of the norm being the cube $[-1,1]^3$. The sets $\P_i$ then are (quite similarly to the case of a maximum norm in $\R^2$) the six pyramids with one of the vertices in the origin, the bases being the faces of the cube (which in a sense justifies the notation $\P_i$ for them). With the natural choice of coordinates for each of such pyramids ($x^3$ perpendicular to the base) the self-contracted polygonal line with vertices inside the pyramid can only go downwards in the direction of $x^3$, but has a lot of freedom in the other two directions.
      Hence, intuitively, one has to consider separately the ``horizontal'' parts of this polygonal line making separate estimates for their subparts belonging again to different pyramids (of course, related to the pyramid $\P_i$ originally considered), with vertices shifted away from the origin. This leads to the notion of \emph{admissible} ordered sets of pyramids (see Definition~\ref{def_sc_admpyr1}), each such set producing a natural system of not necessarily orthogonal coordinates,  and to a technically involved inductive argument for the generic space dimension.
      Very roughly speaking, in a generic space dimension $n$, in each of the sets $\P_i$ we will calculate separately the variation of the ``vertical'' part of the self-contracted vector, which is easy by monotonicity (or ``almost monotonicity'' in the case of a generic norm, when $\P_i$ is no more a pyramid) of $x^n$ coordinates of the vertices, the axis $x^n$ being determined by the set $\P_i$, and then, by induction, the variation of its ``horizontal'' part. When calculating the latter, we will arrange this part in subparts belonging to different sets $\P_j$, each one determining the respective axis $x^{n-1}$, and estimate again separately the variation of its ``vertical'' and  ``horizontal'' parts
      (now with respect to $x^{n-1}$), proceeding by backward induction.

      Note that as explained above, in principle one can avoid using induction for $n=2$; however, the general proof we provide uses induction even in this relatively simple case.
      Last but not least, it is worth mentioning that for a generic norm $\|\cdot\|$ the sets $\P_i$ are no more pyramids and may have a quite complicated structure (although this would not affect the proof).
\end{itemize}

\section{Preliminary constructions}

\subsection{Partition of a convex body}\label{sec_scontr_partconv1}
For a convex set $\D\subset \R^n$  we say that $\nu_x\in\R^n$ is a vector of external normal to $\D$ at $x\in \partial \D$,
if there is a support hyperplane $\Pi$ to $\D$ at $x$ orthogonal to $\nu_x$ and $\nu_x$ is directed towards the open half-space
bounded by $\Pi$ and not containing $\D$. Clearly, an $x\in \partial \D$ may have many external normal vectors, unless $\partial \D$ is smooth.

We will need the following construction.

\begin{proposition}\label{prop_part_convbody1}
Let $\D\subset \R^n$ be a 
convex set. Then for every $\delta>0$ there is a cover of $\partial\D$ by a finite number of sets
$\{\T_i\}_{i=1}^N$ (with some $N=N(\delta)\in \N$) with the following property:
for every $i\in \{1,\ldots, N\}$ there is a vector $\nu^i$ such that for every $x\in \T_i$ there is a vector of external normal
$\nu_x$ to $\D$ at $x$ with $\widehat{(\nu_x, \nu^i)}<\delta$.
\end{proposition}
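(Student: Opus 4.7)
The plan is to reduce the construction to a finite covering of the unit sphere $S^{n-1}$ through the (possibly multivalued) normal map. First I would invoke the supporting hyperplane theorem: at every boundary point $x\in\partial\D$ of the convex set $\D$ there exists at least one supporting hyperplane, and hence at least one unit vector of external normal $\nu_x\in S^{n-1}$ to $\D$ at $x$. (If $\D$ has empty interior, any unit vector orthogonal to $\mathrm{aff}\,\D$ works, so that case is even simpler.)

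Given $\delta>0$, by compactness of $S^{n-1}$ I would then pick finitely many unit vectors $\nu^1,\ldots,\nu^N\in S^{n-1}$ such that the open spherical caps
\[
B_i:=\{\nu\in S^{n-1}:\widehat{(\nu,\nu^i)}<\delta\},\qquad i=1,\ldots,N,
\]
cover $S^{n-1}$. I would then pull this cover back to $\partial\D$ by defining
\[
\T_i:=\{x\in\partial\D:\text{some vector of external normal to } \D \text{ at } x \text{ lies in } B_i\}.
\]

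It then remains to verify the two required properties. The family $\{\T_i\}_{i=1}^N$ covers $\partial\D$ because every boundary point admits at least one unit external normal, which in turn belongs to at least one of the caps $B_i$. The defining angular inequality $\widehat{(\nu_x,\nu^i)}<\delta$ is then automatic, since by construction for each $x\in\T_i$ at least one external normal $\nu_x$ at $x$ satisfies $\nu_x\in B_i$.

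I do not anticipate any real obstacle here: the proposition is essentially a compactness statement about the image of the normal map, and the sets $\T_i$ are required to have no regularity beyond the covering property. The only mild care needed is that the external normal at $x\in\partial\D$ need not be unique when $\partial\D$ fails to be smooth at $x$, but the definition of $\T_i$ is crafted precisely to accommodate this multivaluedness, so the argument is unaffected.
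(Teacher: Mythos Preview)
Your proposal is correct and follows essentially the same approach as the paper: cover $S^{n-1}$ by finitely many small angular caps $B_i=\{\nu:\widehat{(\nu,\nu^i)}<\delta\}$ and pull this cover back to $\partial\D$ via the (multivalued) normal map. The paper phrases the first step as applying the proposition itself to the sphere (where the external normal at $y\in S^{n-1}$ is $y$), but this amounts to exactly the same covering of $S^{n-1}$ by caps that you describe directly.
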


\begin{proof}
Given a $\delta>0$, we take a finite cover of $S^{n-1}$ as in the statement being proven, i.e.\
find $\{\tilde \T_i\}_{i=1}^N$ (with some $N=N(\delta)\in \N$), $\cup_{i=1}^N \tilde \T_i=S^{n-1}$ such that
for every $i\in \{1,\ldots, N\}$ there is a vector $\nu^i$ with the property that for every $x\in \tilde \T_i$ the unique vector of external unit normal
$\nu_x$ to $S^{n-1}$ at $x$ satisfies $\widehat{(\nu_x, \nu^i)}<\delta$.
It suffices to define now $\T_i$ to be the set of all
$x\in \partial\D$ that admit a unit vector of external normal $\nu_x$ to $\D$ coinciding with some vector of external unit normal
$\nu_y$ to $S^{n-1}$ at some $y\in \tilde \T_i$.
\end{proof}

\begin{remark}\label{rm_convbody1_disj1}
Although the sets $\T_i$ mentioned in the above Proposition~\ref{prop_part_convbody1} may be overlapping, we may easily make them disjoint by substituting $\T_i$ with $\T_i\setminus \cup_{j=1}^{i-1} \T_j$.
\end{remark}

Applying the above Proposition~\ref{prop_part_convbody1} with Remark~\ref{rm_convbody1_disj1} to the closed unit ball $\B$ of $\|\cdot\|$, given a $\delta>0$, we find an $N=N(\delta)\in \N$ and a  finite family of disjoint sets
$\{\T_i\}_{i=1}^N$ and vectors $\{\nu^i\}_{i=1}^N$ such that $\cup_i \T_i=\partial \B$ and
for every $i\in \{1,\ldots, N\}$ and every $x\in \T_i$ there is a vector of external unit normal
$\nu_x$ to $\D$ at $x$ with $\widehat{(\nu_x, \nu^i)}<\delta$. Define then $\P_i:=\cup_{t\in [0,1]} t\T_i$, and note
that now by construction one has $\P_i\cap \P_j=\{0\}$ for $i\neq j$.
We will further frequently use the following fact.

\begin{lemma}\label{lm_scontr_epsconvA1Ar}
Let 
$(A_1, \ldots, A_r)\subset \P_i+A_r$ for some $i\in \{1,\ldots, N\}$
be self-contracted with respect to the norm $\|\cdot\|$.
Then $\widehat{((A_jA_{j+1}), (\nu^i)^\bot)} > \delta$, $j\in \{1,\ldots, r-1\}$ implies
\[(A_j-A_{j+1})\cdot \nu^i >0.
\]
\end{lemma}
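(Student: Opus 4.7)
The plan is to translate so that the relevant ball is centered at the origin, use the support-hyperplane characterization of the external normal to extract a geometric inequality, and then conclude via a triangle inequality for angles on the unit sphere.

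First I would translate by $-A_r$, setting $B_k := A_k - A_r$, so that the hypothesis becomes $B_k \in \P_i$ for every $k$ and in particular $B_r = 0$. Applying the self-contracted property to the triple of indices $j \leq j+1 \leq r$ yields $\|B_{j+1}\| \leq \|B_j\|$, so $B_{j+1}$ lies in the closed ball $\|B_j\|\,\B$ centered at the origin, on whose boundary $B_j$ sits. By the construction of $\P_i = \bigcup_{t\in[0,1]} t\T_i$ from Section~\ref{sec_scontr_partconv1}, one can write $B_j = \|B_j\|\,x_j$ with $x_j \in \T_i$, and the definition of $\T_i$ supplies a unit external normal $\nu_{x_j}$ to $\B$ at $x_j$ satisfying $\widehat{(\nu_{x_j}, \nu^i)} < \delta$. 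By homogeneity $\nu_{x_j}$ is also an external normal to $\|B_j\|\,\B$ at $B_j$, so the support-hyperplane inequality for this ball applied at the point $B_{j+1}$ gives
\[
(A_j - A_{j+1}) \cdot \nu_{x_j} \;=\; (B_j - B_{j+1}) \cdot \nu_{x_j} \;\geq\; 0.
\]

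Setting $v := A_j - A_{j+1}$, I would then conclude by contradiction. If $v \cdot \nu^i \leq 0$, then $\widehat{(v,\nu^i)} \geq \pi/2$; combined with the identity $\widehat{((A_jA_{j+1}), (\nu^i)^\bot)} = |\pi/2 - \widehat{(v,\nu^i)}|$ and the hypothesis, this forces $\widehat{(v,\nu^i)} > \pi/2 + \delta$. The triangle inequality for angles between unit vectors then gives
\[
\widehat{(v, \nu_{x_j})} \;\geq\; \widehat{(v,\nu^i)} - \widehat{(\nu^i, \nu_{x_j})} \;>\; \pi/2 + \delta - \delta \;=\; \pi/2,
\]
so $v\cdot \nu_{x_j} < 0$, contradicting the displayed inequality above.

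The only step demanding genuine care is the unpacking of the angle hypothesis into the one-sided inequality $\widehat{(v,\nu^i)} > \pi/2 + \delta$ under the sign assumption $v \cdot \nu^i \leq 0$; everything else is a direct consequence of the convexity of $\B$ and of the choice of partition made in Section~\ref{sec_scontr_partconv1}.
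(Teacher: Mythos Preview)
Your proof is correct and follows essentially the same approach as the paper: the paper factors the argument into an application of a separate geometric lemma (Lemma~\ref{lm_scontr_epsconv1}), but the content---scaling so that $A_j$ lies on the boundary of a ball containing $A_{j+1}$, invoking the support-hyperplane inequality at the external normal $\nu_{x_j}$, and then closing via the triangle inequality for angles using $\widehat{(\nu_{x_j},\nu^i)}<\delta$---is identical to what you wrote.
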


\begin{proof}
The result follows from Lemma~\ref{lm_scontr_epsconv1} below applied with
$\D:= A_r +\lambda \B$, $\T:=A_r+\lambda \T_i$, where $\lambda:=\|A_j-A_r\|$, $A_j$ in place of $A_1$ and $A_{j+1}$ in place of
$A_2$, once we observe that $A_{j+1}\in \D$ by self-contracted requirement on $(A_1, \ldots, A_r)$.
\end{proof}

The following immediate geometric fact has been used in the above proof.

\begin{lemma}\label{lm_scontr_epsconv1}
Let $\D\subset \R^n$ be a convex set, $\delta>0$, $\T\subset \partial\D$, $\nu\in \R^n$ be such that for every $x\in \T$ there is a vector of external normal
$\nu_x$ to $\D$ at $x$ with $\widehat{(\nu_x, \nu)}<\delta$. 
If $A_1\in \T$, $A_2\in \D$ and $\widehat{((A_1A_2), \nu^\bot)} > \delta$, then $(A_1-A_2)\cdot \nu >0$.
\end{lemma}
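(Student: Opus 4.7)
\medskip

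\noindent\textbf{Proof plan.} The plan is to translate the two hypotheses into a pair of inequalities on angles and then combine them by the spherical triangle inequality. First I would pick a vector of external normal $\nu_{A_1}$ at $A_1$ with $\widehat{(\nu_{A_1},\nu)}<\delta$, which exists because $A_1\in\T$. Since $\D$ is convex and $A_2\in\D$, the support hyperplane to $\D$ at $A_1$ orthogonal to $\nu_{A_1}$ separates $A_2$ from the open half-space into which $\nu_{A_1}$ points, giving
\[
(A_2-A_1)\cdot\nu_{A_1}\leq 0,\qquad\text{equivalently}\qquad (A_1-A_2)\cdot\nu_{A_1}\geq 0.
\]
Setting $v:=A_1-A_2$, $\phi:=\widehat{(v,\nu_{A_1})}$ and $\theta:=\widehat{(v,\nu)}$, this means $\phi\leq\pi/2$.

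Next I would rewrite the second hypothesis. By the very definition of the angle between the line $(A_1A_2)$ and the hyperplane $\nu^\bot$, one has
\[
\widehat{((A_1A_2),\nu^\bot)}=\frac{\pi}{2}-\min\{\theta,\pi-\theta\},
\]
so the assumption $\widehat{((A_1A_2),\nu^\bot)}>\delta$ is exactly
\[
\min\{\theta,\pi-\theta\}<\frac{\pi}{2}-\delta,\qquad\text{i.e.}\qquad \theta\in\Bigl[0,\tfrac{\pi}{2}-\delta\Bigr)\cup\Bigl(\tfrac{\pi}{2}+\delta,\pi\Bigr].
\]

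Finally I would rule out the second alternative using the triangle inequality on the unit sphere applied to the three unit vectors $v/|v|$, $\nu_{A_1}/|\nu_{A_1}|$ and $\nu/|\nu|$, namely
\[
\theta=\widehat{(v,\nu)}\leq\widehat{(v,\nu_{A_1})}+\widehat{(\nu_{A_1},\nu)}<\frac{\pi}{2}+\delta,
\]
since $\phi\leq\pi/2$ and $\widehat{(\nu_{A_1},\nu)}<\delta$. Combined with the dichotomy from the previous step this forces $\theta<\pi/2-\delta<\pi/2$, hence $v\cdot\nu>0$, which is precisely the desired conclusion $(A_1-A_2)\cdot\nu>0$.

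There is no serious obstacle here: the only point worth being careful about is the precise conversion between the ``line versus subspace'' angle (which lives in $[0,\pi/2]$) and the ``vector versus vector'' angle (which lives in $[0,\pi]$), so that the two $\delta$-gaps line up correctly; once this is done, the conclusion follows mechanically from the spherical triangle inequality.
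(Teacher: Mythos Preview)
Your proof is correct and follows essentially the same approach as the paper's: both pick an external normal $\nu_{A_1}$ at $A_1$, use the support hyperplane inequality, convert the line--hyperplane angle hypothesis into a dichotomy on $\widehat{(A_1-A_2,\nu)}$, and eliminate the bad alternative via the triangle inequality for angles. The only cosmetic difference is that the paper phrases the argument as a contradiction (assuming $(A_2-A_1)\cdot\nu\geq 0$ and showing $A_2\notin\D$), whereas you argue directly from $A_2\in\D$; the two are contrapositives of each other.
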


\begin{proof}
If $B\in \R^n$ satisfying $\widehat{((A_1 B), \nu^\bot)} > \delta$ is such that $(B-A_1)\cdot \nu \geq 0$, then
$\widehat{(B-A_1, \nu)} < \pi/2-\delta$, so that
\[
\widehat{(B-A_1, \nu_{A_1})} \leq  \widehat{(B-A_1, \nu)} + \widehat{(\nu, \nu_{A_1})}<
\frac \pi 2 - \delta + \delta =\frac \pi 2.
\]
Hence
$B$ belongs to the open
half-space bounded by the hyperplane $A_1+\nu_{A_1}^\bot$ and not containing $\D$ (see Figure~\ref{fig_epsconv1}).
Thus, it is impossible that $(A_2-A_1)\cdot \nu \geq 0$ because $A_2\in D$, which concludes the proof.
\end{proof}

\begin{figure}
\definecolor{qqwuqq}{rgb}{0.,0.39215686274509803,0.}
\definecolor{xdxdff}{rgb}{0.49019607843137253,0.49019607843137253,1.}
\begin{tikzpicture}[line cap=round,line join=round,>=triangle 45,x=1.0cm,y=1.0cm]
\clip(-5.92,-1.74) rectangle (4.04,4.76);
\draw [rotate around={1.48305457747507:(-0.61,0.52)},
color=lightgray,fill=lightgray,
fill opacity=0.3] (-0.61,0.52) ellipse (3.4623380389565095cm and 1.5598989377537318cm);
\draw [shift={(-0.7965625037975763,2.0741983593184274)},
color=qqwuqq,fill=qqwuqq,
fill opacity=0.1] (0,0) -- (62.57446143194419:0.6) arc (62.57446143194419:92.57446143194416:0.6) -- cycle;
\draw [shift={(-0.7965625037975763,2.0741983593184274)},color=qqwuqq,fill=qqwuqq,fill opacity=0.1] (0,0) -- (152.21893859424912:0.6) arc (152.21893859424912:182.2189385942491:0.6) -- cycle;
\draw [shift={(-0.7789060564479413,2.0749922477863127)},color=qqwuqq,fill=qqwuqq,fill opacity=0.1] (0,0) -- (-27.634057793212346:0.6) arc (-27.634057793212346:2.5744614319441705:0.6) -- cycle;
\draw [->] (-0.7965625037975763,2.0741983593184274) -- (0.1,3.78);
\draw [->] (-0.7965625037975763,2.0741983593184274) -- (-0.8829434449557151,3.9953505126340874);
\draw (-3.863086539587659,3.6896979191036885)-- (1.58,0.84);
\draw (-0.7965625037975763,2.0741983593184274)-- (-1.9,3.64);
\draw [shift={(-0.4543906656477419,-4.858479001211346)},line width=2.pt]  plot[domain=1.2921454832096615:1.947295684264658,variable=\t]({1.*6.941116405307211*cos(\t r)+0.*6.941116405307211*sin(\t r)},{0.*6.941116405307211*cos(\t r)+1.*6.941116405307211*sin(\t r)});
\begin{scriptsize}
\draw (2.,2.9) node[anchor=north west] {$A_1+\nu^\perp_{A_1}$};
\draw (-5.60171506509224,1.8581438422742096)-- (3.8389826290210443,2.282626801939603);
\draw[color=black] (-1.28,0.36) node {$\D$};
\draw [fill=xdxdff] (-0.7965625037975763,2.0741983593184274) circle (1.5pt);
\draw[color=black] (-0.96,1.85) node {$A_1$};
\draw[color=black] (-0.10,3.14) node {$\nu$};
\draw[color=black] (-0.6,2.89) node {$<\! \delta$};
\draw[color=black] (-1.64,2.26) node {$<\! \delta$};
\draw[color=black] (-3.42,3.92) node {$A_1+\nu^{\perp}$};
\draw [fill=xdxdff] (-1.9,3.64) circle (1.5pt);
\draw[color=black] (-1.76,3.92) node {$B$};
\draw[color=black] (-1.09,3.22) node {$\nu_{A_1}$};
\draw[color=black] (0.58,1.82) node {$\T$};
\end{scriptsize}
\end{tikzpicture}
\label{fig_epsconv1}
\caption{Construction of the proof of Lemma~\ref{lm_scontr_epsconv1}.}
\end{figure}
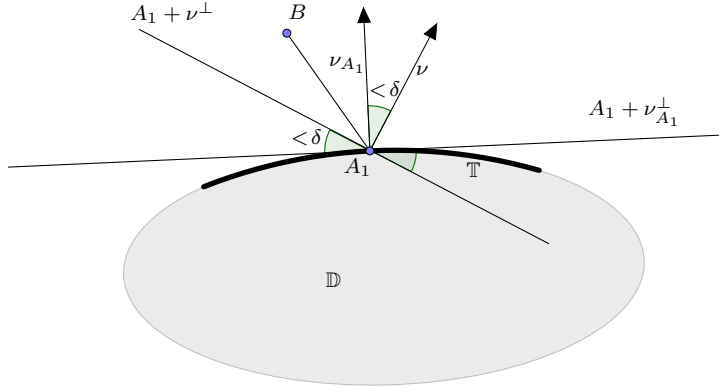

For a linear subspace $\Pi\subset \R^n$ of arbitrary dimension, 
and an $\varepsilon\geq 0$ we define
\[
V_\varepsilon(\Pi):=\{z\in \R^n\colon \widehat{((0,z),\Pi)}\leq \varepsilon\}\cup\{0\},
\]
We also need the following statement.

\begin{lemma}\label{lm_scontr_defxi1}
Let $\D\subset \R^n$ be a bounded convex set with nonempty interior $\mathrm{int}\,\D$, and $0\in \mathrm{int}\,\D$. Then there is an $\varepsilon_0>0$ and
a $\bar\xi\in (0, \pi/2)$ (depending 
only on $\D$) such that for every linear subspace $\Pi\subset \R^n$ of arbitrary dimension, 
and every $x\in V_{\varepsilon_0}(\Pi)\cap \partial\D$ one has that every external normal $\nu_x$ to $\D$ at $x$ satisfies
$\widehat{(\nu_x,\Pi)} < \bar\xi$.
\end{lemma}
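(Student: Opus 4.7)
The plan is to exploit that $0\in\mathrm{int}\,\D$ and $\D$ is bounded by fixing constants $0<r\leq R<\infty$ with $B(0,r)\subset\D\subset B(0,R)$ (both depending only on $\D$), and then to read off $\varepsilon_0$ and $\bar\xi$ directly from $r,R$ via a single separating-hyperplane inequality. The key quantitative fact behind the lemma is: if the external normal $\nu_x$ were almost perpendicular to $\Pi$, then the support hyperplane at $x$ would cut off a $\Pi$-neighbourhood of the origin, contradicting $B(0,r)\subset\D$.

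I would implement this in three short steps. \emph{First}, fix $x\in\partial\D\cap V_{\varepsilon_0}(\Pi)$ (automatically $x\neq 0$ since $0\in\mathrm{int}\,\D$) and any external normal $\nu_x$, and rescale so that $|\nu_x|=1$. The defining support inequality $\D\subset\{y\colon(y-x)\cdot\nu_x\leq 0\}$ applied to $y=r\nu_x\in B(0,r)\subset\D$ yields the crucial lower bound $x\cdot\nu_x\geq r$. \emph{Second}, decompose orthogonally $x=x_\Pi+x_{\Pi^\bot}$ and $\nu_x=p+q$ with $p\in\Pi$ and $q\in\Pi^\bot$. The hypothesis $\widehat{((0,x),\Pi)}\leq\varepsilon_0$ translates to $|x_{\Pi^\bot}|\leq|x|\sin\varepsilon_0\leq R\sin\varepsilon_0$, while $|x_\Pi|\leq R$; Cauchy--Schwarz together with $|q|\leq |\nu_x|=1$ then gives
\[
r \;\leq\; x\cdot\nu_x \;=\; x_\Pi\cdot p+x_{\Pi^\bot}\cdot q \;\leq\; R|p|+R\sin\varepsilon_0.
\]
\emph{Third}, it then suffices to choose $\varepsilon_0\in(0,\pi/2)$ with $\sin\varepsilon_0<r/(2R)$, which forces $|p|>r/(2R)$, hence $\cos\widehat{(\nu_x,\Pi)}=|p|>r/(2R)$, so that $\bar\xi:=\arccos(r/(2R))\in(0,\pi/2)$ works uniformly in $\Pi$, $x$, and $\nu_x$.

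I do not foresee a substantive obstacle: the proof is essentially a one-shot support-hyperplane computation, and the resulting $\varepsilon_0,\bar\xi$ depend only on $r,R$ and therefore only on $\D$, exactly as required. The few points requiring care are to rescale $\nu_x$ to unit length before substituting $y=r\nu_x$ (external normals being defined only up to positive scaling in the paper), and to note that the case $x=0$ is automatically ruled out by $0\in\mathrm{int}\,\D$, so that the angle $\widehat{((0,x),\Pi)}$ appearing in the definition of $V_{\varepsilon_0}(\Pi)$ is well-defined.
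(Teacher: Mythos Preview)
Your proof is correct and in fact gives more than the paper does: you obtain explicit values $\varepsilon_0=\arcsin\bigl(r/(2R)\bigr)$ and $\bar\xi=\arccos\bigl(r/(2R)\bigr)$ in terms of the inner and outer radii $r,R$ of $\D$. The paper's own proof proceeds instead by contradiction and compactness: assuming no such $\varepsilon_0,\bar\xi$ exist, it extracts sequences $\Pi_k\to\Pi$, $x_k\to x\in\partial\D$, $\nu_{x_k}\to\nu$ with $\widehat{(\nu_{x_k},\Pi_k)}\to\pi/2$, passes to the limit to get a support hyperplane $x+\nu^\bot$ to $\D$ at $x$ with $\Pi\subset\nu^\bot$, and then concludes $0\in x+\nu^\bot\subset\partial\D$, contradicting $0\in\mathrm{int}\,\D$. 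Your route is more elementary (no limits, no subsequences) and yields effective constants; the paper's route is somewhat shorter to write but non-constructive. Both are perfectly valid, and your quantitative version is arguably preferable since later in the paper the constants $\varepsilon_0$ and $\bar\xi$ enter into the definition of $\delta_0$ and the final length bound.
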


\begin{proof}
If the claim is false, then there is a sequence of linear subspaces $\{\Pi_k\}$,
and of points $\{x_k\}\subset\partial\D$ such that $\lim_k\widehat{(\nu_{x_k},\Pi_k)} =\pi/2$.
Up to passing to subsequences (not relabeled), we may assume that $\Pi_k\to \Pi$ in the sense of Hausdorff (for some linear subspace $\Pi\subset \R^n$),
$x_k\to x\in \partial\D$, $\nu_{x_k}\to \nu$ (for some $\nu\in \R^n$) and support hyperplanes
$x_k + (\nu_{x_k})^\bot$
to $\D$ at $x_k$ orthogonal to $\nu_{x_k}$ converge to $x+\nu^\bot$ which is necessarily a support hyperplane to $\D$ at $x$
(as limit of support hyperplanes) as $k\to \infty$. Since then  $\widehat{(\nu,\Pi)} =\pi/2$, one has $\Pi\subset \nu^\bot$, and hence
$0\in \nu^\bot$, which means $0\in \partial\D$, contradicting the assumption $0\in \mathrm{int}\,\D$.
\end{proof}

We apply the above Lemma~\ref{lm_scontr_defxi1} to $\D:=\B$ (the closed unit ball 
of $\|\cdot\|$), and find an $\varepsilon_0>0$ and a $\bar \xi$ depending 
only on $\B$ (hence on $\|\cdot\|$) such that for every linear subspace $\Pi\subset \R^n$ of arbitrary dimension
and every $x\in V_{\varepsilon_0}(\Pi)\cap \partial\D$ one has that every external normal $\nu_x$ to $\B$ at $x$ satisfies
$\widehat{(\nu_x,\Pi)} <\bar \xi< \pi/2$. Hence denoting
\[
\mathbb{A}_\varepsilon (\Pi):=\{i\in \{1,\ldots, N\}\colon \T_i\cap V_\varepsilon(\Pi)\neq\emptyset\},
\]
we have that $\widehat{(\nu^i,\Pi)} <\bar\xi+\delta$ for all $i\in \mathbb{A}_{\varepsilon_0} (\Pi)$.
Thus there is a $\bar\delta >0$ depending only on $\bar\xi$ (hence only 
on $\|\cdot\|$)
such that  $\widehat{(\nu^i,\Pi)} <\xi$ for all $i\in \mathbb{A}_{\varepsilon_0} (\Pi)$
and for some $\xi\in (0,\pi/2)$, which still depends only on 
$\|\cdot\|$, when $\delta <\bar\delta$
(one may take, say, $\xi := \bar\xi/2 +\pi/4$ and $\bar\delta := \pi/4 - \bar\xi/2$). The following notion will be at the heart of our inductive construction.

\begin{definition}\label{def_sc_admpyr1}
We will call the (ordered) $(n-i+1)$-tuple of sets $(\P_{\alpha_i},\ldots, \P_{\alpha_n})$, with $\alpha_j\in \{1,\ldots, N\}$, $j=i, \ldots, n$, $i\in \{1,\ldots, n\}$
\emph{admissible}, if for all $j=i+1, \ldots, n$ one has
\[
\alpha_{j-1}\in \mathbb{A}_{\varepsilon_0} (\Pi^{j-1}),\quad\mbox{where } \Pi^{j-1}:=(\mathrm{span}\,\{\nu^{\alpha_k}\}_{k=j}^n)^\bot,
\]
or, equivalently, $\T_{\alpha_{j-1}}\cap V_{\varepsilon_0}(\Pi^{j-1})\neq \emptyset$, that is,
\begin{equation}\label{eq_scontr_admpyram1}
    \widehat{(\nu^{\alpha_{j-1}}, \Pi^{j-1})} < \xi\quad\mbox{for all $j=i+1, \ldots, n$}.
\end{equation}
In case $i=n$ the ordered $(n-i+1)$-tuple $(\P_{\alpha_i},\ldots, \P_{\alpha_n})$ reduces to a singleton which is by definition always considered admissible.
\end{definition}

Each admissible ordered $n$-tuple of sets $(\P_{\alpha_1},\ldots, \P_{\alpha_n})$ will determine in a natural way a (not necessarily orthogonal) coordinate system (different for different admissible $n$-tuples): in fact, the
axes
$x^j$, $j=2, \ldots, n$, will be directed along vectors $\nu^{\alpha_j}$ determined by the sets
$\P_{\alpha_j}$, while
the axis $x^1$ will be chosen orthogonal to all $x^j$ with $j=2, \ldots, n$.
The idea of the proof of Theorem~\ref{th_scontr_estpolygonal1} is then as follows: the whole vector $(A_1,\ldots, A_r)$ will be appropriately arranged in subvectors naturally corresponding
to some admissible ordered $n$-tuples $(\P_{\alpha_1},\ldots, \P_{\alpha_n})$
so that the total variation of each of subvectors could be evaluated in a coordinate system determined by the respective
$n$-tuple, and the total variation of the whole vector would then be estimated by the sum of variations of the chosen subvectors.

Finally, we will need the following simple geometric lemma.

\begin{lemma}\label{lm_scontr_estproj1}
Let $\Pi\subset\R^n$ be a $k$-dimensional linear subspace,
 $\{\nu_i\}_{i=1}^k\subset\Pi$ such that
\begin{equation}\label{eq_sc_angxi1}
\widehat{\left(x^i,\left(\mathrm{span}\{\nu_j\}_{j=i+1}^k\right)^\bot\right)}\leq \zeta, \quad i=1, \ldots, k-1
\end{equation}
for some $\zeta\in (0, \pi/2)$,
where $x^i:=\mathrm{span}\{\nu_i\}$, $i=1,\ldots, k$.
Then there is a constant $C=C_k(\zeta)>0$ 
such that for every $A\in \R^n$ one has
\[
|p_\Pi(A)|\leq C\sum_{i=1}^k |p_{x^i}(A)|.
\]
\end{lemma}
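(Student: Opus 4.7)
The plan is to proceed by induction on $k$, with the natural base case $k=1$ being immediate: since $\Pi = x^1$, one has $|p_\Pi(A)| = |p_{x^1}(A)|$ and $C_1(\zeta) := 1$ suffices.

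For the inductive step, set $\Pi' := \mathrm{span}\{\nu_j\}_{j=2}^k$. First I would verify that the inductive hypothesis applies to $\Pi'$ with the vectors $\nu_2, \ldots, \nu_k$ and the same $\zeta$: the hypothesis~\eqref{eq_sc_angxi1} for $i=2,\ldots,k-1$ is literally the same statement in the two settings (the orthogonal complements are taken in $\R^n$ in both), so no loss of constants occurs. This yields $C_{k-1}(\zeta) > 0$ with $|p_{\Pi'}(B)| \leq C_{k-1}(\zeta) \sum_{i=2}^k |p_{x^i}(B)|$ for every $B \in \R^n$; the linear independence of $\nu_2,\ldots,\nu_k$ (so that $\dim \Pi' = k-1$) follows inductively from the angle bound $\zeta < \pi/2$.

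Next, write $v := p_\Pi(A)$ and decompose it along $\Pi'$ and its orthogonal complement \emph{within} $\Pi$. A dimension count gives $\dim(\Pi \cap (\Pi')^\bot) = 1$, say spanned by a unit vector $u$; write $v = \alpha u + v_2$ with $v_2 := p_{\Pi'}(v) \in \Pi'$, so that $|v|^2 = \alpha^2 + |v_2|^2$ and hence $|v| \leq |\alpha| + |v_2|$. Since $\Pi' \subset \Pi$, one has $v_2 = p_{\Pi'}(A)$, and since $u \perp \Pi' \supset x^i$ for $i \geq 2$, one has $p_{x^i}(v) = p_{x^i}(A)$; the inductive hypothesis therefore bounds $|v_2|$ by $C_{k-1}(\zeta)\sum_{i=2}^k |p_{x^i}(A)|$.

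To control $|\alpha|$, I would use the angle condition for $i=1$. Decompose $\nu_1 = au + b$ with $b \in \Pi'$; since $u \in (\Pi')^\bot$ and $b \in \Pi'$, one gets $p_{(\Pi')^\bot}(\nu_1) = au$, and the hypothesis $\widehat{(x^1, (\Pi')^\bot)} \leq \zeta$ reads $|a|/|\nu_1| \geq \cos \zeta$. Setting $e_1 := \nu_1/|\nu_1|$ gives $|u \cdot e_1| = |a|/|\nu_1| \geq \cos \zeta$. Then from $v \cdot e_1 = \alpha(u \cdot e_1) + v_2 \cdot e_1$ and $|v_2 \cdot e_1| \leq |v_2|$, I obtain
\[
|\alpha| \leq \frac{|p_{x^1}(A)| + |v_2|}{\cos \zeta}.
\]
Combining the two estimates yields $|v| \leq C_k(\zeta) \sum_{i=1}^k |p_{x^i}(A)|$ with an explicit recursion such as $C_k(\zeta) := (1 + 1/\cos\zeta)(1 + C_{k-1}(\zeta))$, closing the induction.

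The only real subtlety is the first step — convincing oneself that no angle condition is lost when restricting attention to $\Pi'$, because one is careful to keep the orthogonal complements in~\eqref{eq_sc_angxi1} inside the ambient $\R^n$. Beyond that, everything reduces to the decomposition $v = \alpha u + v_2$ and the quantitative bound $|u \cdot e_1| \geq \cos \zeta$ furnished by the angle hypothesis for $i=1$; the rest is routine.
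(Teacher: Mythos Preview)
Your proof is correct and follows essentially the same route as the paper: induction on $k$, with the inductive step carried out by orthogonally decomposing $p_\Pi(A)$ along $\Pi':=\mathrm{span}\{\nu_j\}_{j=2}^k$ and its one-dimensional orthogonal complement inside $\Pi$, then using the angle condition for $i=1$ to control the transversal component via $|p_{x^1}(A)|$. The paper phrases the same decomposition as $\Pi_m:=(\Pi')^\bot\cap\Pi$ and $\Pi_m^\bot:=\Pi'$ and obtains the slightly different recursion $C_{m+1}(\zeta)=(1+\tan\zeta+1/\cos\zeta)(C_m(\zeta)\vee 1)$, but the mechanism is identical.
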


\begin{proof}
Note first that according to the condition~\eqref{eq_sc_angxi1} on $\{\nu_j\}$, these vectors are linearly independent and hence
$\Pi=\mathrm{span}\,  \{\nu_i\}_{i=1}^k$.
One further notes that it is enough to prove the statement for $A\in \Pi$, which in this case reduces to
\begin{equation}\label{eq_scontr_estproj1a}
|A|\leq C_k(\zeta)\sum_{i=1}^k |p_{x^i}(A)|.
\end{equation}
In fact then for an arbitrary $A\in \R^n$ one has
the estimate
\[
|p_\Pi(A)|\leq C_k(\zeta) \sum_{i=1}^k |p_{x^i}(p_\Pi(A))|\leq C_k(\zeta) \sum_{i=1}^k |p_{x^i}(A)|,
\]
the latter inequality being valid because $
|p_{x^i}(p_\Pi(A))|= |p_{x^i}(A)|$.

We therefore prove~\eqref{eq_scontr_estproj1a} for $A\in \Pi$. For this purpose we use the (finite) induction on $k$. The statement is trivial for $k=1$ (with $C_1(\zeta):=1$). Suppose it is true for $k=m$. To prove it for $k=m+1$, for an arbitrary linear subspace
$\Pi\subset \R^n$ of dimension $m+1$ we denote
\begin{align*}
\Pi_m &:=\left(\mathrm{span}\,  \{\nu_j\}_{j=2}^{m+1}\right)^\bot\cap \Pi,\\
\Pi_m^\bot &:=\mathrm{span}\,  \{\nu_j\}_{j=2}^{m+1}.
\end{align*}
Note that for $A\in \Pi$ one has
\begin{equation}\label{eq_scontrPx1pm1}
\begin{aligned}
    p_{x_1}(A) & = p_{x_1}(p_{\Pi_m}(A) + p_{\Pi_m^\bot}(A)) = p_{x_1}(p_{\Pi_m}(A)) + p_{x_1}(p_{\Pi_m^\bot}(A))\\
    & =p_{\Pi_m}(A)\cos\widehat{(x^1, \Pi_m)} + p_{\Pi_m^\bot}(A)\sin\widehat{(x^1, \Pi_m)},
\end{aligned}
\end{equation}
and thus
\begin{align*}
    |A|& \leq |p_{\Pi_m}(A)| + |p_{\Pi_m^\bot}(A)| \\
& = \frac{1}{\cos\widehat{(x^1, \Pi_m)}}\left(|p_{x^1}(A)|- |p_{\Pi_m^\bot}(A)|\sin\widehat{(x^1, \Pi_m)} \right)+ |p_{\Pi_m^\bot}(A)|\quad\mbox{by~\eqref{eq_scontrPx1pm1}}\\
&\leq \left(1+\tan \widehat{(x^1, \Pi_m)}+ \frac{1}{\cos\widehat{(x^1, \Pi_m)}}\right)\left(|p_{x^1}(A)| + |p_{\Pi_m^\bot}(A)| \right)\\
&\leq \left(1+\tan \zeta+ \frac{1}{\cos\zeta}\right)\left(|p_{x^1}(A)| + |p_{\Pi_m^\bot}(A)| \right) \quad\mbox{because $\widehat{(x^1, \Pi_m)}\leq\zeta$}\\
& \leq \left(1+\tan \zeta+ \frac{1}{\cos\zeta}\right)\left(|p_{x^1}(A)| + C_m(\zeta)\sum_{i=2}^{k+1} |p_{x^i}(A)|\right)
\quad\mbox{by inductive assumption},\\
\end{align*}
which implies the claim with $C_{m+1}(\zeta):= \left(1+\tan \zeta+ \frac{1}{\cos\zeta}\right) (C_m(\zeta)\vee 1)$.
\end{proof}

From now on we denote
\begin{equation}\label{eq_scontr_estproj1b}
    C(\zeta):=\max_{k\in \{1,\ldots, n\}} C_k(\zeta),
\end{equation}
where $C_k(\zeta)>0$ is defined by Lemma~\ref{lm_scontr_estproj1} (in fact, we may clearly always take
$C_k(\zeta)$ nondecreasing with $k$, in which case
$C(\zeta)=C_n(\zeta)$). Clearly, for a fixed $\zeta$ the constant $C(\zeta)$ depends only on the space dimension $n$.

\subsection{Preliminary lemmata on self-contracted polygonal lines}

In the sequel we will extensively use without any further reference the following immediate observations.

\begin{remark}\label{rm_sc1subvec1}
If $(A_1,\ldots,A_r)$ is self-contracted, 
then so is any its subvector
$(A_{j_1},\ldots,A_{j_k})$.
\end{remark}

\begin{remark}\label{rm_sc1subvec2}
If $(A_1,\ldots,A_r)$ is self-contracted, 
then
\[
|A_jA_m|\leq C|A_1A_r|
\]
for all $(j,m)\subset \{1,\ldots, r\}$ and for some $C>0$ depending only on  $\|\cdot\|$.
In fact, the self-contracted property of $(A_1,\ldots,A_r)$ implies $\{A_j, A_m\}\subset A_r +\lambda \B$, where $\B$ is the closed unit ball of $\|\cdot\|$ and $\lambda = |A_1A_r|$, so that $|A_jA_m|\leq \lambda\diam \B$, hence one may take $C:=\diam \B$.
\end{remark}

We will also need the following lemmata.

\begin{lemma}\label{lm_scontr_horiz1}
There is a constant $\varepsilon_1\in (0,\pi/2)$ depending only on the norm $\|\cdot\|$ such that
when the vector $(A_1,A_2,A_3)$  is self-contracted with respect to
the norm $\|\cdot\|$,
 $\angle A_1 A_2 A_3 \leq 2\varepsilon_1$,
then $|A_2A_3|\leq 3/4 |A_1 A_2|$.
\end{lemma}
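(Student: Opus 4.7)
The plan is a direct computation: translate $A_2$ to the origin (and rename $u:=A_1$, $v:=A_3$), note that the self-contracted hypothesis becomes $\|v\|\le\|v-u\|$, and use the triangle inequality for $\|\cdot\|$ together with the standard norm equivalence $c_1|x|\le\|x\|\le c_2|x|$ on $\R^n$, where $c_1,c_2>0$ depend only on $\|\cdot\|$. By homogeneity I may normalize $|u|=1$; the goal is then $|v|\le 3/4$.

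The heart of the argument is the degenerate collinear limit $\alpha:=\angle A_1A_2A_3=0$, in which $v=su$ for some $s>0$ (the case $v=0$ being trivial), and the hypothesis $s\|u\|\le|s-1|\|u\|$ forces $s\le 1/2$. To perturb this for $\alpha>0$, I would set $\hat v:=v/|v|$ and pick a Euclidean unit vector $w\perp\hat v$ with $u=(\cos\alpha)\hat v+(\sin\alpha)w$. Then
\[
v-u=(|v|-\cos\alpha)\hat v-(\sin\alpha)\,w,
\]
and the triangle inequality yields $\|v-u\|\le\bigl||v|-\cos\alpha\bigr|\,\|\hat v\|+\sin\alpha\,\|w\|$, while $\|v\|=|v|\,\|\hat v\|$. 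Inserting into the self-contracted inequality produces
\[
|v|\,\|\hat v\|\ \le\ \bigl||v|-\cos\alpha\bigr|\,\|\hat v\|+\sin\alpha\,\|w\|.
\]

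I would then split into two cases. If $|v|\ge\cos\alpha$, this simplifies to $\cos\alpha\,\|\hat v\|\le\sin\alpha\,\|w\|$, forcing $\tan\alpha\ge\|\hat v\|/\|w\|\ge c_1/c_2$, which is impossible as soon as $\tan(2\varepsilon_1)<c_1/c_2$. If $|v|<\cos\alpha$, the inequality rearranges to $2|v|\|\hat v\|\le\cos\alpha\,\|\hat v\|+\sin\alpha\,\|w\|$, whence
\[
|v|\ \le\ \tfrac{\cos\alpha}{2}+\tfrac{\sin\alpha}{2}\cdot\tfrac{\|w\|}{\|\hat v\|}\ \le\ \tfrac12+\tfrac{c_2}{2c_1}\sin\alpha.
\]
Choosing $\varepsilon_1\in(0,\pi/2)$ small enough that both $\sin(2\varepsilon_1)\le c_1/(2c_2)$ and $\tan(2\varepsilon_1)<c_1/c_2$ (both conditions depend only on $\|\cdot\|$), the first case is excluded and the second delivers $|v|\le 1/2+1/4=3/4$, as desired.

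The argument is essentially bookkeeping, and I do not expect a genuine obstacle. Everything is driven by the strict inequality $|v|\le 1/2<3/4$ in the collinear limit, which leaves a full quarter of slack to absorb the $O(\sin\alpha)$ perturbation, so even a purely qualitative compactness/continuity argument would suffice; the only reason to prefer the explicit calculation above is that it produces an honest formula for $\varepsilon_1$ in terms of the norm-equivalence constants $c_1,c_2$ (and, since $c_1\le c_2$, automatically $\varepsilon_1\le\pi/12$, well inside the required range $(0,\pi/2)$).
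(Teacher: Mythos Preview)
Your argument is correct: the self-contracted inequality $\|v\|\le\|v-u\|$ together with the orthogonal decomposition $u=(\cos\alpha)\hat v+(\sin\alpha)w$ and the norm-equivalence constants $c_1,c_2$ indeed yields $|v|\le\tfrac12+\tfrac{c_2}{2c_1}\sin\alpha$, with the large-$|v|$ case ruled out by $\tan\alpha<c_1/c_2$. The computation is clean and the case split is exhaustive.

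Your route is genuinely different from the paper's. The paper argues geometrically via the \emph{mediatrix} $M(A_1,A_2)=\{z:\|A_1-z\|=\|A_2-z\|\}$: since $A_3$ lies on the $A_2$-side of this set, one shows that a point $C$ far along the ray $[A_2A_3)$ lies on the $A_1$-side (this uses a separate compactness lemma on the geometry of $\partial\B$), so the segment $[A_3C]$ crosses $M(A_1,A_2)$ at some $C'$; a further compactness lemma on mediatrices then gives $|A_2C'|\le\tfrac34|A_1A_2|$ when the angle at $A_2$ is small. Your approach is considerably more elementary---no auxiliary lemmas, no compactness, and it produces an explicit $\varepsilon_1$ in terms of $c_1/c_2$. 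The paper's route, on the other hand, keeps the argument entirely in terms of the norm geometry (balls and mediatrices) rather than reducing to the Euclidean structure via $c_1,c_2$; this makes it slightly easier to track what changes when the norm is merely asymmetric (the mediatrix still makes sense, only the intersection point with $(A_1A_2)$ moves off-center), which the paper exploits in a later remark. For the lemma as stated, though, your direct computation is both shorter and sharper.
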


\begin{proof}
Let $\B$ as usual stand for the closed unit ball of the norm $\|\cdot\|$.
By Lemma~\ref{lm_scontr_defxi1} the minimum over $P\in \partial \B$ angle between $(OP)$ and any support hyperplane to $\B$
at $P\in \partial \B$, is at least $\pi/2-\bar\xi >0$.
For an arbitrary $\alpha\in (0,\pi/2-\bar\xi)$ denote
\[
\Delta(\alpha):=\inf\{|PQ|\colon \{P, Q\}\subset \partial \B, P\neq Q, \angle OPQ\leq \alpha\},
\]
see Figure~\ref{fig_scontr_horiz1b}(A).
We have that $\Delta(\alpha)>0$. In fact, otherwise there is a sequence $\{P_\nu, Q_\nu\}_\nu\subset \partial \B$, $P_\nu\neq Q_\nu$, $\lim_\nu P_\nu =\lim_\nu Q_\nu =P\subset \partial \B$, $\angle OP_\nu Q_\nu\leq \alpha$. Choose an arbitrary support hyperplane $\Pi_\nu$ to $\B$ at $Q_\nu$. Without loss of generality we may assume the existence of a limit $\lim_\nu \Pi_\nu=\Pi$, and thus $\Pi$ is a support hyperplane to $\B$ at $P$. Denoting $R_\nu:= \Pi_\nu\cap (O P_\nu)$, we get
\begin{align*}
    \widehat{((OP),\Pi)} & =\lim_\nu\widehat{((OP_\nu),\Pi_\nu)} \leq \lim_\nu\angle OR_\nu Q_\nu\\
& \leq \lim_\nu\angle OP_\nu Q_\nu\quad\mbox{because $P_\nu\in [OR_\nu]$},\\
& \leq \alpha,
\end{align*}
see Figure~\ref{fig_scontr_horiz1b}(B),
which is impossible for $\alpha\in (0,\pi/2-\bar\xi)$, because $\widehat{((OP),\Pi)}\geq \pi/2-\bar\xi$.

\definecolor{uuuuuu}{rgb}{0.26666666666666666,0.26666666666666666,0.26666666666666666}
\begin{figure}
\begin{tabular}{cc}
\begin{minipage}[h]{0.49\linewidth}
\definecolor{zzttqq}{rgb}{0.6,0.2,0.}
\definecolor{uuuuuu}{rgb}{0.26666666666666666,0.26666666666666666,0.26666666666666666}
\definecolor{qqwuqq}{rgb}{0.,0.39215686274509803,0.}
\definecolor{qqqqff}{rgb}{0.,0.,1.}
\begin{tikzpicture}[line cap=round,line join=round,>=triangle 45,x=2.05cm,y=2.05cm]
\clip(-1.4215536869531458,1.863129896154361) rectangle (3.065301680919299,5.280042830149537);
\draw [shift={(1.1197277368122516,4.2199659671015315)},color=qqwuqq,fill=qqwuqq,fill opacity=0.1] (0,0) -- (-133.26656578969553:0.517714080908359) arc (-133.26656578969553:-108.09920109297994:0.517714080908359) -- cycle;
\fill[color=lightgray,%
fill=lightgray,%
fill opacity=0.1] (1.1197277368122516,4.2199659671015315) -- (0.5955563162011475,4.13593102252658) -- (0.28433282934843307,3.705864177342698) -- (0.3683677739233848,3.1816927567315934) -- (0.7984346191072668,2.870469269878879) -- (1.3226060397183712,2.9545042144538307) -- (1.6338295265710854,3.3845710596377128) -- (1.5497945819961338,3.9087424802488173) -- cycle;
\draw (1.1197277368122516,4.2199659671015315)-- (2.3398979013045897,3.3364265456608013);
\draw (1.1197277368122516,4.2199659671015315)-- (0.3632671582529751,3.4162904140669275);
\draw (1.1197277368122516,4.2199659671015315)-- (0.922314237095856,3.615950085082242);
\draw [color=zzttqq] (1.1197277368122516,4.2199659671015315)-- (0.5955563162011475,4.13593102252658);
\draw [color=zzttqq] (0.5955563162011475,4.13593102252658)-- (0.28433282934843307,3.705864177342698);
\draw [color=zzttqq] (0.28433282934843307,3.705864177342698)-- (0.3683677739233848,3.1816927567315934);
\draw [color=zzttqq] (0.3683677739233848,3.1816927567315934)-- (0.7984346191072668,2.870469269878879);
\draw [color=zzttqq] (0.7984346191072668,2.870469269878879)-- (1.3226060397183712,2.9545042144538307);
\draw [color=zzttqq] (1.3226060397183712,2.9545042144538307)-- (1.6338295265710854,3.3845710596377128);
\draw [color=zzttqq] (1.6338295265710854,3.3845710596377128)-- (1.5497945819961338,3.9087424802488173);
\draw [color=zzttqq] (1.5497945819961338,3.9087424802488173)-- (1.1197277368122516,4.2199659671015315);
\draw (1.1197277368122516,4.2199659671015315)-- (-0.732865600751041,3.922958972213191);
\begin{scriptsize}
\draw [fill=xdxdff] (0.3452671582529751,3.3932904140669275) circle (2.5pt);
\draw[color=black] (0.2075302875620538044,3.457872043303342) node {$Q$};
\draw [fill=xdxdff] (0.922314237095856,3.615950085082242) circle (2.5pt);
\draw[color=black] (1.03797595815583705,3.6406149072730636) node {$O$};
\draw[color=qqwuqq] (0.81699310852253056,3.7061006352125062) node {$\alpha$};
\draw [fill=xdxdff] (1.1197277368122516,4.2199659671015315) circle (2.5pt);
\draw[color=black] (1.2360452617097637,4.33689574367264395) node {$P$};
\draw[color=black] (1.1325024455280919,3.364500730788605) node {$B$};
\end{scriptsize}
\end{tikzpicture}
\end{minipage}
&
\begin{minipage}[h]{0.49\linewidth}
\begin{tikzpicture}[line cap=round,line join=round,>=triangle 45,x=.8cm,y=.8cm]
\clip(-4.64,-5.58) rectangle (4.8,1.34);
\draw (-4.,0.)-- (4.,0.);
\draw [shift={(0.,-5.)}] plot[domain=0.6435011087932844:2.49145814292177,variable=\t]({1.*5.*cos(\t r)+0.*5.*sin(\t r)},{0.*5.*cos(\t r)+1.*5.*sin(\t r)});
\draw (0.,-5.)-- (2.56,0.7);
\draw (0.,-5.)-- (0.,0.);
\begin{scriptsize}
\draw (3.1,-1.72) node[anchor=north west] {$\partial \B$};
\draw [fill=xdxdff] (0.,-5.) circle (2.5pt);
\draw[color=black] (-0.4,-4.78) node {$O$};
\draw [fill=xdxdff] (0.,0.) circle (2.5pt);
\draw[color=black] (0.14,0.28) node {$Q_\nu$};
\draw [fill=xdxdff] (2.245614035087719,0.) circle (2.5pt);
\draw[color=black] (2.7,0.24) node {$R_\nu$};
\draw[color=black] (3.84,0.24) node {$\Pi_\nu$};
\draw [fill=xdxdff] (2.0484956320222016,-0.43889644432556646) circle (2.5pt);
\draw[color=black] (2.44,-0.4) node {$P_\nu$};
\end{scriptsize}
\end{tikzpicture}
\end{minipage}\\
(A) & (B)
\end{tabular}
\caption{Constructions in the proof of Lemma~\ref{lm_scontr_horiz1}. (A): Definition of $\Delta(\alpha)$. (B): Proof of $\Delta(\alpha)>0$.}
\label{fig_scontr_horiz1b}
\end{figure}
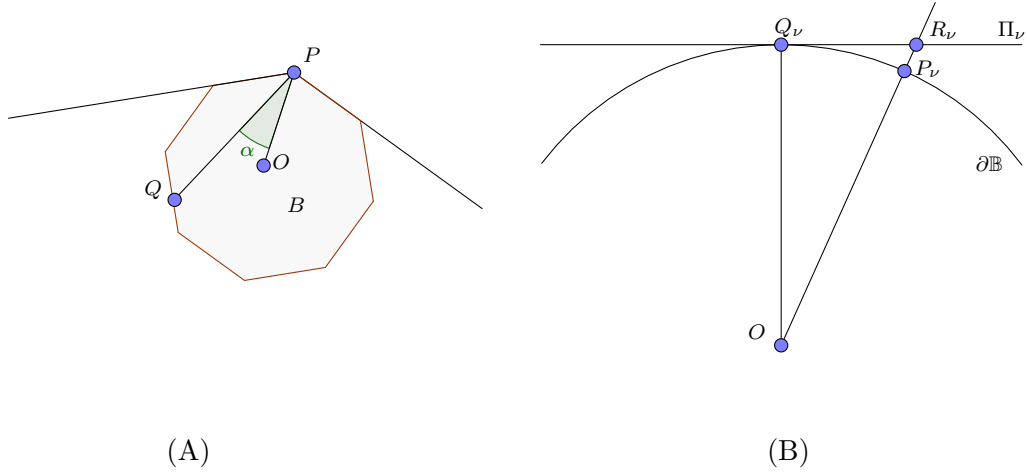

Consider the set $\D_3:=\{z\in \R^n\colon \|A_2-z\| \leq \|A_1-z\|\}$ (see Figure~\ref{fig_scontr_horiz1}) and
observe that $\{A_2, A_3\}\subset \D_3$.
By Lemma~\ref{lm_scontr_mediatr1} there is an $\bar{\varepsilon}>0$ such that
\begin{equation}\label{eq_scontr_cal1}
|A_2 C_\alpha|\leq 3/4 |A_1 A_2| \quad\mbox{when $\angle A_1A_2 C_\alpha\leq 2\bar{\varepsilon}$, $C_\alpha\in M(A_1,A_2)$,}
\end{equation}
where $M(A_1,A_2)$ stands for the \emph{mediatrix} of the segment $[A_1 A_2]$ (i.e.\ the set of points equidistant from the endpoints of the segment).
 Fix an arbitrary
$\alpha\in (0,\pi/2-\bar\xi)$ and
let $\varepsilon_1 := \bar{\varepsilon} \wedge \alpha/2$, so that
$\Delta(2\varepsilon_1)\geq \Delta(\alpha)>0$ by definition of $\Delta(\cdot)$.
We can find an $r\geq \|A_2 -A_3\|$ such that $r\Delta(2\varepsilon_1) > |A_1 A_2|$, thus
for every
$\{P, Q\}\subset \partial (r\B +C)$, $C\in \R^n$,  $P\neq Q$, $\angle CPQ\leq 2\varepsilon_1$ one has
$|PQ|>|A_1A_2|$.
Suppose that $\angle A_1 A_2 A_3 \leq 2\varepsilon_1$.
Consider the point $C\in [A_2 A_3)$ with $\| A_2-C\|=r$. One has $A_2\in \partial (r\B+C)$. Thus if
$Q\in (A_1A_2) \cap \partial (r\B+C)$, $Q\neq A_2$ and is on the line $(A_1A_2)$ on the same side of $A_2$ as $A_1$, then $|A_2Q|> |A_1A_2|$, which implies $A_1\in r\B +C$,
i.e. $\|A_1-C\| < r= \|A_2-C\|$, or, in
other words, $C\in \D_3^c$.
Thus there is a point $C'\in [A_3, C]$ such that
$C'\in M(A_1,A_2)$, see Figure~\ref{fig_scontr_horiz1}.  Therefore,
\begin{align*}
    |A_2 A_3|\leq  |A_2 C'|\leq 3/4 |A_1 A_2|,
\end{align*}
the last inequality being due to~\eqref{eq_scontr_cal1}.
\end{proof}

\begin{figure}[h]
\definecolor{zzttqq}{rgb}{0.6,0.2,0.}
\definecolor{qqqqff}{rgb}{0.,0.,1.}
\definecolor{ffvvqq}{rgb}{1.,0.3333333333333333,0.}
\begin{tikzpicture}[line cap=round,line join=round,>=triangle 45,x=.8cm,y=.8cm]
\clip(-6.160724282766859,-2.0913898964255115) rectangle (0.58179294396102584,3.8371842689794757);
\fill[line width=1.6pt,color=lightgray,%
fill=lightgray,%
fill opacity=0.1] (0.7522994672994994,-5.744791688390951) -- (-6.915534509489739,-6.045674767887297) -- (-2.638582435422924,-0.8881737373949808) -- (-2.638582435422924,-0.013107439918230152) -- (-2.638582435422924,0.8881737373949815) -- (0.6509975114486717,4.828805264080135) -- cycle;
\draw (0.,0.)-- (-7.582707016166586,3.9465675561640703);
\draw [rotate=-10] (0.,0.)-- (-7.74,0.);
\begin{scriptsize}
\draw [color=black](-1.5634630999647638,2.480831507890512) node[anchor=north west] {$M(A_1, A_2$)};
\draw [domain=-6.160724282766859:-2.638582435422924] plot(\x,{(-3.4120573161959524-1.7938859098273405*\x)/-1.4876127057104833});
\draw [domain=-2.638582435422924:0.8179294396102584] plot(\x,{(--6.015576801126841--1.7938859098273405*\x)/1.4876127057104833});
\draw [line width=1.6pt,color=zzttqq] (0.7522994672994994,-5.744791688390951)-- (-6.915534509489739,-6.045674767887297);
\draw [line width=1.6pt,color=zzttqq] (-6.915534509489739,-6.045674767887297)-- (-2.638582435422924,-0.8881737373949808);
\draw [line width=1.6pt,color=zzttqq] (-2.638582435422924,-0.8881737373949808)-- (-2.638582435422924,-0.013107439918230152);
\draw [line width=1.6pt,color=zzttqq] (-2.638582435422924,-0.013107439918230152)-- (-2.638582435422924,0.8881737373949815);
\draw [line width=1.6pt,color=zzttqq] (-2.638582435422924,0.8881737373949815)-- (0.6509975114486717,4.828805264080135);
\draw [line width=1.6pt,color=zzttqq] (0.6509975114486717,4.828805264080135)-- (0.7522994672994994,-5.744791688390951);
\draw [fill=xdxdff] (-5.164134378080573,2.6877743197452006) circle (2.5pt);
\draw[color=black] (-5.001261438610159,3.0715012586873187) node {$C$};
\draw [fill=xdxdff] (-2.339781346801269,1.2177847742391124) circle (2.5pt);
\draw[color=black] (-2.4198158610537335,1.6932718401614364) node {$C'$};
\draw [fill=xdxdff, rotate=-10] (-4.726195141133405,0.) circle (2.5pt);
\draw[color=black, rotate=-10] (-3.929305224201135,0.4681790236939853) node {$A_1$};
\draw [fill=xdxdff] (-1.2131408571162743,0.6314027876251229) circle (2.5pt);
\draw[color=black] (-1.019709785090926,1.0807254319277109) node {$A_3$};
\draw [fill=xdxdff] (0.,0.) circle (2.5pt);
\draw[color=black] (0.2053830313765303,0.3806723939463102) node {$A_2$};
\draw [fill=xdxdff] (0.6509975114486717,4.828805264080135) circle (2.5pt);
\draw[color=black] (1.9555156263300393,3.6621710094841253) node {$Q$};
\draw[color=black] (-1.135419314373787,-0.9100503948318972) node {$\D_3$};
\end{scriptsize}
\end{tikzpicture}
\caption{Proof of the claim of Lemma~\ref{lm_scontr_horiz1}.}
\label{fig_scontr_horiz1}
\end{figure}

\begin{corollary}\label{co_scontr_horiz2}
Assume that the vector $(A_1,\ldots, A_r)$  is self-contracted with respect to
the norm $\|\cdot\|$ and
has alternating directions along some axis $x=\mathrm{span}\{\nu\}$
for some $\nu\in \R^n$, i.e.\ the finite sequence of numbers $\{(A_{k+1}-A_k)\cdot \nu\}_{k=1}^{r-1}$
has alternating signs.
If
each segment $[A_k A_{k+1}]$, $k=1, \ldots, r-1$, is
$\varepsilon_1$-horizontal
with respect to $x$, 
where $\varepsilon_1>0$ is as in Lemma~\ref{lm_scontr_horiz1},
then
\begin{eqnarray}
\ell(A_1,\ldots, A_r)& \leq 4|A_1A_2|. \label{eq_sumhoriz1}
\end{eqnarray}
\end{corollary}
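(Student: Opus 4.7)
The plan is to reduce the claim to an iterated application of Lemma~\ref{lm_scontr_horiz1}. For each index $k\in\{2,\ldots,r-1\}$, the subvector $(A_{k-1},A_k,A_{k+1})$ is still self-contracted by Remark~\ref{rm_sc1subvec1}, so if I can verify that $\angle A_{k-1}A_kA_{k+1}\leq 2\varepsilon_1$, then Lemma~\ref{lm_scontr_horiz1} will yield the contraction $|A_kA_{k+1}|\leq \tfrac{3}{4}|A_{k-1}A_k|$.

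The only geometric observation needed is the angle bound, and it follows directly from combining $\varepsilon_1$-horizontality with the alternation hypothesis. Each of the segments $[A_{k-1}A_k]$ and $[A_kA_{k+1}]$ being $\varepsilon_1$-horizontal with respect to $x=\mathrm{span}\{\nu\}$ means that the line carrying it makes angle at most $\varepsilon_1$ with $x$. The alternation assumption says that $(A_k-A_{k-1})\cdot\nu$ and $(A_{k+1}-A_k)\cdot\nu$ have opposite signs, or equivalently $(A_{k-1}-A_k)\cdot\nu$ and $(A_{k+1}-A_k)\cdot\nu$ have the \emph{same} sign. Hence both vectors $A_{k-1}-A_k$ and $A_{k+1}-A_k$ lie in one and the same open half-space bounded by $\nu^\bot$, and within that half-space each of them makes angle at most $\varepsilon_1$ with whichever of $\pm\nu$ defines the half-space. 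The triangle inequality for angles then gives $\angle A_{k-1}A_kA_{k+1}\leq 2\varepsilon_1$, as required.

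With the contraction $|A_kA_{k+1}|\leq \tfrac{3}{4}|A_{k-1}A_k|$ established for every $k\geq 2$, an immediate induction yields $|A_kA_{k+1}|\leq (3/4)^{k-1}|A_1A_2|$ for all $k\in\{1,\ldots,r-1\}$, and summing the geometric series gives
\[
\ell(A_1,\ldots,A_r)=\sum_{k=1}^{r-1}|A_kA_{k+1}|\leq |A_1A_2|\sum_{k=0}^{\infty}\left(\frac{3}{4}\right)^{k}=4\,|A_1A_2|,
\]
which is exactly~\eqref{eq_sumhoriz1}. There is no real obstacle beyond the angle observation above; once the triple $(A_{k-1},A_k,A_{k+1})$ is certified to satisfy the hypothesis of Lemma~\ref{lm_scontr_horiz1}, the rest is bookkeeping.
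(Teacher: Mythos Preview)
Your proof is correct and follows the same approach as the paper: bound $\angle A_{k-1}A_kA_{k+1}\le 2\varepsilon_1$, apply Lemma~\ref{lm_scontr_horiz1} to each triple, and sum the geometric series. In fact you are more careful than the paper at the one nontrivial point: the paper simply invokes ``the triangle inequality for $\widehat{((A_{k-1}A_k),x)}$ and $\widehat{((A_kA_{k+1}),x)}$'', which alone only bounds the angle between the two \emph{lines} by $2\varepsilon_1$; your explicit use of the alternation hypothesis to place both vectors $A_{k-1}-A_k$ and $A_{k+1}-A_k$ on the same side of $\nu^\bot$ is exactly what is needed to rule out the $\pi-2\varepsilon_1$ alternative and obtain the vertex-angle bound.
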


\begin{proof}
By triangle inequality for $\widehat{((A_{k-1}A_k), x)}$ and $\widehat{((A_kA_{k+1}), x)}$ 
we get $\angle A_{k-1}A_kA_{k+1} \leq 2\varepsilon_1$.
Hence
applying Lemma~\ref{lm_scontr_horiz1} to each consecutive triple $(A_{k-1}, A_k, A_{k+1})$ yields
\[
|A_k A_{k+1}|\leq 3/4 |A_{k-1} A_k|, \quad k=2,\ldots, r-1.
\]
Thus,
\begin{align*}
\ell(A_1,\ldots, A_r)  &= \sum_{k=1}^{r-1} |A_kA_{k+1}| \leq |A_1A_2| \sum_{k=1}^{r-1} \left(\frac 3 4\right)^{k-1}
\leq 4|A_1 A_2|
\end{align*}
proving~\eqref{eq_sumhoriz1}.
\end{proof}

\section{Inductive construction}

From now on we consider the constants $\varepsilon_0>0$ defined by Lemma~\ref{lm_scontr_defxi1} and $\varepsilon_1>0$ defined by Lemma~\ref{lm_scontr_horiz1}.
Let also $\bar\delta$ and $\xi$ be as defined in Section~\ref{sec_scontr_partconv1}, the constant $C(\xi)$
be defined by~\eqref{eq_scontr_estproj1b} 
and set
\[
\delta_0:=\bar\delta 
\wedge \arctan \frac{\sin\varepsilon_0}{8(n-1)
C(\xi)}
\wedge \arctan \left(\frac{1}{8(n-1)(3\cot\varepsilon_1+ 8/\sin\varepsilon_1)C(\xi)}\right).
\]
The proof of Theorem~\ref{th_scontr_estpolygonal1} will be based on the following assertion.

\begin{proposition}\label{prop_scontr_indmain1}
Assume that $\delta\in (0, \delta_0)$.
Then
there exists a constant $C>0$ depending only on $\|\cdot\|$ and on $\delta$ such that
whenever the vector
$(A_1,\ldots, A_r)\subset \R^n$
is self-contracted with respect to
the norm $\|\cdot\|$ and
$(A_1,\ldots, A_r)\subset \P_{\alpha_n} +A_r$
for some $\alpha_n\in \{1,\ldots, N\}$,
 then
\begin{equation}\label{eq_scontr_estv1_ind1a}
\ell(A_1,\ldots, A_r)\leq C|A_1A_r|,
\end{equation}
with $C>0$ depending only on $\|\cdot\|$, $n$, $\delta$ and $\xi$.
\end{proposition}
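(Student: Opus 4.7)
The plan is to proceed by induction on the space dimension $n$, the base case $n=1$ being immediate: self-contractedness reduces to monotonicity of a real sequence, so $\ell(A_1,\ldots,A_r) = |A_1A_r|$. For the inductive step, I would adopt the (not necessarily orthogonal) coordinate system attached to $\P_{\alpha_n}$, namely the axis $x^n := \mathrm{span}\{\nu^{\alpha_n}\}$ together with the $(n-1)$-dimensional ``horizontal'' hyperplane $\Pi := (\nu^{\alpha_n})^\bot$. Each segment $[A_jA_{j+1}]$ is classified as \emph{vertical} or \emph{horizontal} according to whether $\widehat{((A_jA_{j+1}),\Pi)}$ exceeds the threshold $\varepsilon_1$ of Lemma~\ref{lm_scontr_horiz1} or not, the threshold being chosen so that Lemma~\ref{lm_scontr_epsconvA1Ar} applies to vertical segments (using $\delta\leq\delta_0\leq\bar\delta$) and the Corollary~\ref{co_scontr_horiz2} of Lemma~\ref{lm_scontr_horiz1} applies to chains of horizontal ones.

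The vertical part is then handled by monotonicity. Since every vertical segment satisfies $(A_j-A_{j+1})\cdot \nu^{\alpha_n}>0$ by Lemma~\ref{lm_scontr_epsconvA1Ar}, the $\nu^{\alpha_n}$-coordinate strictly decreases along vertical steps. Using $|A_jA_{j+1}|\sin\varepsilon_1\leq (A_j-A_{j+1})\cdot\nu^{\alpha_n}$ for each vertical $j$, together with the bound $\sum_j(A_j-A_{j+1})\cdot \nu^{\alpha_n}\leq |A_1A_r|\cdot|\nu^{\alpha_n}|+\sum_{j\text{ horiz}}|A_jA_{j+1}|\sin\varepsilon_1$ (each horizontal segment tilts $x^n$ by at most $|A_jA_{j+1}|\sin\varepsilon_1$), one sees that the total Euclidean length of vertical segments is at most a constant multiple of $|A_1A_r|$ plus a small multiple (controllable through $\delta_0$) of the horizontal length.

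The harder part is the horizontal estimate, which is the heart of the inductive step. Following the strategy sketched in Section~\ref{sec_sc_easy1} together with the commentary after it, I would group consecutive horizontal segments into maximal horizontal runs, each still self-contracted by Remark~\ref{rm_sc1subvec1}. Within each run, imitating Step~1 of the easy-case proof, I would further partition its vertices among shifted sub-pyramids $\P_{\alpha_{n-1}}+B$ with $\alpha_{n-1}\in\mathbb{A}_{\varepsilon_0}(\Pi)$, so that the pair $(\P_{\alpha_{n-1}},\P_{\alpha_n})$ is admissible; the segments linking different sub-pyramids contribute at most $O(N)|A_1A_r|$ by Remark~\ref{rm_sc1subvec2}. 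To each sub-vector inside a single sub-pyramid, apply the inductive hypothesis in dimension $n-1$ after projecting onto $\Pi$ and invoking Lemma~\ref{lm_scontr_estproj1} to compare Euclidean lengths with projected ones, which is legitimate because $\nu^{\alpha_{n-1}}$ lies within angle $\xi$ of $\Pi$ by admissibility. The main obstacle I expect is closing the recursion quantitatively: the horizontal bound feeds back into the vertical estimate through the term $\sum_{j\text{ horiz}}|A_jA_{j+1}|\sin\varepsilon_1$, and it is precisely the calibration of $\delta_0$ (in particular the factor $\arctan(\sin\varepsilon_0/(8(n-1)C(\xi)))$, which governs how close to $\Pi$ the horizontal directions must be, together with the $\varepsilon_1$-dependent factor controlling the projection losses) that forces the feedback coefficient to be strictly less than one, so that the geometric sum over recursion levels converges to a finite constant $C$ depending only on $\|\cdot\|$, $n$, $\delta$, and $\xi$.
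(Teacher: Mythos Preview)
Your overall architecture---split into vertical and horizontal segments, control the vertical part by monotonicity of the $\nu^{\alpha_n}$-coordinate, and feed the horizontal part back into a sub-pyramid decomposition---is indeed the skeleton of the paper's argument. But the induction you set up does not close, for one specific reason: you propose induction on the \emph{ambient dimension} $n$ and write ``apply the inductive hypothesis in dimension $n-1$ after projecting onto $\Pi$''. Orthogonal projection onto $\Pi=(\nu^{\alpha_n})^\bot$ does not preserve the self-contracted property with respect to any norm on $\Pi$: the condition $\|A_k-A_j\|\le\|A_k-A_i\|$ involves the full $n$-dimensional norm, and there is no reason the projected points satisfy any analogous inequality. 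So once you project, you have nothing to which the $(n-1)$-dimensional proposition applies. Lemma~\ref{lm_scontr_estproj1} only compares Euclidean lengths of projections; it says nothing about self-contractedness.

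The paper circumvents this by never leaving $\R^n$. Its induction parameter is not the dimension but the depth $i$ of an admissible tuple $(\P_{\alpha_i},\ldots,\P_{\alpha_n})$ of nested shifted pyramids (Proposition~\ref{prop_scontr_indmain0}). The base case $i=1$ (Lemma~\ref{lm_scontr_base0}) assumes the points lie in $n$ nested pyramids centred at auxiliary points $A_r,A_{r+1},\ldots,A_{r+n-1}$, which gives a full coordinate system and enough monotonicity in each of the $n$ directions to bound the variation directly, without any lower-dimensional hypothesis. The inductive step (Lemma~\ref{lm_scontr_ind0}) goes from $i-1$ to $i$ by adding one more pyramid $\P_{\alpha_{i-1}}$ to the tuple, applying the hypothesis to the subvector that lands in $\P_{\alpha_{i-1}}+A_{k_l}$---still a self-contracted vector in $\R^n$, no projection needed. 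Proposition~\ref{prop_scontr_indmain1} is then just the case $i=n$. Your sub-pyramid partition and the ``first-visit'' bookkeeping in Step~1 are exactly right; what needs to change is that the inductive hypothesis should be the $(i-1)$-case of Proposition~\ref{prop_scontr_indmain0} in $\R^n$, not the proposition in $\R^{n-1}$.
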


\begin{remark}\label{rem_scontr_homot1}
Note that both the self-contracted property and~\eqref{eq_scontr_estv1_ind1a} are stable under scalings and translations.
\end{remark}

Taking for the moment this result for granted, we may easily prove Theorem~\ref{th_scontr_estpolygonal1}.

\begin{proof}[Proof of Theorem~\ref{th_scontr_estpolygonal1}]
Up to scaling we may assume that the closed unit ball $\B$ of the norm $\|\cdot\|$ in $\R^n$ satisfies
$\min_{x\in \partial \B} |x|\geq 1$.
We identify now $\R^n$ with the subset $\R^n\times\{0\}$ of $\R^{n+1}$ and write every element
$x\in \R^{n+1}$ as a pair $\tilde x=(x', x_{n+1})$, $x'\in \R^n$, $x_{n+1}\in \R$, so that every element of
$x\in \R^n$ is identified with some $(x,0)\subset \R^{n+1}$.
Equip $\R^n\times [0,1]$ with the norm defined so that its closed unit ball be
$\tilde \B:=\B\times [-1,1]$. 
Define the decomposition of this
closed unit ball $\tilde \B$ as follows. If the family of sets $\{\T_j\}_{j=1}^N$ is the disjoint cover of $\partial \B\subset \R^n$ defined by Proposition~\ref{prop_part_convbody1}, we define the disjoint cover of $\partial \tilde \B$ by the family consisting of sets
$\tilde\T_j :=\T_j\times (-1,1)$, $j=1, \ldots, N$, and two additional sets $\tilde \T_{N+1}:= \B\times \{1\}$ and $\tilde \T_{N+2}:= \B\times \{-1\}$. Clearly, this cover satisfies the property defined by Proposition~\ref{prop_part_convbody1}.
We set then $\tilde\P_i:=\cup_{t\in [0,1]} t\T_i$.
Now we consider $(A_1, \ldots, A_r)$ as a subset of $\R^{n+1}$. Recalling Remark~\ref{rem_scontr_homot1}, without loss of generality we may scale all $A_j$ so as to have $|A_1A_r|=1$ and then shift them by a fixed vector so as to have
$A_r=(0,1)$ and all $A_j$ belong to the hyperplane $\{x_{n+1}=1\}$. Clearly, the vector
$(A_1, \ldots, A_r,0)$ is still self-contracted with respect to the introduced norm in $\R^{n+1}$ and $(A_1, \ldots, A_r,0)\subset \tilde\P_{N+1}$ by construction. Applying Proposition~\ref{prop_scontr_indmain1} with
$(A_1, \ldots, A_r,0)$ instead of $(A_1, \ldots, A_r)$,
$\R^{n+1}$ (with the introduced norm) instead of $\R^n$ (with the original norm $\|\cdot\|$), $n+1$ instead of $n$, $\tilde\P_j$ instead of $\P_j$ and
$\alpha_{n+1}:= N+1$, we get
\[
\ell(A_1,\ldots, A_r)\leq \ell(A_1,\ldots, A_r,0)\leq C|A_1|\leq   C\sqrt{2} = C\sqrt{2}|A_1A_r|
\]
with $C>0$ as claimed.
\end{proof}

The proof of Proposition~\ref{prop_scontr_indmain1} is the immediate application of the following inductive statement
with $i:=n$.

\begin{proposition}\label{prop_scontr_indmain0}
Assume that $\delta\in (0, \delta_0)$.
Then for every $i\in \{1, \ldots, n\}$
there exists a constant $C_{i}>0$ depending only on $\|\cdot\|$, $n$, $\delta$ and $\xi$ such that
whenever $(n-i+1)$-tuple $(\P_{\alpha_i},\ldots, \P_{\alpha_n})$ is admissible, the vector
$
(A_j)_{j=1}^{r+n-i}\subset \R^n$
is self-contracted with respect to
the norm $\|\cdot\|$ and
$(A_1,\ldots, A_r)\subset \P_{\alpha_j} +A_{r+n-j}$
for all $j=i,\ldots, n$,  then
\[
\ell(A_1,\ldots, A_r)\leq C_i|A_1A_r|.
\]
\end{proposition}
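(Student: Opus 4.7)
The plan is to establish Proposition~\ref{prop_scontr_indmain0} by forward induction on $i \in \{1, \ldots, n\}$, starting at the most constrained case $i = 1$ (the polygon lies in all $n$ shifted pyramids simultaneously) and decreasing the number of containment constraints by one at each step, until we arrive at the desired case $i = n$. Geometrically, as outlined in the discussion after the easy-case proof, I will exploit the fact that admissibility of the tuple $(\P_{\alpha_i}, \ldots, \P_{\alpha_n})$ provides an almost orthogonal family of directions $\nu^{\alpha_j}$, along each of which Lemma~\ref{lm_scontr_epsconvA1Ar} enforces a one-sided monotonicity on every sufficiently ``vertical'' step.

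For the base case $i = 1$, admissibility with $\xi < \pi/2$ makes the axes $x^j := \mathrm{span}\{\nu^{\alpha_j}\}$ linearly independent, so Lemma~\ref{lm_scontr_estproj1} applied with $\Pi = \R^n$ gives
\[
\ell(A_1, \ldots, A_r) \leq C(\xi) \sum_{j=1}^n \ell_{x^j}(A_1, \ldots, A_r).
\]
For each $j$ I split the steps $[A_k A_{k+1}]$ according to whether $\widehat{((A_k A_{k+1}), (\nu^{\alpha_j})^\bot)} > \delta$: on such steps Lemma~\ref{lm_scontr_epsconvA1Ar} (invoked inside $\P_{\alpha_j} + A_{r+n-j}$) yields $(A_k - A_{k+1}) \cdot \nu^{\alpha_j} > 0$, so their contribution telescopes and is bounded by the $\nu^{\alpha_j}$-diameter, which is $O(|A_1 A_r|)$ by Remark~\ref{rm_sc1subvec2}; on the complementary steps $|p_{x^j}(A_k - A_{k+1})| \leq |A_k A_{k+1}| \sin \delta$, whose total contribution is at most $n C(\xi) \sin \delta \cdot \ell(A_1, \ldots, A_r)$ and is absorbed into the left-hand side because the definition of $\delta_0$ makes $n C(\xi) \sin \delta_0 < 1$.

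For the inductive step $i - 1 \Rightarrow i$, I fix $x^i := \mathrm{span}\{\nu^{\alpha_i}\}$ and classify each segment $[A_k A_{k+1}]$ ($k \in \{1, \ldots, r-1\}$) as $\varepsilon_1$-horizontal with respect to $(\nu^{\alpha_i})^\bot$ (non-trivial $\nu^{\alpha_i}$-component) or $\varepsilon_1$-vertical. The horizontal steps are monotone in the $\nu^{\alpha_i}$-direction by Lemma~\ref{lm_scontr_epsconvA1Ar}, so their total Euclidean length is at most the $\nu^{\alpha_i}$-diameter divided by $\sin\varepsilon_1$, i.e., $O(|A_1 A_r|)$. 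For the $\varepsilon_1$-vertical steps I mimic Step~1 of the easy-case proof: I introduce an appropriately chosen new anchor $A_{r+n-i+1}$ and, for every $\alpha_{i-1} \in \{1, \ldots, N\}$ such that the augmented $(n-i+2)$-tuple $(\P_{\alpha_{i-1}}, \P_{\alpha_i}, \ldots, \P_{\alpha_n})$ is admissible, extract the subvector of vertices whose displacements from $A_{r+n-i+1}$ lie in $\P_{\alpha_{i-1}}$, paying the ``jumps'' between different sectors by boundedly many terms of size $O(|A_1 A_r|)$. Together with the inherited anchors $A_r, \ldots, A_{r+n-i}$ and the new $A_{r+n-i+1}$, each extracted subvector satisfies the hypotheses of Proposition~\ref{prop_scontr_indmain0} at level $i - 1$; its variation is then bounded by $C_{i-1} \cdot O(|A_1 A_r|)$, and summing over the $N$ choices of $\alpha_{i-1}$ closes the induction.

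The main technical obstacle, as I see it, is the construction of the new anchor $A_{r+n-i+1}$ and the verification that the augmented tuple satisfies the angle condition $\widehat{(\nu^{\alpha_{i-1}}, \Pi^{i-1})} < \xi$ of Definition~\ref{def_sc_admpyr1}. This is exactly where the nontrivial components of $\delta_0$ intervene: the term $\arctan(\sin \varepsilon_0 / (8(n-1) C(\xi)))$ controls, via Lemma~\ref{lm_scontr_defxi1}, the angular error incurred when passing from $\Pi^i$ to $\Pi^{i-1}$, while the term involving $\cot \varepsilon_1$ and $1/\sin \varepsilon_1$ absorbs, through Corollary~\ref{co_scontr_horiz2}, the contribution of segments that are nearly horizontal with respect to several of the axes $x^i, \ldots, x^n$ simultaneously. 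Once these angular fits and combinatorial verifications are in place, assembling the two contributions above yields $\ell(A_1, \ldots, A_r) \leq C_i |A_1 A_r|$ with $C_i$ depending only on $C_{i-1}, N, n, \varepsilon_0, \varepsilon_1, \xi$ and $\|\cdot\|$.
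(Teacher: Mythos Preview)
Your inductive scheme matches the paper's (Lemma~\ref{lm_scontr_base0} for $i=1$, Lemma~\ref{lm_scontr_ind0} for the step), and your base case is in fact cleaner than the paper's: you exploit the containment $(A_1,\ldots,A_r)\subset\P_{\alpha_1}+A_{r+n-1}$ and use all $n$ normals $\nu^{\alpha_1},\ldots,\nu^{\alpha_n}$ as an almost-orthogonal frame via Lemma~\ref{lm_scontr_estproj1}, so that each $\ell_{x^j}$ is bounded by $|A_1A_r|+2\tan\delta\cdot\ell$ through Lemma~\ref{lm_scontr_vert1a}, and the sum absorbs because $2nC(\xi)\tan\delta<1$ under $\delta<\delta_0$. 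The paper instead discards $\nu^{\alpha_1}$, sets $x^1:=(\mathrm{span}\{\nu^{\alpha_j}\}_{j=2}^n)^\bot$, and controls $\ell_{x^1}$ through the elaborate $\varepsilon_1$-horizontal analysis of Lemmata~\ref{lm_scontr_horiz4}--\ref{lm_scontr_horiz3}; your shortcut is a nice simplification.

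The inductive step, however, has a genuine gap. You classify segments by their angle with the hyperplane $(\nu^{\alpha_i})^\bot$, but to invoke the level-$(i-1)$ hypothesis you need the augmented tuple $(\P_{\alpha_{i-1}},\P_{\alpha_i},\ldots,\P_{\alpha_n})$ to be admissible, i.e.\ $\alpha_{i-1}\in\mathbb{A}_{\varepsilon_0}(\Pi^{i-1})$ with $\Pi^{i-1}=(\mathrm{span}\{\nu^{\alpha_j}\}_{j=i}^n)^\bot$. A displacement close to $(\nu^{\alpha_i})^\bot$ need not be close to the much smaller subspace $\Pi^{i-1}$, so the admissibility check cannot be carried out from your decomposition. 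The paper's Lemma~\ref{lm_scontr_ind0} therefore works with $\Pi^{i-1}$ from the start: it extracts maximal runs $A_{q_l},\ldots,A_{k_l}$ where every segment $[A_jA_{k_l}]$ (to the \emph{end} of the run, not consecutive) is $\varepsilon_0$-horizontal with respect to $\Pi^{i-1}$, so that $A_j-A_{k_l}\in V_{\varepsilon_0}(\Pi^{i-1})$ and hence lies in some $\P_{\alpha_{i-1}}+A_{k_l}$ with $\alpha_{i-1}\in\mathbb{A}_{\varepsilon_0}(\Pi^{i-1})$. Crucially, the new anchor is not an externally appended $A_{r+n-i+1}$---no such point can be adjoined while keeping the data self-contracted---but each run-end $A_{k_l}$, which plays the role of ``$A_r$'' at level $i-1$ while the old anchors $A_r,\ldots,A_{r+n-i}$ shift up one index. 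The complement of the runs consists of segments $\varepsilon_0$-vertical with respect to $\Pi^{i-1}$ and is handled by combining almost-monotonicity along \emph{all} of $x^i,\ldots,x^n$ (Lemmata~\ref{lm_scontr_vert1a} and~\ref{lm_scontr_estvarPi-i0}), not along $x^i$ alone. (Also, a terminology slip: in the paper's convention ``$\varepsilon$-horizontal with respect to $(\nu^{\alpha_i})^\bot$'' means \emph{small} $\nu^{\alpha_i}$-component, so your parenthetical and the sentence following it are inverted.)
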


The proof of the above Proposition~\ref{prop_scontr_indmain0} is just a (finite) induction on $i\in \{1, \ldots, n\}$ with the base of induction
given by Lemma~\ref{lm_scontr_base0} and the inductive step given by Lemma~\ref{lm_scontr_ind0} provided in the sequel. 

\subsection{Base of induction}

We prove the following statement that will serve as a base of induction.

\begin{lemma}\label{lm_scontr_base0}
Let
\[
\delta\in \left(0, \bar\delta
\wedge \arctan \left(\frac{1}{8(n-1)(3\cot\varepsilon_1+ 8/\sin\varepsilon_1)C(\xi)}\right)
\right),
\]
where $\varepsilon_1>0$ is defined by Lemma~\ref{lm_scontr_horiz1}
(this is true in particular when $\delta\in (0,\delta_0)$),
and
the $n$-tuple $(\P_{\alpha_1},\ldots, \P_{\alpha_n})$ with
$\alpha_j\in \{1,\ldots, N\}$, $j=1, \ldots, n$,
 be admissible.
Assume that the vector
$(A_1,\ldots, A_r, A_{r+1}, \ldots A_{r+n-1})\subset \R^n$
is self-contracted with respect to
the norm $\|\cdot\|$ and
$(A_1,\ldots, A_r)\subset \P_{\alpha_j} +A_{r+n-j}$
for all $j=1,\ldots, n$. Then
\begin{equation}\label{eq_scontr_estv1_base1}
\ell(A_1,\ldots, A_r)\leq C_1|A_1A_r|
\end{equation}
for some constant $C_1>0$ depending only on the norm $\|\cdot\|$, $n$, $\delta$ and $\xi$.
\end{lemma}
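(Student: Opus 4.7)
The admissible $n$-tuple $(\P_{\alpha_1},\ldots,\P_{\alpha_n})$ endows $\R^n$ with axes $x^j := \mathrm{span}\{\nu^{\alpha_j}\}$ for $j=2,\ldots,n$ and with $x^1 := (\mathrm{span}\{\nu^{\alpha_j}\}_{j=2}^n)^\bot$. Admissibility is precisely the hypothesis $\widehat{(\nu^{\alpha_{j-1}},\Pi^{j-1})}<\xi$ for $j=2,\ldots,n$, exactly what Lemma~\ref{lm_scontr_estproj1} needs (with $\Pi=\R^n$ and $\zeta=\xi$) to conclude, by summing $|A_kA_{k+1}|\leq C(\xi)\sum_j|p_{x^j}(A_k-A_{k+1})|$ over $k$, that
\[
\ell(A_1,\ldots,A_r)\leq C(\xi)\sum_{j=1}^n \ell_{x^j}(A_1,\ldots,A_r).
\]
The plan is to estimate, for each $j$, $\ell_{x^j}(A_1,\ldots,A_r)\leq C_0|A_1A_r|+\eta_j\,\ell(A_1,\ldots,A_r)$ with $C(\xi)\sum_j\eta_j<1$, and then absorb; the three factors defining $\delta_0$ are calibrated to make this absorption succeed.

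For $j\in\{2,\ldots,n\}$ the argument mirrors Step~2 of the easy case. Classify each segment $[A_kA_{k+1}]$ as $\delta$-vertical or $\delta$-horizontal with respect to $(\nu^{\alpha_j})^\bot$. By Lemma~\ref{lm_scontr_epsconvA1Ar}, applied thanks to $(A_1,\ldots,A_r)\subset \P_{\alpha_j}+A_{r+n-j}$ and the self-contractedness of the subvector $(A_1,\ldots,A_r,A_{r+n-j})$, each $\delta$-vertical segment satisfies $(A_k-A_{k+1})\cdot\nu^{\alpha_j}>0$, while each $\delta$-horizontal segment has $|p_{x^j}(A_{k+1}-A_k)|\leq\sin\delta\cdot|A_kA_{k+1}|$. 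Telescoping the monotone vertical increments against the possibly oscillating horizontal ones yields
\[
\ell_{x^j}(A_1,\ldots,A_r)\leq |p_{x^j}(A_1-A_r)|+2\sin\delta\cdot\ell(A_1,\ldots,A_r)\leq C_0|A_1A_r|+2\sin\delta\cdot\ell(A_1,\ldots,A_r),
\]
the last step via Remark~\ref{rm_sc1subvec2}, so $\eta_j=2\sin\delta$ suffices for $j\geq 2$.

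The main obstacle is $j=1$: $x^1$ is orthogonal to (not aligned with) the directions $\nu^{\alpha_i}$, so Lemma~\ref{lm_scontr_epsconvA1Ar} affords no direct monotonicity. I would split the segments into $\varepsilon_1$-horizontal and $\varepsilon_1$-vertical with respect to $x^1$. An $\varepsilon_1$-vertical segment has $|p_{(x^1)^\bot}(A_{k+1}-A_k)|\geq\sin\varepsilon_1\cdot|A_kA_{k+1}|$, so applying Lemma~\ref{lm_scontr_estproj1} once more to $\Pi=(x^1)^\bot$ forces $|p_{x^i}(A_{k+1}-A_k)|\geq|A_kA_{k+1}|\sin\varepsilon_1/((n-1)C(\xi))$ for some $i\geq 2$; for $\delta<\delta_0$ this makes the segment $\delta$-vertical with respect to $(\nu^{\alpha_i})^\bot$, so its $x^1$-projection, bounded by $\sin\varepsilon_1\cdot|A_kA_{k+1}|$, can be charged back to the $\ell_{x^i}$-estimate of the previous paragraph. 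The $\varepsilon_1$-horizontal segments are grouped into maximal consecutive blocks: within a block two consecutive segments force $\angle A_{k-1}A_kA_{k+1}\leq 2\varepsilon_1$, and after passing to the $x^1$-direction-alternating subvector (which preserves $\ell_{x^1}$ of the block exactly as in Step~2 of the easy case) Corollary~\ref{co_scontr_horiz2} bounds the block's $x^1$-variation by $4$ times its first-segment length.

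Each block-start is either $A_1$ (handled by Remark~\ref{rm_sc1subvec2}, giving a term $C_0|A_1A_r|$) or directly preceded by an $\varepsilon_1$-vertical segment whose length has already been charged to some $\ell_{x^i}$-budget for $i\geq 2$; the constants $\cot\varepsilon_1$ and $1/\sin\varepsilon_1$ arising when converting $|A_kA_{k+1}|$-bounds into $|p_{x^1}(A_k-A_{k+1})|$-bounds match precisely the third factor in the definition of $\delta_0$, yielding $\eta_1$ also small enough that $C(\xi)\sum_j\eta_j<1$. Absorbing $\ell(A_1,\ldots,A_r)$ then produces $\ell(A_1,\ldots,A_r)\leq C_1|A_1A_r|$. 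The delicate part, as expected, is the $j=1$ analysis: controlling how block boundaries are spawned by $\varepsilon_1$-vertical segments and making the length borrowed from the $\ell_{x^j}$ budgets with $j\geq 2$ fit into a single self-consistent inequality is the main bookkeeping challenge of the proof.
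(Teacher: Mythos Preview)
Your overall architecture matches the paper's: same coordinate axes $x^1,\ldots,x^n$ attached to the admissible $n$-tuple, same split into the ``monotone'' axes $j\ge 2$ (handled via Lemma~\ref{lm_scontr_epsconvA1Ar}, exactly as the paper does through Lemma~\ref{lm_scontr_vert1a}/Lemma~\ref{lm_scontr_vert1}) and the exceptional axis $j=1$, and the same absorption scheme at the end. The bookkeeping order is slightly different (you absorb into the full $\ell$, the paper first expresses $\ell_{x^j}$ via $\ell_{x^1}$ and then $\ell_{x^1}$ via $\sum_{j\ge 2}\ell_{x^j}$), but that is cosmetic.

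The genuine gap is in your $j=1$ step. After passing a maximal $\varepsilon_1$-horizontal block to its $x^1$-alternating subvector $(A_{q_1},\ldots,A_{q_l})$, Corollary~\ref{co_scontr_horiz2} bounds the block's variation by $4|A_{q_1}A_{q_2}|$, where $A_{q_1}=A_q$ but $A_{q_2}$ is the \emph{first local $x^1$-extremum} of the block, generally \emph{not} $A_{q+1}$. You then want to charge $|A_{q_1}A_{q_2}|$ to the preceding $\varepsilon_1$-vertical segment $[A_{q-1}A_q]$, but self-contractedness only gives $\|A_qA_{q_2}\|\le\|A_{q-1}A_{q_2}\|$, and there is no reason for $|A_{q-1}A_{q_2}|$ to be controlled by $|A_{q-1}A_q|$ or by $\ell_{(x^1)^\bot}$: the segment $[A_{q-1}A_{q_2}]$ need not be $\varepsilon_1$-vertical. (A side slip: for an $\varepsilon_1$-vertical segment the $x^1$-projection is bounded by $\cos\varepsilon_1\cdot|A_kA_{k+1}|$, not $\sin\varepsilon_1\cdot|A_kA_{k+1}|$; what you actually need there is $|p_{x^1}|\le\cot\varepsilon_1\,|p_{(x^1)^\bot}|$.)

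The paper closes exactly this hole with an extra reorganization (Lemma~\ref{lm_scontr_hordir1} feeding into Lemma~\ref{lm_scontr_horiz3}): it first collapses runs of horizontal segments sharing the same $x^1$-direction, producing a reduced vector in which every horizontal segment $[A_kA_{k+1}]$ that is preceded by a vertical one also has $[A_{k-1}A_{k+1}]$ vertical. That extra verticality is precisely what lets one write
\[
|A_kA_{k+1}|\le |A_{k-1}A_k|+|A_{k-1}A_{k+1}|\le \tfrac{1}{\sin\varepsilon_1}\bigl(|A_{k-1}^\bot A_k^\bot|+|A_{k-1}^\bot A_{k+1}^\bot|\bigr),
\]
so that the block-initial segments sum to a constant times $\ell_{(x^1)^\bot}$. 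Without this device your charging step does not close, and this is where the constants $3\cot\varepsilon_1+8/\sin\varepsilon_1$ in the hypothesis on $\delta$ actually originate.
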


The rest of this section will be dedicated to the proof of the above Lemma~\ref{lm_scontr_base0}. To this aim we consider the coordinate system with the origin in $A_r$ and axes
$x^j$ directed along vectors $\nu^{\alpha_j}$ determined by the sets
$\P_{\alpha_j}$ with
$\alpha_j\in \{1,\ldots, N\}$, $j=2, \ldots, n$, and
$x^1:= (\mbox{span}\, \{\nu^{\alpha_j}\}_{j=2}^n)^\bot$
(with arbitrarily chosen direction).
For brevity we denote $A_i^j:=p_{x^j}(A_i)$.

\subsubsection{Common lemmata}

We will need a couple of assertions that will serve both for the base of induction and for the inductive step.

\begin{lemma}\label{lm_scontr_estvarPi-i0}
Assume that $\delta\in (0, \bar\delta)$,
and that for some 
$i\in \{2, \ldots, n\}$
 the
$(n-i+2)$-tuple of sets $(\P_{\alpha_{i-1}},\ldots, \P_{\alpha_n})$ is admissible.
Then for every vector $(A_1,\ldots, A_r)$ one has
\[
\ell_{(\Pi^{i-1})^\bot}(A_1,\ldots, A_r) \leq C(\xi)\sum_{j=i}^n \ell_{x_j}(A_1,\ldots, A_r),
\]
where, as usual,
$x_j= \mathrm{span}\, \{\nu^{\alpha_j}\}$, $j=i,\ldots, n$, and
$\Pi^{i-1}=(\mathrm{span}\, \{\nu^{\alpha_j}\}_{j=i}^n)^\bot$.
\end{lemma}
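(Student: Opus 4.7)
The plan is to reduce this lemma to a direct application of Lemma~\ref{lm_scontr_estproj1} to each consecutive displacement $A_{k+1}-A_k$, followed by summation in $k$, exploiting linearity of orthogonal projections. Concretely, I would take as target subspace $\Pi:=(\Pi^{i-1})^\bot=\mathrm{span}\,\{\nu^{\alpha_j}\}_{j=i}^n$, which has dimension $k=n-i+1$, and feed the vectors $\nu^{\alpha_i},\nu^{\alpha_{i+1}},\ldots,\nu^{\alpha_n}$ (in this order) into Lemma~\ref{lm_scontr_estproj1}, so that the corresponding one-dimensional subspaces coincide with $x^i,\ldots,x^n$.

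Next I would check that the angle condition~\eqref{eq_sc_angxi1} required by Lemma~\ref{lm_scontr_estproj1} is exactly what admissibility gives. With the reindexing above, the hypothesis reads
\[
\widehat{\bigl(x^m,(\mathrm{span}\,\{\nu^{\alpha_k}\}_{k=m+1}^n)^\bot\bigr)}\leq \xi\qquad\text{for } m=i,\ldots,n-1,
\]
and this is precisely condition~\eqref{eq_scontr_admpyram1} for the admissible tuple $(\P_{\alpha_{i-1}},\ldots,\P_{\alpha_n})$ at indices $j=i+1,\ldots,n$ (the extra admissibility at $j=i$, i.e.\ $\widehat{(\nu^{\alpha_{i-1}},\Pi^{i-1})}<\xi$, is not needed here and is simply discarded). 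Lemma~\ref{lm_scontr_estproj1} therefore yields, with $\zeta=\xi$ and the constant $C(\xi)$ from~\eqref{eq_scontr_estproj1b}, the pointwise estimate
\[
|p_\Pi(A)|\leq C(\xi)\sum_{j=i}^n |p_{x^j}(A)|\qquad\text{for every }A\in\R^n.
\]

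Finally, I would apply this pointwise inequality to the displacement vectors $A:=A_{k+1}-A_k$. Linearity of the projections gives $p_\Pi(A_{k+1})-p_\Pi(A_k)=p_\Pi(A_{k+1}-A_k)$ and similarly for each $p_{x^j}$, so summing over $k=1,\ldots,r-1$ and swapping the order of summation yields
\[
\ell_{(\Pi^{i-1})^\bot}(A_1,\ldots,A_r)=\sum_{k=1}^{r-1}|p_\Pi(A_{k+1}-A_k)|\leq C(\xi)\sum_{j=i}^n\sum_{k=1}^{r-1}|p_{x^j}(A_{k+1}-A_k)|=C(\xi)\sum_{j=i}^n\ell_{x^j}(A_1,\ldots,A_r),
\]
which is the claim. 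No genuine obstacle is expected: this is a bookkeeping lemma that simply repackages Lemma~\ref{lm_scontr_estproj1} in the variation setting, and the only point to be careful about is matching the indexing of the admissibility condition to the hypothesis of Lemma~\ref{lm_scontr_estproj1}.
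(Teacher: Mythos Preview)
Your proposal is correct and follows essentially the same route as the paper: apply Lemma~\ref{lm_scontr_estproj1} with $\Pi:=(\Pi^{i-1})^\bot$, $k:=n-i+1$, $\nu_j:=\nu^{\alpha_j}$ for $j=i,\ldots,n$, and $\zeta:=\xi$, verify that the angle hypothesis~\eqref{eq_sc_angxi1} is furnished by the admissibility condition~\eqref{eq_scontr_admpyram1} (using $\delta<\bar\delta$), and then plug $A:=A_{m+1}-A_m$ into the resulting pointwise bound and sum over $m$. Your observation that the admissibility condition at $j=i$ (involving $\nu^{\alpha_{i-1}}$) is superfluous here is accurate and matches the paper's usage.
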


\begin{proof}
Applying Lemma~\ref{lm_scontr_estproj1} with
$\Pi:=(\Pi^{i-1})^\bot$, 
 $k:=n-i+1$,
 $\nu_j:=\nu^{\alpha_j}$, $j=i,\ldots,n$ and $\zeta:=\xi$ (the conditions of Lemma~\ref{lm_scontr_estproj1} are satisfied
in view of~\eqref{eq_scontr_admpyram1}, recalling that $\delta <\bar\delta$), we get
\[
|p_{(\Pi^{i-1})^\bot}(A)| \leq C(\xi) \sum_{j=i}^n |p_{x^j}(A)|
\]
for every $A\in \R^n$, and plugging in the latter inequality $A:=A_{m+1}-A_m$, $m=1,\ldots, r-1$, and summing over such $m$,
we get the claim.
\end{proof}

\begin{lemma}\label{lm_scontr_vert1a}
Let $(A_1,\ldots, A_r)$
be such that for some $\nu\in \R^n$, denoting by $x_k:= (A_k-A_r)\cdot \nu$, one has
$x_k<x_{k-1}$ whenever $\widehat{((A_{k-1}A_k), \nu^\bot)}>\delta$,
Then
\begin{equation}\label{eq_scontr_estv1_base0j-tan}
\ell_{x}(A_1,\ldots, A_r) \leq  |A_1A_r| + 2 \ell_{x^\bot}(A_1,\ldots, A_r)\tan\delta,
\end{equation}
where $x=\mathrm{span}\,\{\nu\}$.
\end{lemma}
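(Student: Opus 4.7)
The plan is to split the $r-1$ segments $[A_{k-1}A_k]$, $k=2,\ldots, r$, into two classes according to their orientation relative to $\nu^\bot$: the vertical ones $V := \{k : \widehat{((A_{k-1}A_k), \nu^\bot)}>\delta\}$ and the horizontal ones $H := \{k : \widehat{((A_{k-1}A_k), \nu^\bot)}\leq \delta\}$, and to estimate their contributions to $\ell_x(A_1,\ldots,A_r)$ separately. Since both the hypothesis and the conclusion are invariant under rescaling of $\nu$, I would assume without loss of generality that $|\nu|=1$, so that $\ell_x(A_1,\ldots,A_r)=\sum_{k=2}^r|x_k-x_{k-1}|$ and in particular $|x_1-x_r|=|(A_1-A_r)\cdot\nu|\leq |A_1A_r|$ by Cauchy--Schwarz.

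For $k\in H$, elementary trigonometry gives $|x_k-x_{k-1}|\leq |A_{k-1}A_k|\sin\delta$ while $|p_{x^\bot}(A_k)-p_{x^\bot}(A_{k-1})|\geq |A_{k-1}A_k|\cos\delta$, so that
\[
\sum_{k\in H}|x_k-x_{k-1}|\leq \tan\delta\sum_{k=2}^{r}|p_{x^\bot}(A_k)-p_{x^\bot}(A_{k-1})|=\tan\delta\cdot\ell_{x^\bot}(A_1,\ldots,A_r).
\]
For $k\in V$ the hypothesis forces $x_{k-1}-x_k>0$, hence $|x_k-x_{k-1}|=x_{k-1}-x_k$. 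Telescoping $\sum_{k=2}^r(x_{k-1}-x_k)=x_1-x_r$ and rearranging yields
\[
\sum_{k\in V}|x_k-x_{k-1}|=(x_1-x_r)-\sum_{k\in H}(x_{k-1}-x_k)\leq |A_1A_r|+\sum_{k\in H}|x_k-x_{k-1}|.
\]
Adding the two displays produces exactly the desired inequality $\ell_x(A_1,\ldots,A_r)\leq |A_1A_r|+2\tan\delta\cdot\ell_{x^\bot}(A_1,\ldots,A_r)$.

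I do not anticipate any serious obstacle: the argument is essentially bookkeeping that isolates the monotone part of the $\nu$-projection (contributed by vertical segments, where the sign hypothesis applies directly) from its drift part (contributed by horizontal segments, whose displacement along $\nu$ is automatically a small multiple of their displacement in $\nu^\bot$, since they lie within angle $\delta$ of that subspace). The factor of $2$ in the conclusion arises because the horizontal contribution enters once in its own right and once more as the error term when telescoping the vertical contributions.
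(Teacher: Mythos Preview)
Your argument is correct and essentially coincides with the paper's proof: both telescope $x_1-x_r=\sum(x_{k-1}-x_k)$, isolate the terms where $x_k$ can increase, and bound those by $\tan\delta$ times the $\nu^\bot$-displacement. The only cosmetic difference is that the paper partitions by the sign of $x_k-x_{k-1}$ rather than by your $H$/$V$ split (noting that $x_k>x_{k-1}$ forces $k\in H$ by contraposition of the hypothesis), which leads to the same inequality with the same factor~$2$.
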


\begin{proof}
For $j\in \{2, \ldots, n\}$ we have
\begin{align*}
    x_1-x_r &=-\sum_{k=2}^r (x_k-x_{k-1})= -\sum_{k\in \{2,\ldots, r\}, x_k>x_{k-1}} |x_k-x_{k-1}| +
    \sum_{k\in \{2,\ldots, r\}, x_k<x_{k-1}} |x_k-x_{k-1}| \\
    &= -\ell_{x}(A_1,\ldots, A_r) + 2\sum_{k\in \{2,\ldots, r\}, x_k>x_{k-1}} |x_k-x_{k-1}|,
\end{align*}
so that
\begin{align*}
    \ell_{x}(A_1,\ldots, A_r) &\leq |x_r-x_1| + 2\sum_{k\in \{2,\ldots, r\}, x_k>x_{k-1}} |x_k-x_{k-1}|\\
       &\leq |x_r-x_1| + 2\sum_{k\in \{2,\ldots, r\}, x_k>x_{k-1}} |p_{x^\bot}(A_k)-p_{x^\bot}(A_{k-1})|\tan \delta.
\end{align*}
Thus,
\begin{align*}
\ell_{x}(A_1,\ldots, A_r) &\leq  |x_r-x_1| + 2\ell_{x^\bot}(A_1,\ldots, A_r)\tan\delta\\
 &\leq  |A_1A_r| + 2\ell_{x^\bot}(A_1,\ldots, A_r)\tan\delta
\end{align*}
proving the claim.
\end{proof}

We will also need the following easy calculation.

\begin{lemma}\label{lm_scontr_sumLj1}
If for some $i\in \{1,\ldots, n\}$ one has
\begin{equation}\label{eq_sumLj1}
L_j\leq L_0 + C\sum_{k=i}^n L_k, \quad j=i, \ldots, n,
\end{equation}
with some $C\in (0,(n-i+1)^{-1}/2)$, then
$\sum_{j=i}^n L_j \leq 2(n-i+1)L_0$ and
$L_j\leq 2 L_0$ for all $j=i,\ldots, n$.
\end{lemma}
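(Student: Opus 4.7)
The plan is to sum the given family of inequalities and solve for $S:=\sum_{j=i}^n L_j$, then feed the resulting bound on $S$ back into each individual inequality. Set $m:=n-i+1$, so that the hypothesis $C\in (0,(n-i+1)^{-1}/2)$ reads $mC<1/2$, and in particular $1-mC>1/2>0$.

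Summing the $m$ inequalities $L_j\leq L_0+C\sum_{k=i}^n L_k$ over $j=i,\ldots,n$ yields
\[
S\leq mL_0+mCS,
\]
so $(1-mC)S\leq mL_0$, and dividing by $1-mC>1/2$ gives $S\leq mL_0/(1-mC)<2mL_0$, which is the first claim. For the second claim, substitute this bound on $S$ back into the original inequality: for each $j\in\{i,\ldots,n\}$,
\[
L_j\leq L_0+CS\leq L_0+C\cdot 2mL_0=(1+2mC)L_0<2L_0,
\]
again using $2mC<1$. No step poses a genuine obstacle; the only point requiring attention is to verify that $1-mC$ is bounded away from $0$ (hence the division is legal) and that the constant $2mC$ is strictly less than $1$, both of which follow directly from the hypothesis $C<1/(2m)$.
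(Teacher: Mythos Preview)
Your proof is correct and follows essentially the same route as the paper's: sum the $m=n-i+1$ inequalities to obtain $(1-mC)S\le mL_0$, use $mC<1/2$ to deduce $S\le 2mL_0$, and substitute back to get $L_j\le 2L_0$. The only cosmetic difference is that you track strict inequalities while the paper states the non-strict ones.
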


\begin{proof}
Summing~\eqref{eq_sumLj1} over $j=i, \ldots, n$, we obtain
\[
\sum_{j=i}^n L_j\leq (n-i+1) L_0 + (n-i+1)C\sum_{j=i}^n L_j,
\]
so that in view of $(n-i+1)C\leq 1/2$ one has
\[
\sum_{j=i}^n L_j\leq 2(n-i+1) L_0,
\]
and hence the statement is proven by plugging the latter relationship
into~\eqref{eq_sumLj1}.
\end{proof}

\subsubsection{Estimate of the variation of $(A_1,\ldots, A_r)$ along each $x^j$, $j=2,\ldots, n$}.

\begin{lemma}\label{lm_scontr_vert1}
Assume that
\begin{equation}\label{eq_sc_vert1a}
\delta\in \left(0, \bar\delta\wedge \arctan \frac{1}{4(n-1)C(\xi)}\right)
\end{equation}
(this is true in particular when $\delta$ is as in Lemma~\ref{lm_scontr_base0}),
the $n$-tuple $(\P_{\alpha_1},\ldots, \P_{\alpha_n})$ with
$\alpha_j\in \{1,\ldots, N\}$, $j=1, \ldots, n$, and
the vector
$(A_1,\ldots, A_r, A_{r+1}, \ldots A_{r+n-1})\subset \R^n$
satisfy conditions of Lemma~\ref{lm_scontr_base0}. Then one has
\begin{equation}\label{eq_scontr_estv1_base0j}
\ell_{x^j}(A_1,\ldots, A_r) \leq  2  |A_1A_r| + 4\ell_{x^1}(A_1,\ldots, A_r)\tan\delta, \quad j=2,\ldots, n.
\end{equation}
\end{lemma}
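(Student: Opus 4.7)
Fix $j \in \{2, \ldots, n\}$. The plan is to combine self-contractedness with the containment $(A_1, \ldots, A_r) \subset \P_{\alpha_j} + A_{r+n-j}$ to obtain monotonicity of the $x^j$-coordinate along ``vertical'' segments, turn this into a one-sided bound on $\ell_{x^j}$ via Lemma~\ref{lm_scontr_vert1a}, and then close the resulting coupled system of inequalities using Lemma~\ref{lm_scontr_sumLj1}. Since $0 \in \P_{\alpha_j}$, the subvector $(A_1, \ldots, A_r, A_{r+n-j})$ is self-contracted and contained in $\P_{\alpha_j} + A_{r+n-j}$; hence, as $\delta < \bar\delta$, Lemma~\ref{lm_scontr_epsconvA1Ar} yields $(A_{k-1} - A_k) \cdot \nu^{\alpha_j} > 0$ for every $k \in \{2, \ldots, r\}$ with $\widehat{((A_{k-1} A_k), (\nu^{\alpha_j})^\bot)} > \delta$, i.e.\ the $x^j$-coordinate strictly decreases along each such segment.

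With this monotonicity, Lemma~\ref{lm_scontr_vert1a} applied with $\nu := \nu^{\alpha_j}$ gives
\[
\ell_{x^j}(A_1, \ldots, A_r) \leq |A_1 A_r| + 2\tan\delta \cdot \ell_{(x^j)^\bot}(A_1, \ldots, A_r).
\]
To estimate $\ell_{(x^j)^\bot}$, I use the \emph{orthogonal} decomposition $(x^j)^\bot = x^1 \oplus W$ with $W := (x^j)^\bot \cap (x^1)^\bot$: this gives $|p_{(x^j)^\bot}(v)| \leq |p_{x^1}(v)| + |p_W(v)|$ for every $v \in \R^n$. Since $W \subset (x^1)^\bot = \mathrm{span}\{\nu^{\alpha_k}\}_{k=2}^n$, Lemma~\ref{lm_scontr_estproj1} applied to the $(n-1)$-dimensional subspace $(x^1)^\bot$ with directions $\nu^{\alpha_2}, \ldots, \nu^{\alpha_n}$ (whose nested-angle bounds come directly from admissibility~\eqref{eq_scontr_admpyram1} with $\zeta = \xi$) yields $|p_W(v)| \leq |p_{(x^1)^\bot}(v)| \leq C(\xi) \sum_{k=2}^n |p_{x^k}(v)|$. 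Summing over consecutive differences gives
\[
\ell_{x^j} \leq |A_1A_r| + 2\tan\delta \cdot \ell_{x^1} + 2 C(\xi) \tan\delta \sum_{k=2}^n \ell_{x^k}.
\]

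This is a coupled system in $L_j := \ell_{x^j}$, $j = 2, \ldots, n$, of the form $L_j \leq L_0 + C \sum_{k=2}^n L_k$ with $L_0 := |A_1A_r| + 2\tan\delta \cdot \ell_{x^1}$ and $C := 2 C(\xi) \tan\delta$. The hypothesis~\eqref{eq_sc_vert1a} translates exactly to $C \leq 1/(2(n-1))$, so Lemma~\ref{lm_scontr_sumLj1} concludes $\ell_{x^j} \leq 2 L_0 = 2 |A_1A_r| + 4 \tan\delta \cdot \ell_{x^1}$, which is~\eqref{eq_scontr_estv1_base0j}. The delicate point is the third step: one must split $(x^j)^\bot$ by taking $x^1$ as a direct summand rather than lumping all $n$ axes together in a single application of Lemma~\ref{lm_scontr_estproj1}, so that $\ell_{x^1}$ enters the final bound with the clean coefficient $4\tan\delta$ (no extra $C(\xi)$); this is possible precisely because $x^1$ is orthogonal to both $x^j$ and $W$, while the oblique directions $\nu^{\alpha_k}$ ($k \neq 1$) are absorbed into the sum that is ultimately closed off by Lemma~\ref{lm_scontr_sumLj1}.
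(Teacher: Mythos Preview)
Your proof is correct and follows essentially the same route as the paper: invoke Lemma~\ref{lm_scontr_epsconvA1Ar} to verify the monotonicity hypothesis of Lemma~\ref{lm_scontr_vert1a}, bound $\ell_{(x^j)^\bot}$ by $\ell_{x^1}$ plus $C(\xi)\sum_{k=2}^n \ell_{x^k}$, and close the resulting system with Lemma~\ref{lm_scontr_sumLj1}. The only cosmetic difference is that the paper reaches the intermediate bound via the cruder chain $\ell_{(x^j)^\bot}\leq \ell \leq \ell_{x^1}+\ell_{(x^1)^\bot}$ and then Lemma~\ref{lm_scontr_estvarPi-i0}, whereas you use the orthogonal splitting $(x^j)^\bot = x^1\oplus W$ directly; both yield the same inequality and the same constants.
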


\begin{proof}
Note first that in the particular case when $\delta$ is as in Lemma~\ref{lm_scontr_base0}, then
the trivial inequality $3\cos\varepsilon_1+8 
\geq (\sin\varepsilon_1)/2$ (with $\varepsilon_1>0$ defined by Lemma~\ref{lm_scontr_horiz1}) implies
$3\cot\varepsilon_1+ 8/\sin\varepsilon_1\geq 1/2$,
and therefore
$\delta$ satisfies~\eqref{eq_sc_vert1a}.

For a generic $\delta$ satisfying~\eqref{eq_sc_vert1a},
since
$(A_1,\ldots, A_r)\subset \P_{\alpha_j} +A_{r+n-j}$ for $j\in \{2,\ldots, n\}$ as requested by Lemma~\ref{lm_scontr_base0}, then $(A_1,\ldots, A_r)$ satisfies
conditions of Lemma~\ref{lm_scontr_vert1a} (with $\nu^{\alpha_j}$ instead of $\nu$) in view of Lemma~\ref{lm_scontr_epsconvA1Ar}, and hence by Lemma~\ref{lm_scontr_vert1a} one has
\begin{equation}\label{eq_scontr_estxj1}
\begin{aligned}
\ell_{x^j}(A_1,\ldots, A_r)
 &\leq  |A_1A_r| + 2\ell_{(x^j)^\bot}(A_1,\ldots, A_r)\tan\delta\leq |A_1A_r| + 2\ell(A_1,\ldots, A_r)\tan\delta\\
&\leq |A_1A_r| + 2\ell_{x^1}(A_1,\ldots, A_r)\tan\delta + 2 \ell_{(x^1)^\bot}(A_1,\ldots, A_r)\tan\delta.
\end{aligned}
\end{equation}
By Lemma~\ref{lm_scontr_estvarPi-i0} with $i:=2$, we get
\[
\ell_{(x^1)^\bot}(A_1,\ldots, A_r) \leq C(\xi)\sum_{k=2}^n \ell_{x_k}(A_1,\ldots, A_r).
\]
Thus~\eqref{eq_scontr_estxj1} becomes
\begin{equation}\label{eq_scontr_estxj2}
\begin{aligned}
\ell_{x^j}(A_1,\ldots, A_r)
&\leq |A_1A_r| + 2\ell_{x^1}(A_1,\ldots, A_r)\tan\delta + 2\tan\delta C(\xi)\sum_{k=2}^n \ell_{x_k}(A_1,\ldots, A_r).
\end{aligned}
\end{equation}
We may now apply Lemma~\ref{lm_scontr_sumLj1} with
\begin{align*}
L_0& :=|A_1A_r| + 2\ell_{x^1}(A_1,\ldots, A_r)\tan\delta,\\
L_j &:=\ell_{x_j}(A_1,\ldots, A_r),
\end{align*}
 $i:=2$ and $C:=2\tan\delta C(\xi)$ (recalling $\tan\delta <1/(4(n-1)C(\xi))$)
to get the claim.
\end{proof}

\subsubsection{Estimate of the variation of $(A_1,\ldots, A_r)$ along $x^1$}

Everywhere in this subsection we denote for brevity of notation $A_j^\bot:= p_{(x^1)^\bot}(A_j)$.

\begin{lemma}\label{lm_scontr_horiz4}
Assume that the vector $(A_1,\ldots, A_r)$
is self-contracted with respect to
the norm $\|\cdot\|$.
Then for an arbitrary line $x^1$ one has
\begin{equation}\label{eq_scontr_estv1_1a}
\ell_{x^1}(A_1,\ldots, A_r)\leq C_1  \ell_{(x^1)^\bot}(A_1,\ldots, A_r) +  C_2 |A_1A_r|,
\end{equation}
with some positive constants $C_1$ and $C_2$  depending only on the norm $\|\cdot\|$, and in particular, one can take
$C_1:= 3\cot\varepsilon_1+ 8/\sin\varepsilon_1$, where $\varepsilon_1>0$ is defined by Lemma~\ref{lm_scontr_horiz1},
and $C_2:=\diam \B$, where $\B$ is the closed unit ball of $\|\cdot\|$.
\end{lemma}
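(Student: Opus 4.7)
The strategy is to split each segment $[A_kA_{k+1}]$ into two classes according to whether it is $\varepsilon_1$-horizontal or $\varepsilon_1$-vertical with respect to the line $x^1$, and to estimate the $x^1$-contribution of each class separately. This parallels the idea of Step~2 of the proof of the easy case in Section~\ref{sec_sc_easy1}.

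First I would reduce to the situation where the sequence $(A_k^1)_{k=1}^r$ of $x^1$-projections alternates strictly in sign, by passing to the subvector obtained by iteratively discarding interior vertices of maximal same-direction runs in $x^1$, while keeping $A_1$ and $A_r$ as endpoints. Such a reduction preserves $\ell_{x^1}$ (because sums of same-signed numbers telescope under absolute value), does not increase $\ell_{(x^1)^\bot}$ (by triangle inequality applied to projections), preserves self-contraction (any subvector of a self-contracted vector is self-contracted), and achieves strict alternation. So one may assume strict alternation.

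Next, partition $\{1,\ldots,r-1\}$ into the set $V$ of vertical indices ($\widehat{((A_kA_{k+1}),x^1)}>\varepsilon_1$) and its complement $H$. For $k\in V$, elementary trigonometry gives
\[
|A^1_{k+1}-A^1_k|=|A_kA_{k+1}|\cos\widehat{((A_kA_{k+1}),x^1)}\leq \cot\varepsilon_1\cdot |A^\bot_{k+1}-A^\bot_k|,
\]
so the total vertical contribution to $\ell_{x^1}$ is at most $\cot\varepsilon_1\cdot\ell_{(x^1)^\bot}$. For the horizontal indices I group them into maximal consecutive runs $(A_k,A_{k+1},\ldots,A_{k+m})$ whose intermediate segments are all horizontal. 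Each such run is a self-contracted subvector whose $x^1$-projections strictly alternate and whose segments are $\varepsilon_1$-horizontal, so Corollary~\ref{co_scontr_horiz2} yields $\ell(\mathrm{run})\leq 4|A_kA_{k+1}|$ and hence $\ell_{x^1}(\mathrm{run})\leq 4|A_kA_{k+1}|$.

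It remains to estimate the leading segment $|A_kA_{k+1}|$ of each run. When $k=1$ (possible for at most one run) Remark~\ref{rm_sc1subvec2} gives $|A_1A_2|\leq\diam\B\cdot|A_1A_r|$, producing the $C_2|A_1A_r|$ term. When $k>1$, the preceding segment $[A_{k-1}A_k]$ is necessarily vertical by maximality of the run, and I apply Lemma~\ref{lm_scontr_horiz1} to the self-contracted triple $(A_{k-1},A_k,A_{k+1})$: if $\angle A_{k-1}A_kA_{k+1}\leq 2\varepsilon_1$ then $|A_kA_{k+1}|\leq (3/4)|A_{k-1}A_k|$; if the angle is larger, a direct trigonometric estimate using the self-contracted norm inequality $\|A_kA_{k+1}\|\leq\|A_{k-1}A_{k+1}\|$ together with the law-of-sines-type lower bound $|A_{k-1}A_{k+1}|\geq|A_kA_{k+1}|\sin\angle A_{k-1}A_kA_{k+1}$ gives $|A_kA_{k+1}|\leq C|A_{k-1}A_k|$ with $C$ depending only on the norm. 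Because $[A_{k-1}A_k]$ is vertical we have $|A_{k-1}A_k|\leq|A^\bot_{k-1}-A^\bot_k|/\sin\varepsilon_1$, and the segments $[A_{k-1}A_k]$ for distinct runs are pairwise disjoint, so their $(x^1)^\bot$-projections sum to at most $\ell_{(x^1)^\bot}$. Adding the vertical and horizontal contributions and collecting constants delivers $\ell_{x^1}\leq (3\cot\varepsilon_1+8/\sin\varepsilon_1)\ell_{(x^1)^\bot}+\diam\B\cdot|A_1A_r|$.

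The principal technical obstacle is the obtuse-angle subcase $\angle A_{k-1}A_kA_{k+1}>2\varepsilon_1$, where Lemma~\ref{lm_scontr_horiz1} does not directly apply and the required bound $|A_kA_{k+1}|\lesssim|A_{k-1}A_k|$ must be extracted from the norm self-contracted inequality combined with purely geometric input using the angle and the vertical/horizontal dichotomy; this is precisely where the constant $8/\sin\varepsilon_1$ (rather than simply $8\cot\varepsilon_1$) enters.
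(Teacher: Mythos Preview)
Your overall architecture---reduce to alternating $x^1$-projections, split into vertical and horizontal pieces, control horizontal runs by their leading segment---is close in spirit to the paper's argument and to Step~2 of Section~\ref{sec_sc_easy1}. However, the step you flag as the ``principal technical obstacle'' is a genuine gap, not just a bookkeeping nuisance.

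The claimed bound $|A_kA_{k+1}|\leq C\,|A_{k-1}A_k|$ when $[A_{k-1}A_k]$ is vertical, $[A_kA_{k+1}]$ is horizontal and $\angle A_{k-1}A_kA_{k+1}>2\varepsilon_1$ is simply false. Take the Euclidean norm and, for large $L$, set $A_{k-1}=(0,0)$, $A_k=(\delta,1)$ with $\delta=1/(3L)$, $A_{k+1}=(-L,1)$. Then $[A_{k-1}A_k]$ is nearly vertical, $[A_kA_{k+1}]$ is nearly horizontal, the $x^1$-projections alternate, the triple is self-contracted (one checks $|A_kA_{k+1}|\le|A_{k-1}A_{k+1}|$), the angle at $A_k$ is close to $\pi/2$, yet $|A_kA_{k+1}|/|A_{k-1}A_k|\approx L$ is unbounded. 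Your ``law-of-sines'' inequality $|A_{k-1}A_{k+1}|\geq |A_kA_{k+1}|\sin\angle$ and the self-contracted inequality both compare $|A_kA_{k+1}|$ with $|A_{k-1}A_{k+1}|$, not with $|A_{k-1}A_k|$; neither yields the bound you need.

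The paper avoids this by a different decomposition. Instead of merely passing to an alternating subvector, it extracts maximal blocks $(A_{q_i},\ldots,A_{k_i})$ satisfying the hypotheses of Lemma~\ref{lm_scontr_hordir1} (each $[A_jA_{k_i}]$ horizontal, all horizontal consecutive segments same-directed), so that the \emph{residual} subvector satisfies the hypotheses of Lemma~\ref{lm_scontr_horiz3}: whenever a horizontal segment is preceded by a vertical one, the chord $[A_{k-1}A_{k+1}]$ is \emph{also} vertical. Under that extra verticality one bounds
\[
|A_kA_{k+1}|\le |A_{k-1}A_k|+|A_{k-1}A_{k+1}|\le \tfrac{1}{\sin\varepsilon_1}\bigl(|A_{k-1}^\bot A_k^\bot|+|A_{k-1}^\bot A_{k+1}^\bot|\bigr),
\]
i.e.\ by perpendicular projections of \emph{two} segments rather than by $|A_{k-1}A_k|$ alone; summing these gives the $8/\sin\varepsilon_1$ contribution to $C_1$. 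In the counterexample above $[A_{k-1}A_{k+1}]$ is horizontal, so the paper's construction would not place this triple in the residual subvector---it would be absorbed into a Lemma~\ref{lm_scontr_hordir1} block instead. Your alternating reduction does not enforce this chord-verticality, which is why your argument breaks at exactly this point.
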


\begin{proof}
Let $\varepsilon_1>0$ be as in Lemma~\ref{lm_scontr_horiz1}.
Starting from the last  line segment $[A_{k-1}A_k]$ $\varepsilon_1$-horizontal with respect to $x^1$ (i.e.\ with maximum $k\in  \{2,\ldots, r\}$
such that $[A_{k-1}A_k]$ is horizontal)
we find the minimum $q\in \{1, k-1\}$ such that
$(A_q, A_{q+1}, \ldots, A_k)$ satisfies conditions of Lemma~\ref{lm_scontr_hordir1} (with $x^1$ instead of $x$ and $\varepsilon:=\varepsilon_1$). Now we repeat the same operation
with  $q-1$
instead of $r$, and continue in this way by backward induction as far as possible. In this way we find a
finite sequence of disjoint intervals $\{q_i,q_i+1,\ldots, k_i\}_{i=1}^{\nu}$ of $\{1,\ldots, r\}$
such that $(A_{q_i}, A_{q_i+1}, \ldots, A_{k_i})$ satisfies conditions of Lemma~\ref{lm_scontr_hordir1} (again with $x^1$ instead of $x$ and $\varepsilon:=\varepsilon_1$), while the
subvector
\[
(A_j)_{j\in \Lambda}
\subset (A_1,\ldots, A_r)
\]
obtained from $(A_1,\ldots, A_r)$ by canceling all the points $A_j$ with $j\in \cup_{i=1}^{\nu}\{q_i+1,\ldots, k_i-1\}$,
satisfies conditions of Lemma~\ref{lm_scontr_horiz3}.
Therefore, denoting for convenience $(\tilde{A}_1,\ldots, \tilde{A}_\rho):=(A_j)_{j\in \Lambda}$, we get
\begin{align*}
    \ell_{x^1}(A_1,\ldots, A_r) & = \ell_{x^1}(\tilde{A}_1,\ldots, \tilde{A}_\rho) - \sum_{i=1}^{\nu} |A_{q_i}^1A_{k_i}^1| + \sum_{i=1}^{\nu} \ell_{x^1}\left(\left(A_j\right)_{j=q_i}^{k_i}\right)\\
&\leq \ell_{x^1}(\tilde{A}_1,\ldots, \tilde{A}_\rho) \\
&\quad +2\cot\varepsilon_1 \sum_{i=1}^{\nu} \ell_{(x^1)^\bot}\left(\left(A_j\right)_{j=q_i}^{k_i}\right)\quad\mbox{by Lemma~\ref{lm_scontr_hordir1} with $x^1$ instead of $x$}\\
& \leq C  \ell_{(x^1)^\bot}(\tilde{A}_1,\ldots, \tilde{A}_\rho) +  |\tilde{A}_1\tilde{A}_2| \\
& \qquad\qquad\qquad + 2\cot\varepsilon_1 \sum_{i=1}^{\nu} \ell_{(x^1)^\bot}\left(\left(A_j\right)_{j=q_i}^{k_i}\right)\quad\mbox{by Lemma~\ref{lm_scontr_horiz3}}\\
&\leq C  \ell_{(x^1)^\bot}(A_1,\ldots, A_r) +  |\tilde{A}_1\tilde{A}_2| + 2\ell_{(x^1)^\bot}(A_1,\ldots, A_r)\cot\varepsilon_1\\
&= (C +2\cot\varepsilon_1) \ell_{(x^1)^\bot}(A_1,\ldots, A_r) +  |\tilde{A}_1\tilde{A}_2|,
\end{align*}
which concludes the proof (up to setting $C_1:=C +2\cot\varepsilon_1$, $C_2:=\diam \B$, where $\B$ is the closed unit ball of $\|\cdot\|$, and recalling that by Lemma~\ref{lm_scontr_horiz3} one can take $C:=\cot\varepsilon_1 +8/\sin\varepsilon_1$), because $|\tilde{A}_1\tilde{A}_2|\leq C_2 |A_1A_r|$ by Remark~\ref{rm_sc1subvec2} (since
$(\tilde{A}_1,\ldots, \tilde{A}_\rho)\subset (A_1,\ldots, A_r)$).
\end{proof}

\begin{corollary}\label{co_scontr_horiz4a}
Under conditions of Lemma~\ref{lm_scontr_base0} one has
\begin{equation}\label{eq_scontr_estv1_2base}
\ell_{x^1}(A_1,\ldots, A_r)\leq C_1  \sum_{i=2}^n \ell_{x^i}(A_1,\ldots, A_r) +  C_2 |A_1A_r|,
\end{equation}
with some positive constants $C_1$ depending only on $\|\cdot\|$, $n$ and $\xi$, and $C_2$  depending only on $\|\cdot\|$.
In particular, one can take
$C_1:= (3\cot\varepsilon_1+ 8/\sin\varepsilon_1)C(\xi)$, where $\varepsilon_1>0$ is defined by Lemma~\ref{lm_scontr_horiz1},
and $C_2:=\diam \B$, $\B$ standing for the closed unit ball of $\|\cdot\|$.
\end{corollary}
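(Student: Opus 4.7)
The plan is to combine Lemma~\ref{lm_scontr_horiz4} with Lemma~\ref{lm_scontr_estvarPi-i0}, both of which are already at our disposal, to convert the estimate in terms of $\ell_{(x^1)^\bot}$ into one in terms of the sum of the variations along each coordinate axis $x^j$ for $j=2,\ldots,n$.

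First I would invoke Lemma~\ref{lm_scontr_horiz4}, which applies since $(A_1,\ldots,A_r)$ is self-contracted with respect to $\|\cdot\|$ (this is part of the hypotheses of Lemma~\ref{lm_scontr_base0} that we inherit). This yields
\[
\ell_{x^1}(A_1,\ldots,A_r) \leq C_1' \, \ell_{(x^1)^\bot}(A_1,\ldots,A_r) + C_2 |A_1 A_r|,
\]
where $C_1' := 3\cot\varepsilon_1 + 8/\sin\varepsilon_1$ and $C_2 := \diam \B$.

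Next, I would observe that by the specific choice of the coordinate system in the setup preceding the corollary, the axis $x^1$ is precisely the orthogonal complement of $\mathrm{span}\{\nu^{\alpha_j}\}_{j=2}^n$, i.e.\ in the notation of Lemma~\ref{lm_scontr_estvarPi-i0}, $x^1 = \Pi^1$ (taking $i = 2$ there). Since $(\P_{\alpha_1},\ldots,\P_{\alpha_n})$ is admissible by the assumptions of Lemma~\ref{lm_scontr_base0}, in particular the $(n-i+2)$-tuple $(\P_{\alpha_{i-1}},\ldots,\P_{\alpha_n})$ with $i=2$ is admissible, and the condition $\delta \in (0,\bar\delta)$ required by Lemma~\ref{lm_scontr_estvarPi-i0} holds by the choice of $\delta$ in Lemma~\ref{lm_scontr_base0}. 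Therefore Lemma~\ref{lm_scontr_estvarPi-i0} with $i := 2$ gives
\[
\ell_{(x^1)^\bot}(A_1,\ldots,A_r) \leq C(\xi)\sum_{j=2}^n \ell_{x^j}(A_1,\ldots,A_r).
\]

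Finally, I would plug this estimate into the inequality from Lemma~\ref{lm_scontr_horiz4} to obtain
\[
\ell_{x^1}(A_1,\ldots,A_r) \leq C_1' \, C(\xi)\sum_{j=2}^n \ell_{x^j}(A_1,\ldots,A_r) + C_2 |A_1 A_r|,
\]
which is exactly~\eqref{eq_scontr_estv1_2base} with the explicit constants $C_1 = (3\cot\varepsilon_1 + 8/\sin\varepsilon_1)C(\xi)$ and $C_2 = \diam\B$ as claimed. There is no real obstacle here; the corollary is an immediate composition of two previously established estimates, with Lemma~\ref{lm_scontr_horiz4} converting $\ell_{x^1}$ into a bound involving $\ell_{(x^1)^\bot}$ and Lemma~\ref{lm_scontr_estvarPi-i0} decomposing the latter along the (possibly non-orthogonal) axes $x^2,\ldots,x^n$ while paying only the factor $C(\xi)$ dictated by the angle bound~\eqref{eq_scontr_admpyram1}.
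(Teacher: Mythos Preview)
Your proof is correct and follows essentially the same approach as the paper: apply Lemma~\ref{lm_scontr_estvarPi-i0} with $i:=2$ to bound $\ell_{(x^1)^\bot}$ by $C(\xi)\sum_{j=2}^n \ell_{x^j}$, and plug this into the estimate~\eqref{eq_scontr_estv1_1a} from Lemma~\ref{lm_scontr_horiz4}. Your verification of the hypotheses (admissibility of the tuple and $\delta<\bar\delta$) is slightly more detailed than the paper's terse version, but the argument is the same.
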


\begin{proof}
By Lemma~\ref{lm_scontr_estvarPi-i0} with $i:=2$, one has
\[
\ell_{(x^1)^\bot}(A_1,\ldots, A_r) \leq C(\xi)\sum_{k=2}^n \ell_{x_k}(A_1,\ldots, A_r).
\]
Plugging this estimate into the inequality~\eqref{eq_scontr_estv1_1a} from Lemma~\ref{lm_scontr_horiz4}, one gets the result.
\end{proof}

The following lemmata have been used in the proof of the above Lemma~\ref{lm_scontr_horiz4}.

\begin{lemma}\label{lm_scontr_hordir1}
Let $(A_1,\ldots, A_r)$ be such that
\begin{itemize}
\item[(A)]
each line segment $[A_iA_r]$, $i=1, \ldots, r-1$,
is $\varepsilon$-horizontal with respect to the axis $x$,
passing through $A_r$ parallel to $\nu$, and directed in the direction of $\nu$.
\item[(B)] each line segment $[A_kA_{k+1}]$, $k\in \{1,\ldots, r-1\}$ which is $\varepsilon$-horizontal with respect to $x$ has projection  on $x$ of the same direction as that of $[A_{r-1}A_r]$, i.e.\
denoting by $x_k:= (A_k-A_r)\cdot \nu$, one has
\[
(x_{k+1}-x_k)(x_r-x_{r-1}) \geq 0.
\]
\end{itemize}
Then
$[A_1A_r]$ has projection  on $x$ of the same direction as that of $[A_{r-1}A_r]$, i.e.\
\begin{equation}\label{eq_hodir1_dir1}
(x_r-x_1)(x_r-x_{r-1}) \geq 0,
\end{equation}
 and
\begin{equation}\label{eq_hodir1_var1}
\ell_{x}(A_1,\ldots, A_r)\leq |x_r-x_1| + 2\ell_{x^\bot}(A_1,\ldots, A_r)\cot\varepsilon.
\end{equation}
\end{lemma}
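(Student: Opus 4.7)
The plan is to treat the two claims separately: the direction statement \eqref{eq_hodir1_dir1} via a geometric observation about double cones, and the variation estimate \eqref{eq_hodir1_var1} by a horizontal/vertical split of $\ell_x$.

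For \eqref{eq_hodir1_dir1}, I translate so that $A_r = 0$ and, taking $\nu$ to be a unit vector, assume without loss of generality that $x_r > x_{r-1}$ (equality is trivial); the claim then reduces to $x_1 \leq 0$. By (A) every $A_i$ with $i < r$ lies in the closed double cone $K := \{t\nu + v \colon v \in \nu^\bot,\ |v| \leq |t|\tan\varepsilon\}$ and is non-zero, so $x_i = A_i\cdot\nu \neq 0$ for $i < r$. I argue $x_k \leq 0$ by backward induction on $k$: the cases $k = r, r-1$ are immediate. For the inductive step with $k+1 \leq r-1$ and $x_{k+1}\leq 0$, if $[A_kA_{k+1}]$ is horizontal then (B) gives $x_k \leq x_{k+1} \leq 0$; otherwise, assuming $x_k > 0$ for contradiction, write $A_k = t_k\nu + v_k$ and $A_{k+1} = t_{k+1}\nu + v_{k+1}$ with $t_k > 0 > t_{k+1}$ (using $A_{k+1}\neq A_r$ from (A)). The cone bounds from (A) yield
\[
|v_{k+1} - v_k| \leq |v_k| + |v_{k+1}| \leq (t_k - t_{k+1})\tan\varepsilon = |t_{k+1} - t_k|\tan\varepsilon,
\]
so $\widehat{((A_kA_{k+1}),x)} \leq \varepsilon$, contradicting verticality. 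Hence $x_1 \leq 0$, proving \eqref{eq_hodir1_dir1}.

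For \eqref{eq_hodir1_var1}, I split $\ell_x(A_1,\ldots,A_r) = \sum_{k=1}^{r-1}|x_{k+1}-x_k|$ according to horizontal versus vertical consecutive segments. A vertical segment has $\widehat{((A_kA_{k+1}),x)} > \varepsilon$, hence $|x_{k+1}-x_k| \leq |p_{x^\bot}(A_{k+1}) - p_{x^\bot}(A_k)|\cot\varepsilon$, so the vertical contribution to $\ell_x$ is at most $\cot\varepsilon\cdot\ell_{x^\bot}(A_1,\ldots,A_r)$. By (B) the horizontal increments $x_{k+1}-x_k$ all share the sign of $x_r - x_{r-1}$, so the sum of their absolute values equals the absolute value of their algebraic sum; writing $\sum_{\mathrm{horiz}}(x_{k+1}-x_k) = (x_r - x_1) - \sum_{\mathrm{vert}}(x_{k+1}-x_k)$ and bounding the vertical sum by its absolute value shows that the horizontal contribution is at most $|x_r - x_1| + \cot\varepsilon\cdot\ell_{x^\bot}(A_1,\ldots,A_r)$. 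Summing the two bounds yields \eqref{eq_hodir1_var1}.

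The main obstacle is the direction claim: the geometric fact that a vertical segment cannot join the two open halves of $K$ — which is what keeps $A_1$ on the same side of $A_r$ as $A_{r-1}$ — depends essentially on the cone bounds from (A) and requires the explicit coordinate estimate above. Once this is in hand, the variation estimate is essentially routine bookkeeping, and in fact does not use \eqref{eq_hodir1_dir1} at all.
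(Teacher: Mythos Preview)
Your proof is correct and follows essentially the same approach as the paper: backward induction for the direction claim \eqref{eq_hodir1_dir1}, using that a segment joining the two open halves of the $\varepsilon$-cone around $x$ must itself be $\varepsilon$-horizontal (you compute this in coordinates, the paper via a cosine inequality --- same content), and a horizontal/vertical split of $\ell_x$ together with the sign condition (B) for \eqref{eq_hodir1_var1}. Your closing observation that \eqref{eq_hodir1_var1} does not rely on \eqref{eq_hodir1_dir1} is also in line with the paper's argument.
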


It is worth emphasizing that the above Lemma~\ref{lm_scontr_hordir1} does not require that
$(A_1,\ldots, A_r)$ be self-contracted.

\begin{proof}
We first prove that for each $k=1,\ldots, r-1$, the line segment $[A_kA_r]$ has projection  on $x$ of the same direction as that of $[A_{r-1}A_r]$, i.e.\
\begin{equation}\label{eq_hodir1_dir2}
(x_r-x_k)(x_r-x_{r-1}) \geq 0,
\end{equation}
so that in particular~\eqref{eq_hodir1_dir1} follows. The relationship~\eqref{eq_hodir1_dir2} is proven by backward induction on $k$. In fact, the base $k=r-1$ is automatic, while the inductive step is proven by contradiction as follows. Suppose that~\eqref{eq_hodir1_dir2} holds for some $k=j$, where $j\in \{2,\ldots, r-1\}$, but does not hold for $k=j-1$.
By assumption~(B) this is only possible when $[A_{j-1}A_j]$ is $\varepsilon$-vertical with respect to the $x$ axis.
Denoting for brevity
$a:= A_r - A_{j-1}$ and $b:= A_j-A_r$, we have that
$a$ and $b$ have the same direction with respect to $x$, i.e.\
$(a\cdot\nu) (b\cdot\nu) >0$ and
$\widehat{(a,\nu')} \leq \varepsilon$, $\widehat{(b,\nu')} \leq \varepsilon$ with either
$\nu'=\nu$ or $\nu'=-\nu$.
Then
\begin{align*}
\cos \widehat{((A_{j-1}A_j),x)} \geq  \cos \widehat{(a+b,\nu')} & =
\frac{\cos(\widehat{(a,\nu')})|a|+ \cos(\widehat{(b,\nu')})|b|}{|a+b|} \\
& \geq \cos \varepsilon \frac{ |a|+|b|}{|a+b|}\geq \cos\varepsilon,
\end{align*}
so that $\widehat{((A_{j-1}A_j),x)}\leq \varepsilon$ contradicting $\varepsilon$-verticality of $[A_{j-1}A_j]$  with respect to the $x$ axis and hence
concluding the proof of~\eqref{eq_hodir1_dir2}.

To prove~\eqref{eq_hodir1_var1}, we note that
\begin{align*}
    x_r-x_1 &=  \sum_{\stackrel{i\in \{1, \ldots, r-1\}}{ [A_iA_{i+1}]\mbox{ horizontal}}} (x_{i+1}-x_i) +
 \sum_{\stackrel{i\in \{1, \ldots, r-1\}}{ [A_iA_{i+1}]\mbox{ vertical}}} (x_{i+1}-x_i),
\end{align*}
so that by~(B), one has
\begin{align*}
    |x_r-x_1| &
\geq   \left|\sum_{\stackrel{i\in \{1, \ldots, r-1\}}{ [A_iA_{i+1}]\mbox{ horizontal}}} (x_{i+1}-x_i)\right| -
 \sum_{\stackrel{i\in \{1, \ldots, r-1\}}{ [A_iA_{i+1}]\mbox{ vertical}}} |x_{i+1}-x_i|\\
&=   \sum_{\stackrel{i\in \{1, \ldots, r-1\}}{ [A_iA_{i+1}]\mbox{ horizontal}}} |x_{i+1}-x_i| -
 \sum_{\stackrel{i\in \{1, \ldots, r-1\}}{ [A_iA_{i+1}]\mbox{ vertical}}} |x_{i+1}-x_i|\\
& =  \sum_{i=1}^{r-1} |x_{i+1}-x_i| -
 2\sum_{\stackrel{i\in \{1, \ldots, r-1\}}{ [A_iA_{i+1}]\mbox{ vertical}}} |x_{i+1}-x_i|\\
&\geq \sum_{i=1}^{r-1} |x_{i+1}-x_i| -
 2\sum_{\stackrel{i\in \{1, \ldots, r-1\}}{ [A_iA_{i+1}]\mbox{ vertical}}} |p_{x^\bot}(A_{i+1}-A_i)|\cot\varepsilon\\
&\geq \ell_{x}(A_1,\ldots, A_r)-2\ell_{x^\bot}(A_1,\ldots, A_r)\cot\varepsilon,
\end{align*}
concluding the proof.
\end{proof}

\begin{lemma}\label{lm_scontr_horiz3}
Assume that the vector $(A_1,\ldots, A_r)$ 
be self-contracted with respect to
the norm $\|\cdot\|$. Let also $\varepsilon\in (0,\varepsilon_1]$, where $\varepsilon_1$ is defined by Lemma~\ref{lm_scontr_horiz1},
and for each line segment $[A_kA_{k+1}]$ with $k=2,\ldots, r-1$ 
which is $\varepsilon$-horizontal with respect to $x^1$ axis the preceding line segment $[A_{k-1}A_k]$ is  either also
$\varepsilon$-horizontal with respect to $x^1$ axis and its projection
on $x^1$ is directed oppositely to that of $[A_kA_{k+1}]$, i.e.\
\[
(x^1_{k+1}-x^1_k)(x^1_k-x^1_{k-1}) < 0,
\]
or is $\varepsilon$-vertical with respect to $x^1$ axis, and so is $[A_{k-1}A_{k+1}]$.
Then
\begin{equation}\label{eq_scontr_estv1_0}
\ell_{x^1}(A_1,\ldots, A_r)\leq C \ell_{(x^1)^\bot}(A_1,\ldots, A_r) +  |A_1A_2|,
\end{equation}
with some positive constant $C$ 
depending only 
on $\varepsilon$ (and in particular, one can take $C:=\cot\varepsilon+ 8/\sin\varepsilon$).
\end{lemma}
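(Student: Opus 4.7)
I would decompose the segment indices $\{1,\ldots,r-1\}$ into the subset $V$ of those $k$ for which $[A_kA_{k+1}]$ is $\varepsilon$-vertical with respect to $x^1$ and the complementary subset $H$ of $\varepsilon$-horizontal ones, and estimate the two contributions to $\ell_{x^1}(A_1,\ldots,A_r)=\sum_{k=1}^{r-1}|x^1_{k+1}-x^1_k|$ separately. The vertical part follows immediately from the definition of $\varepsilon$-verticality: $|x^1_{k+1}-x^1_k|\leq \cot\varepsilon\cdot|p_{(x^1)^\bot}(A_{k+1}-A_k)|$ for each $k\in V$, whose sum produces the $\cot\varepsilon\cdot\ell_{(x^1)^\bot}(A_1,\ldots,A_r)$ piece of the right-hand side of~\eqref{eq_scontr_estv1_0}.

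The core of the proof concerns the horizontal part. I would group $H$ into maximal \emph{runs} of consecutive indices. Inside a single run the hypothesis forces the signs of $x^1_{j+1}-x^1_j$ to alternate, so for any triple $(A_{j-1},A_j,A_{j+1})$ sitting in the interior of such a run, the vectors $A_{j-1}-A_j$ and $A_{j+1}-A_j$ both point within angular distance $\varepsilon$ of the same one of the two orientations of $x^1$ (the one opposite to the common $x^1$-direction of the two segments meeting at $A_j$). Hence $\angle A_{j-1}A_jA_{j+1}\leq 2\varepsilon\leq 2\varepsilon_1$, and since this triple is self-contracted as a subvector, Lemma~\ref{lm_scontr_horiz1} gives $|A_jA_{j+1}|\leq (3/4)|A_{j-1}A_j|$. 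A geometric summation then bounds the $x^1$-variation of the run by a constant multiple of $|A_kA_{k+1}|$, where $[A_kA_{k+1}]$ is the first segment of the run.

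It remains to control $|A_kA_{k+1}|$ for every first segment of a run. The at-most-one run beginning at $k=1$ is handled by the additive $|A_1A_2|$ term on the right-hand side of~\eqref{eq_scontr_estv1_0}. For any run beginning at $k\geq 2$, the hypothesis ensures that both $[A_{k-1}A_k]$ and $[A_{k-1}A_{k+1}]$ are $\varepsilon$-vertical with respect to $x^1$, so the standard verticality inequality $|\cdot|\leq|p_{(x^1)^\bot}(\cdot)|/\sin\varepsilon$, combined with the triangle bound $|A_kA_{k+1}|\leq|A_{k-1}A_k|+|A_{k-1}A_{k+1}|$, controls $|A_kA_{k+1}|$ by a fixed multiple of $|p_{(x^1)^\bot}(A_k-A_{k-1})|$ and $|p_{(x^1)^\bot}(A_{k+1}-A_{k-1})|$, each divided by $\sin\varepsilon$. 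Summing across all such runs and exploiting that distinct runs have disjoint leading $H$-segments and disjoint immediately-preceding $V$-segments, each perpendicular increment appears only a bounded number of times in the total, producing the $O(1/\sin\varepsilon)\cdot\ell_{(x^1)^\bot}(A_1,\ldots,A_r)$ contribution and completing the inequality.

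The main obstacle is the final bookkeeping step: although the geometric-series bound within one run is straightforward, an unrestricted sum of first-segment bounds across many runs could in principle produce a constant growing with the number of runs. The disjointness of runs and of their preceding $V$-segments is precisely what prevents this overcounting, so that each increment of $\ell_{(x^1)^\bot}$ is charged only a fixed number of times and the resulting constant is of the explicit form $\cot\varepsilon+O(1/\sin\varepsilon)$ independent of $r$.
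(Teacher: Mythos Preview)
Your proposal is correct and follows essentially the same route as the paper: decompose the segments into vertical and maximal horizontal runs, bound vertical segments by $\cot\varepsilon$ times their $(x^1)^\bot$-projection, bound each horizontal run by a geometric series controlled by its first segment via Lemma~\ref{lm_scontr_horiz1}, and then anchor that first segment either to $|A_1A_2|$ (if the run starts at $k=1$) or to the two vertical segments $[A_{k-1}A_k]$, $[A_{k-1}A_{k+1}]$ supplied by the hypothesis. The paper packages the geometric-series step as Corollary~\ref{co_scontr_horiz2}, and in the final bookkeeping it rewrites $|A_{k-1}^\bot A_{k+1}^\bot|\le |A_{k-1}^\bot A_k^\bot|+|A_k^\bot A_{k+1}^\bot|$ so that each perpendicular increment $|A_j^\bot A_{j+1}^\bot|$ is charged at most twice, yielding exactly $C=\cot\varepsilon+8/\sin\varepsilon$; your disjointness remark is the same observation.
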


\begin{proof}
We denote by $H$ (resp.\ $V$) the set of subvectors $(A_{q_k}, A_{q_k+1},\ldots, A_{q_{k+1}})\subset (A_1,\ldots, A_r)$
such that each line segment $[A_iA_{i+1}]$, $i=q_k, \ldots, q_{k+1}-1$ is $\varepsilon$-horizontal (resp.\ $\varepsilon$-vertical) with respect to $x^1$ axis
and either $q_k=1$ or the preceding line segment line $[A_{q_k-1}A_{q_k}]$ is $\varepsilon$-vertical (resp.\ $\varepsilon$-horizontal) with respect to $x^1$ axis.
Consider the partition $1=q_1<q_2<\ldots< q_\sigma=r$ of the set $\{1,\ldots, r\}$
such that each subvector  $(A_{q_k}, A_{q_k+1},\ldots, A_{q_{k+1}})$, $k=1,\ldots, \sigma$,
either belongs to $H$ or to $V$.

For $(A_{q_k}, A_{q_k+1},\ldots, A_{q_{k+1}})\in H$  by
the assumption of the statement being proven the line segments of this polygonal line have alternating directions
with respect to $x^1$ axis
and hence by Corollary~\ref{co_scontr_horiz2} we have
\[
\ell_{x^1}\left(\left\{A_j\right\}_{j={q_k}}^{q_{k+1}}\right)\leq
\ell\left(\left\{A_j\right\}_{j={q_k}}^{q_{k+1}}\right)\leq
4 |A_{q_k}A_{q_k+1}|.
\]
For $(A_{q_k}, A_{q_k+1},\ldots, A_{q_{k+1}})\in V$, we just estimate
\begin{align*}
\ell_{x^1}\left(\left\{A_j\right\}_{j={q_k}}^{q_{k+1}}\right)  &= \sum_{j=q_k}^{q_{k+1}-1} |A_j^1A_{j+1}^1|
\leq \sum_{j=q_k}^{q_{k+1}-1} |A_j^\bot A_{j+1}^\bot|\cot\varepsilon = \ell_{(x^1)^\bot}\left(\left\{A_j\right\}_{j={q_k}}^{q_{k+1}}\right)\cot\varepsilon.
\end{align*}
We have now
\begin{equation}\label{eq_scontr1_estv1}
\begin{aligned}
\ell_{x^1} (A_1,\ldots, A_r)  &= \sum_{k=1}^{\sigma-1} \ell_{x^1}(A_{q_k}, \ldots, A_{q_{k+1}})\\
 & = \sum_{\stackrel{k\in \{1,\ldots,\sigma-1\},}{\left\{A_j\right\}_{j={q_k}}^{q_{k+1}}\in V}} \ell_{x^1}\left(\left\{A_j\right\}_{j={q_k}}^{q_{k+1}}\right)
+\sum_{\stackrel{k\in \{1,\ldots,\sigma-1\},}{\left\{A_j\right\}_{j={q_k}}^{q_{k+1}}\in H}} \ell_{x^1}\left(\left\{A_j\right\}_{j={q_k}}^{q_{k+1}}\right)\\
&\leq \cot\varepsilon \sum_{\stackrel{k\in \{1,\ldots,\sigma-1\},}{\left\{A_j\right\}_{j={q_k}}^{q_{k+1}}\in V}} \ell_{(x^1)^\bot}\left(\left\{A_j\right\}_{j={q_k}}^{q_{k+1}}\right) +
4 \sum_{\stackrel{k\in \{1,\ldots,\sigma-1\},}{\left\{A_j\right\}_{j={q_k}}^{q_{k+1}}\in H}} |A_{q_k}A_{q_k+1}|.
\end{aligned}
\end{equation}
For each $k\in \{1,\ldots,\sigma-1\}$ such that $(A_{q_k},\ldots, A_{q_{k+1}})\in H$ except $k=1$
we estimate 
\begin{align*}
|A_{q_k}A_{q_k+1}| & 
     \leq |A_{q_k-1}A_{q_k}| + |A_{q_k-1}A_{q_k+1}|
\leq \frac{1}{\sin\varepsilon} \left(|A_{q_k-1}^\bot A_{q_k}^\bot| + |A_{q_k-1}^\bot A_{q_k+1}^\bot|\right)\\
&\leq \frac{1}{\sin\varepsilon} \left(2|A_{q_k-1}^\bot A_{q_k}^\bot| + |A_{q_k}^\bot A_{q_k+1}^\bot|\right)
\leq
\frac{2}{\sin\varepsilon}\left(|A_{q_k-1}^\bot A_{q_k}^\bot|+|A_{q_k}^\bot A_{q_k+1}^\bot|\right),
\end{align*}
and if
$(A_{q_1},\ldots, A_{q_{2}})=(A_1,\ldots, A_{q_{2}})\in H$, then just
\[
|A_{q_1}A_{q_1+1}| =  |A_1A_2|.
\]
Plugging this into~\eqref{eq_scontr1_estv1}, we get
\[
\begin{aligned}
\ell_{x^1}& \left(\left\{A_j\right\}_{j=1}^r\right)  \leq \cot\varepsilon \sum_{\stackrel{k\in \{1,\ldots,\sigma-1\},}{\left\{A_j\right\}_{j={q_k}}^{q_{k+1}}\in V}} \sum_{j={q_k}}^{q_{k+1}-1} |A_{j+1}^\bot A_{j}^\bot| +
\\
&\qquad\qquad\qquad\qquad
\frac{8}{\sin\varepsilon} \sum_{\stackrel{k\in \{2,\ldots,\sigma-1\},}{\left\{A_j\right\}_{j={q_k}}^{q_{k+1}}\in H}} \left(|A_{q_k-1}^\bot A_{q_k}^\bot|+|A_{q_k}^\bot A_{q_k+1}^\bot|\right) +  |A_1A_2|\\
&\leq \left(\cot\varepsilon+ \frac{8}{\sin\varepsilon}\right)\sum_{\stackrel{k\in \{1,\ldots,\sigma-1\},}{\left\{A_j\right\}_{j={q_k}}^{q_{k+1}}\in V}} \sum_{j={q_k}}^{q_{k+1}-1} |A_{j+1}^\bot A_{j}^\bot| +
\\
&\qquad\qquad\qquad\qquad
\frac{8}{\sin\varepsilon}  \sum_{\stackrel{k\in \{2,\ldots,\sigma-1\},}{\left\{A_j\right\}_{j={q_k}}^{q_{k+1}}\in H}} |A_{q_k}^\bot A_{q_k+1}^\bot| +  |A_1A_2|,
\end{aligned}
\]
because $[A_{q_k-1} A_{q_k}]$ is $\varepsilon$-vertical with respect to $x^1$ axis for $\left\{A_j\right\}_{j={q_k}}^{q_{k+1}}\in H$. Hence
\[
\ell_{x^1} \left(\left\{A_j\right\}_{j=1}^r\right)  \leq \left(\cot\varepsilon+ \frac{8}{\sin\varepsilon}\right) \ell_{(x^1)^\bot} (A_1,\ldots, A_r)+|A_1A_2|
\]
as claimed,
concluding the proof.
\end{proof}

\subsubsection{Estimate of $\ell(A_1,\ldots, A_r)$}

Finally we are able to prove Lemma~\ref{lm_scontr_base0} providing the estimate on the total variation of $(A_1,\ldots, A_r)$.

\begin{proof}[Proof of Lemma~\ref{lm_scontr_base0}]
Plugging~\eqref{eq_scontr_estv1_2base} (Corollary~\ref{co_scontr_horiz4a}) into~\eqref{eq_scontr_estv1_base0j}
(Lemma~\ref{lm_scontr_vert1}),
we get
\[
\ell_{x^j}(A_1,\ldots, A_r)\leq (2+4C_2\tan\delta) |A_1A_r| +4C_1\tan\delta \sum_{i=2}^n \ell_{x^i}(A_1,\ldots, A_r),\quad
j=2, \ldots, r,
\]
where $C_1$ and $C_2$ are as in Corollary~\ref{co_scontr_horiz4a}, i.e.\
$C_1:= (3\cot\varepsilon_1+ 8/\sin\varepsilon_1)C(\xi)$
and $C_2:=\diam \B$, where $\B$ is the closed unit ball of $\|\cdot\|$.
We may now apply Lemma~\ref{lm_scontr_sumLj1} with
\begin{align*}
L_0& :=(2+4C_2\tan\delta)|A_1A_r|,\\
L_j &:=\ell_{x_j}(A_1,\ldots, A_r),
\end{align*}
 $i:=2$ and $C:=4C_1\tan\delta$ (recalling $\tan\delta <1/(8(n-1)C_1)$ under conditions of the statement being proven)
to get
\[
\ell_{x^j}(A_1,\ldots, A_r)\leq (4+8C_2\tan\delta) |A_1A_r|, \quad j=2, \ldots, r,
\]
and plugging the latter estimate back into~\eqref{eq_scontr_estv1_2base}, we get
\[
\ell_{x^1}(A_1,\ldots, A_r)\leq (C_1(n-1)(4+8C_2\tan\delta)+C_2) |A_1A_r|,
\]
which concludes the proof.
\end{proof}

\subsection{Inductive step}

We prove now the following statement that will serve as an inductive step.

\begin{lemma}\label{lm_scontr_ind0}
Assume that 
\[
\delta\in \left(0, \bar\delta\wedge\arctan \frac{\sin\varepsilon_0}{8 (n-1)
C(\xi)}\right),
\]
where $\varepsilon_0>0$ is defined in Lemma~\ref{lm_scontr_defxi1},
and suppose that the following inductive hypothesis holds:
if for some $i\in \{2, \ldots, n-1\}$
there exists a constant $C_{i-1}>0$ such that
whenever the
$(n-i+2)$-tuple $(\P_{\alpha_{i-1}},\ldots, \P_{\alpha_n})$ is admissible,
the vector
$(A_1,\ldots, A_r, A_{r+1}, \ldots A_{r+n-i+1})$
is self-contracted with respect to
the norm $\|\cdot\|$ and
$(A_1,\ldots, A_r)\subset \P_{\alpha_j} +A_{r+n-j}$
for all $j=i-1,\ldots, n$,  then
\begin{equation}\label{eq_scontr_estv1_ind1hyp}
\ell(A_1,\ldots, A_r)\leq C_{i-1}|A_1A_r|.
\end{equation}
Then this property holds also for $i+1$, i.e.\
there exists a constant $C_{i}>0$ (depending only on the norm $\|\cdot\|$, on $C_{i-1}$, on $n$, $\delta$ and $\xi$) such that
whenever the $(n-i+1)$-tuple $(\P_{\alpha_i},\ldots, \P_{\alpha_n})$ is admissible, the vector
$(A_1,\ldots, A_r, A_{r+1}, \ldots, A_{r+n-i})$
is self-contracted with respect to
the norm $\|\cdot\|$ and
$(A_1,\ldots, A_r)\subset \P_{\alpha_j} +A_{r+n-j}$
for all $j=i,\ldots, n$,  then
\begin{equation}\label{eq_scontr_estv1_ind1}
\ell(A_1,\ldots, A_r)\leq C_{i}|A_1A_r|.
\end{equation}
\end{lemma}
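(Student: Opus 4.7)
The strategy mirrors the proof of the base case Lemma~\ref{lm_scontr_base0}, with the ``vertical'' estimates along the axes $x^j := \mathrm{span}\{\nu^{\alpha_j}\}$, $j = i,\ldots,n$, handled as there, and the new ``horizontal'' estimate---along the $(i-1)$-dimensional subspace $\Pi^{i-1} := (\mathrm{span}\{\nu^{\alpha_j}\}_{j=i}^n)^\perp$---obtained by applying the inductive hypothesis to appropriately decomposed sub-vectors rather than by direct geometric analysis as in Lemmas~\ref{lm_scontr_horiz3}--\ref{lm_scontr_horiz4}.

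For the vertical part, for each $j\in\{i,\ldots,n\}$ the assumption $(A_1,\ldots,A_r)\subset\P_{\alpha_j}+A_{r+n-j}$ together with Lemma~\ref{lm_scontr_epsconvA1Ar} supplies the hypothesis of Lemma~\ref{lm_scontr_vert1a} with $\nu:=\nu^{\alpha_j}$. Combined with Lemma~\ref{lm_scontr_estvarPi-i0} to split the orthogonal direction into the $\Pi^{i-1}$-part and the remaining $x^k$-parts, one obtains, just as in Lemma~\ref{lm_scontr_vert1},
\[
\ell_{x^j}(A_1,\ldots,A_r) \le |A_1A_r| + c_1\tan\delta\,\Bigl(\ell_{\Pi^{i-1}}(A_1,\ldots,A_r) + \sum_{k=i}^n \ell_{x^k}(A_1,\ldots,A_r)\Bigr),
\]
with $c_1$ depending only on $\|\cdot\|$, $n$ and $\xi$.

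For the horizontal part I would imitate Step~1 of the easy-case proof in Section~\ref{sec_sc_easy1}. Using the partition of $\partial\B$ into $\{\T_\beta\}_{\beta=1}^N$, partition $(A_1,\ldots,A_r)$ into maximal runs on each of which the vertices all lie in a common shifted pyramid $\P_\beta + Y^\beta$ with $\beta \in \mathbb{A}_{\varepsilon_0}(\Pi^{i-1})$ and $Y^\beta$ an anchor chosen so that: (i) the $(n-i+2)$-tuple $(\P_\beta,\P_{\alpha_i},\ldots,\P_{\alpha_n})$ is admissible (automatic by $\beta\in\mathbb{A}_{\varepsilon_0}(\Pi^{i-1})$ and $\delta<\bar\delta$); (ii) the augmented vector consisting of the run together with the existing anchors $A_{r+1},\ldots,A_{r+n-i}$ and the new anchor $Y^\beta$ playing the role of $A_{r+n-i+1}$ remains self-contracted; and (iii) the containments in $\P_{\alpha_j}+A_{r+n-j}$ for $j\ge i$ are inherited. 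The inductive hypothesis applied to each such run bounds its length by $C_{i-1}$ times the Euclidean distance between its endpoints, controlled by $c_2|A_1A_r|$ via Remark~\ref{rm_sc1subvec2}. Transitions between consecutive runs are handled exactly by the dichotomy of cases (i)/(ii) in Step~1 of Section~\ref{sec_sc_easy1}: each ``first entry'' into a new $\beta$ is charged once against $|A_1A_r|$ (finitely many $\beta$'s), and otherwise self-contraction replaces the transition segment by one of the inner run segments. Summing yields $\ell(A_1,\ldots,A_r)\le c_3|A_1A_r|$, hence in particular $\ell_{\Pi^{i-1}}(A_1,\ldots,A_r)\le c_3|A_1A_r|$.

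Plugging this into the inequality of the second paragraph and applying Lemma~\ref{lm_scontr_sumLj1}---whose hypothesis is satisfied because the assumed bound $\tan\delta<\sin\varepsilon_0/(8(n-1)C(\xi))$ forces $c_1\tan\delta\le\tfrac{1}{2(n-i+1)}$---gives $\ell_{x^j}\le C|A_1A_r|$ for each $j=i,\ldots,n$, whence $\ell(A_1,\ldots,A_r)\le C_i|A_1A_r|$ by Lemma~\ref{lm_scontr_estproj1}. The main obstacle is the construction of the anchor points $Y^\beta$ in the preceding paragraph: $Y^\beta$ must simultaneously enforce the containment in $\P_\beta+Y^\beta$ for every vertex of its run, be compatible with the existing anchors $A_{r+1},\ldots,A_{r+n-i}$ so that the augmented vector is self-contracted and the containments for $j\ge i$ survive on the run, and permit the combinatorial accounting of transitions as in the easy case despite the extra pyramids already present. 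This is precisely where the admissibility of the extended tuple and the uniform angle bound $\widehat{(\nu^\beta,\Pi^{i-1})}<\xi$ built into $\mathbb{A}_{\varepsilon_0}(\Pi^{i-1})$ enter decisively.
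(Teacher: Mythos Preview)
Your sketch has the right shape---split into ``vertical'' variation along $x^j$, $j=i,\ldots,n$, and ``horizontal'' variation along $\Pi^{i-1}$, then close the loop---but the horizontal step contains a genuine gap that you yourself flag as ``the main obstacle'' and then do not resolve.

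The difficulty is this: you propose to partition $(A_1,\ldots,A_r)$ into runs lying in $\P_\beta + Y^\beta$ with $\beta \in \mathbb{A}_{\varepsilon_0}(\Pi^{i-1})$. But membership $A_j \in \P_\beta + Y$ for some $\beta \in \mathbb{A}_{\varepsilon_0}(\Pi^{i-1})$ means precisely that the ray from $Y$ through $A_j$ meets $\partial\B$ in $V_{\varepsilon_0}(\Pi^{i-1})$, i.e.\ that $[Y A_j]$ is $\varepsilon_0$-horizontal with respect to $\Pi^{i-1}$. There is no reason every $A_j$ should satisfy this for \emph{any} anchor $Y$ you could choose; vertices reached by $\varepsilon_0$-vertical steps simply fall outside all such pyramids. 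So the partition you describe does not exist in general, and no amount of cleverness in choosing $Y^\beta$ will fix this. (A symptom: your horizontal argument, if it worked, would already give $\ell(A_1,\ldots,A_r)\le c_3|A_1A_r|$ outright, rendering the vertical estimates and Lemma~\ref{lm_scontr_sumLj1} superfluous.)

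The paper's proof avoids both problems by interposing an extra decomposition. It first extracts maximal \emph{$\varepsilon_0$-horizontal blocks} $(A_{q_l},\ldots,A_{k_l})$: intervals on which every segment $[A_j A_{k_l}]$ is $\varepsilon_0$-horizontal with respect to $\Pi^{i-1}$. Inside such a block the last point $A_{k_l}$---already in the sequence, so no construction needed and self-contraction of the augmented vector is automatic by Remark~\ref{rm_sc1subvec1}---serves as the anchor: horizontality of $[A_j A_{k_l}]$ guarantees $A_j \in \P_\beta + A_{k_l}$ for some $\beta \in \mathbb{A}_{\varepsilon_0}(\Pi^{i-1})$, so the Step~1 accounting and the inductive hypothesis apply block by block to give $\ell(A_{q_l},\ldots,A_{k_l}) \le C'|A_{q_l}A_{k_l}|$. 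What remains after excising the block interiors is a subvector whose segments are all $\varepsilon_0$-vertical with respect to $\Pi^{i-1}$ (except possibly the first), and \emph{that} vertical skeleton is what gets fed into the $\ell_{x^j}$-estimates and Lemma~\ref{lm_scontr_sumLj1}. The horizontal/vertical separation happens \emph{before} the pyramid decomposition, not after; this is the idea your sketch is missing.
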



\begin{proof}
Let 
the $n$-tuple of sets $(\P_{\alpha_i},\ldots, \P_{\alpha_n})$
with
$\alpha_j\in \{1,\ldots, N\}$, $j=i, \ldots, n$,
be admissible, the vector
$(A_j)_{j=1}^{r+n-i}$
be self-contracted with respect to
the norm $\|\cdot\|$ and
$(A_1,\ldots, A_r)\subset \P_{\alpha_j} +A_{r+n-j}$
for all $j=i,\ldots, n$. Let $x^j$ be the axis in the direction $\nu^{\alpha_j}$ determined by $\P_{\alpha_j}$ passing through
$A_r$, and consider the $(i-1)$-dimensional linear subspace
$\Pi^{i-1}:= (\mathrm{span}\, \{\nu^{\alpha_j}\}_{j=i}^n)^\bot$.
The rest of the proof will be organized in several steps.

{\sc Step 1}.
We act similarly to the proof of Lemma~\ref{lm_scontr_horiz4}. Namely,
starting from the last  line segment $[A_{k-1}A_k]$ with $1<k\leq r$ which is $\varepsilon_0$-horizontal with respect to $\Pi^{i-1}$ (i.e.\ with maximum $k\in  \{2,\ldots, r\}$
such that $[A_{k-1}A_k]$ is $\varepsilon_0$-horizontal with respect to this subspace), if it exists,
we find the minimum $q\in \{1, \ldots, k-1\}$ such that for every $j\in \{q, \ldots, k-1\}$ the line segment
$[A_jA_k]$ is $\varepsilon_0$-horizontal with respect to $\Pi^{i-1}$. Note that in this way, if $q>1$, then $[A_{q-1}A_k]$ is $\varepsilon_0$-vertical with respect to $\Pi^{i-1}$.
Now we repeat the same operation
with  $q-1$
instead of $r$, and continue in this way by backward induction as far as possible. In this way we find a
finite (possibly empty) sequence of disjoint subintervals $\{q_l,q_l+1,\ldots, k_l\}_{l=1}^{\nu}$ of $\{1,\ldots, r\}$
such that for every $j\in \{q_l, \ldots, k_l-1\}$ the line segment
$[A_jA_{k_l}]$ is $\varepsilon_0$-horizontal with respect to $\Pi^{i-1}$, while
$[A_{q_l-1}A_{k_l}]$ is $\varepsilon_0$-vertical
with respect to this subspace (if $q_l>1$).

We claim that
\begin{equation}\label{eq_scontr_varAikjk1}
\ell(A_{q_l},\ldots, A_{k_l})\leq \bar C_1 \sum_{\stackrel{\P_{\alpha_{i-1}}\colon (\P_{\alpha_{i-1}},\P_{\alpha_i},\ldots, \P_{\alpha_n})}{\mathrm{admissible}}}
\ell((A_{q_l},\ldots, A_{k_l})\cap (\P_{\alpha_{i-1}}+A_{k_l})) + \bar C_2  |A_{q_l}A_{k_l}|
\end{equation}
for all $l=1,\ldots,\nu$,
where $\bar C_1>0$ depends only on the norm $\|\cdot\|$, and $\bar C_2>0$ depends on the norm and on $\delta$.
To show this claim, consider an arbitrary $j\in \{q_l,\ldots, k_l-1\}$ such that $A_{j+1}\in \P_{\alpha_{i-1}}+A_{k_l}$, but $A_j\not \in \P_{\alpha_{i-1}}+A_{k_l}$
for some $\P_{\alpha_{i-1}}$ such that $(\P_{\alpha_{i-1}},\P_{\alpha_i},\ldots, \P_{\alpha_n})$ is admissible. Then either
\begin{itemize}
  \item[(i)] $j$ is the first index in $\{q_l,\ldots, k_l\}$ such that $A_{j+1}\in \P_{\alpha_{i-1}}+A_{k_l}$, i.e.\
  \[
  \{ s\in \{q_l,\ldots, j\}\colon A_s\in \P_{\alpha_{i-1}}+A_{k_l}\}=\emptyset,
  \]
in which case
 we just use $(A_j,A_{j+1})\subset (A_{q_l},\ldots, A_{k_l})$ to estimate $|A_jA_{j+1}|\leq C|A_{q_l}A_{k_l}|$
by Remark~\ref{rm_sc1subvec2}
for some $C>0$ depending only on $\|\cdot\|$; the sum of Euclidean lengths of all such line segments $|A_jA_{j+1}|$ through all $\P_{\alpha_{i-1}}$ such that $(\P_{\alpha_{i-1}},\P_{\alpha_i},\ldots, \P_{\alpha_n})$ is admissible, is estimated therefore from above by $\bar C_2 |A_{q_l}A_{k_l}|$, where $\bar C_2 := C N(\delta)$;
  \item[(ii)] or there is an
\[
s(j):=\max\{ s\in \{q_l,\ldots, j\}\colon A_s\in \P_{\alpha_{i-1}}+A_{k_l}\},
\]
and $s(j)<j$ by the definition of $s(\cdot)$, hence
$(A_j,A_{j+1})\subset (A_{s(j)},\ldots, A_{j+1})$ which implies $|A_jA_{j+1}|\leq C|A_{s(j)}A_{j+1}|$
again by Remark~\ref{rm_sc1subvec2}
for $C>0$ depending only on $\|\cdot\|$ (same as $C$ in~(i)). Therefore, with $\bar C_1:=C\vee 1$ one has
\begin{align*}
\sum_{\stackrel{j\in \{q_l,\ldots, k_l-1\}}{\{A_j,A_{j+1}\}\subset \P_{\alpha_{i-1}}+A_{k_l}}}  & |A_jA_{j+1}| +
\sum_{\stackrel{j\in \{q_l,\ldots, k_l-1\}}{\mathrm{as~in~(ii)}}} |A_{j}A_{j+1}| \\
&\leq
\sum_{\stackrel{j\in \{q_l,\ldots, k_l-1\}}{\{A_j,A_{j+1}\}\subset \P_{\alpha_{i-1}}+A_{k_l}}} |A_jA_{j+1}| +
C\sum_{\stackrel{j\in \{q_l,\ldots, k_l-1\}}{\mathrm{as~in~(ii)}}} |A_{s(j)}A_{j+1}| \\
&\leq \bar C_1 \left(\sum_{\stackrel{j\in \{q_l,\ldots, k_l-1\}}{\{A_j,A_{j+1}\}\subset \P_{\alpha_{i-1}}+A_{k_l}}} |A_jA_{j+1}| +
\sum_{\stackrel{j\in \{q_l,\ldots, k_l-1\}}{\mathrm{as~in~(ii)}}} |A_{s(j)}A_{j+1}| \right)\\
&\quad\quad = \bar C_1 \ell((A_{q_l},\ldots, A_{k_l})\cap (\P_{\alpha_{i-1}}+A_{k_l})).
\end{align*}
\end{itemize}
From~(i) and~(ii) we get therefore~\eqref{eq_scontr_varAikjk1}.

{\sc Step 2}. By the inductive assumption for each $\P_{\alpha_{i-1}}$ with
$(\P_{\alpha_{i-1}},\P_{\alpha_i},\ldots, \P_{\alpha_n})$ admissible one has
for
$(A_j)_{j\in \Lambda_{k_l}}:=
(A_{q_l},\ldots, A_{k_l})\cap (\P_{\alpha_{i-1}}+A_{k_i})$ (clearly, $\Lambda_{k_l}\subset \{q_l, q_l+1, \ldots, k_l\}$, and
$k_l\in \Lambda_{k_l}$) the estimate
\begin{equation}\label{eq_scontr_varAikjk2}
\ell((A_j)_{j\in \Lambda_{k_l}})\leq C_{i-1} |A_{\tilde q_l} A_{k_l}|,
\end{equation}
where $\tilde q_l$ stands for the first index in $\Lambda_{k_l}$. But since $(A_j)_{j\in \Lambda_{k_l}}\subset
(A_{q_l},\ldots, A_{k_l})$, then
\[
|A_{\tilde q_l} A_{k_l}|\leq C |A_{q_l} A_{k_l}|
\]
for some $C>0$ depending only on the norm $\|\cdot\|$, and hence~\eqref{eq_scontr_varAikjk2} implies
$
\ell((A_j)_{j\in \Lambda_{k_l}})\leq C |A_{q_l} A_{k_l}|.
$
Therefore, from~\eqref{eq_scontr_varAikjk1} we get
\begin{equation}\label{eq_scontr_varAikjk4}
\ell(A_{q_l},\ldots, A_{k_l})\leq C' |A_{q_l}A_{k_l}|
\end{equation}
for all $l=1,\ldots,\nu$, where $C':=\bar C_1 C_{i-1} C N(\delta)  + \bar C_2>0$ depends on $\delta$ and on $\|\cdot\|$, as well as on $C_{i-1}$.

{\sc Step 3}.
Consider the subvector
$
(A_j)_{j\in \Lambda}
\subset (A_1,\ldots, A_r)
$
obtained from $(A_1,\ldots, A_r)$ by canceling all the points $A_j$ with $j\in \cup_{i=1}^{\nu}\{q_i+1,\ldots, k_i-1\}$.
The inequality~\eqref{eq_scontr_varAikjk4} yields then, observing that clearly $C'>1$, the estimate
\begin{equation}\label{eq_scontr_varAikjk5}
\ell(A_1,\ldots, A_r)\leq C' \ell((A_j)_{j\in \Lambda}).
\end{equation}
To estimate the right-hand side of~\eqref{eq_scontr_varAikjk5}, we first note that by Lemma~\ref{lm_sc1revtriangle}
\[
|A_{q_i-1}A_{q_i}|+|A_{q_i}A_{k_i}|\leq C|A_{q_i-1}A_{k_i}|
\]
for a $C>0$ depending only on $\|\cdot\|$, if $q_i>1$, and thus for
a subvector $(A_j)_{j\in \tilde \Lambda}\subset (A_j)_{j\in \Lambda}$ obtained from
$(A_j)_{j\in \Lambda}$ by canceling all $A_{q_i}$, $i=1,\ldots, \nu$ except possibly $q_i=\min\Lambda=1$ (i.e.\
$q_i$ equal to
the first index in $\Lambda$ which is $1$ by construction), and recalling
that $\{q_i-1, q_i, k_i\}\subset \Lambda$ when $q_i>1$ again by construction, we get
\begin{equation}\label{eq_scontr_varAikjk6}
\ell((A_j)_{j\in \Lambda})\leq C \ell((A_j)_{j\in \tilde \Lambda}).
\end{equation}
Thus, in view of~\eqref{eq_scontr_varAikjk6} and~\eqref{eq_scontr_varAikjk5} the proof will be concluded
once we show that
for some $\tilde C >0$ depending possibly on 
$\xi$, hence on $\|\cdot\|$, as well as on $n$ and $i$, one has
\begin{equation}\label{eq_scontr_varAikjk6a}
\ell((A_j)_{j\in \tilde \Lambda})\leq \tilde C |A_{\min \tilde \Lambda} A_{\max \tilde \Lambda}|=\tilde C |A_1 A_r|,
\end{equation}
the last equality being due to the fact that  $\min \tilde \Lambda=1$ and $\max\tilde \Lambda=r$ by construction.

{\sc Step 4}. It remains to prove~\eqref{eq_scontr_varAikjk6a}.
We will in fact show it with $\tilde C >0$ depending possibly on $\xi$, hence just on 
$\|\cdot\|$.
To this aim observe that
all the segments of the polygonal line $(A_j)_{j\in \tilde\Lambda}$ except possibly the first one are $\varepsilon_0$-vertical with respect to
$\Pi^{i-1}$.
Denoting for convenience 
$(\tilde A_1, \ldots, \tilde A_\rho):=(A_j)_{j\in \tilde \Lambda}$,
we
have therefore that
\begin{equation}\label{eq_scontr_varAikjk7}
\begin{aligned}
\ell_{\Pi^{i-1}}((A_j)_{j\in \tilde \Lambda})&\leq \ell((A_j)_{j\in \tilde \Lambda})
  = |\tilde A_1\tilde A_2| + \ell(\tilde A_2, \ldots, \tilde A_\rho)\\
& \leq |\tilde A_1\tilde A_2| + \frac{1}{\sin\varepsilon_0}\ell_{(\Pi^{i-1})^\bot} ((A_j)_{j\in \tilde \Lambda})\\
&\leq C |\tilde A_1\tilde A_\rho| + \frac{1}{\sin\varepsilon_0}\ell_{(\Pi^{i-1})^\bot} ((A_j)_{j\in \tilde \Lambda})\quad
\mbox{by Remark~\ref{rm_sc1subvec2},}
\end{aligned}
\end{equation}
with a constant $C>0$ depending only on $\|\cdot\|$ (here in the last inequality we used
$(\tilde A_1,\tilde A_2)\subset (A_j)_{j\in \tilde \Lambda}$). But for each $j\in \{i,\ldots, n\}$ one has
\begin{equation}\label{eq_scontr_varAikjk8}
\begin{aligned}
\ell_{x^j}((A_j)_{j\in \tilde \Lambda}) &\leq |\tilde A_1\tilde A_\rho| + 2\tan\delta
 \ell_{(x^j)^\bot }((A_j)_{j\in \tilde \Lambda})\quad\mbox{by Lemma~\ref{lm_scontr_vert1a}}\\
& \leq |\tilde A_1\tilde A_\rho| + 2\tan\delta
 \ell((A_j)_{j\in \tilde \Lambda})\\
&\leq |\tilde A_1\tilde A_\rho| + 2\tan\delta\left(
 \ell_{\Pi^{i-1}}((A_j)_{j\in \tilde \Lambda})
+ \ell_{(\Pi^{i-1})^\bot}
((A_j)_{j\in \tilde \Lambda})
\right) \\
&\leq |\tilde A_1\tilde A_\rho| + 2\tan\delta
 \ell_{\Pi^{i-1}}((A_j)_{j\in \tilde \Lambda})
+
2 C(\xi) \tan\delta\sum_{k=i}^n  \ell_{x^k}((A_j)_{j\in \tilde \Lambda}), 
\end{aligned}
\end{equation}
the latter inequality being due to Lemma~\ref{lm_scontr_estvarPi-i0}.
By Lemma~\ref{lm_scontr_sumLj1} with
\begin{align*}
L_0 &:= |\tilde A_1\tilde A_\rho| + 2\tan\delta
 \ell_{\Pi^{i-1}}((A_j)_{j\in \tilde \Lambda}),\\
L_k &:=\ell_{x^k}((A_j)_{j\in \tilde \Lambda})
\end{align*}
and $C:= 2 C(\xi) \tan\delta$,
%
recalling that $\tan\delta < 1/(4(n-i+1) C(\xi))$ by assumption of the statement being proven, we obtain then from~\eqref{eq_scontr_varAikjk8} the estimate
\begin{equation}\label{eq_scontr_varAikjk9}
\begin{aligned}
\sum_{j=i}^n \ell_{x^j}((A_j)_{j\in \tilde \Lambda}) & \leq
2 (n-i+1) |\tilde A_1\tilde A_\rho| +
4 (n-i+1) \tan\delta
\ell_{\Pi^{i-1}}((A_j)_{j\in \tilde \Lambda}).
\end{aligned}
\end{equation}
By Lemma~\ref{lm_scontr_estvarPi-i0}
we have
\begin{equation}\label{eq_scontr_varAikjk10}
\begin{aligned}
\ell_{(\Pi^{i-1})^\bot }((A_j)_{j\in \tilde \Lambda}) &\leq
C(\xi) \sum_{j=i}^n \ell_{x^j}((A_j)_{j\in \tilde \Lambda})\\
\\
&\leq
2 (n-i+1) C(\xi) |\tilde A_1\tilde A_\rho| +
4 (n-i+1) C(\xi) \tan\delta \ell_{\Pi^{i-1}}((A_j)_{j\in \tilde \Lambda}),
\end{aligned}
\end{equation}
the latter inequality being due to~\eqref{eq_scontr_varAikjk9}.
Plugging~\eqref{eq_scontr_varAikjk10} into~\eqref{eq_scontr_varAikjk7} yields
\begin{align*}
\ell_{\Pi^{i-1}}((A_j)_{j\in \tilde \Lambda}) &\leq
\left( C+ \frac{2 (n-i+1) C(\xi)}{\sin\varepsilon_0} \right) |\tilde A_1\tilde A_\rho| +
\frac{4(n-i+1)}{\sin\varepsilon_0} C(\xi)\tan\delta \ell_{\Pi^{i-1}}((A_j)_{j\in \tilde \Lambda}),
\end{align*}
and hence, since $\tan\delta < \sin\varepsilon_0/(8(n-i+1)C(\xi))$, one has
\begin{equation}\label{eq_scontr_varAikjk11}
\begin{aligned}
\ell_{\Pi^{i-1}}((A_j)_{j\in \tilde \Lambda}) &\leq
2\left( C+ \frac{2 (n-i+1) C(\xi)}{\sin\varepsilon_0} \right)
 |\tilde A_1\tilde A_\rho|.
\end{aligned}
\end{equation}
Finally, plugging~\eqref{eq_scontr_varAikjk11} into~\eqref{eq_scontr_varAikjk10}
and recalling
\[
4 (n-i+1) C(\xi) \tan\delta\leq \frac 1 2\sin \varepsilon_0, 
\]
we get
\[
\ell_{(\Pi^{i-1})^\bot}((A_j)_{j\in \tilde \Lambda}) \leq
\left(4 (n-i+1) C(\xi)+ C\sin\varepsilon_0\right)|\tilde A_1\tilde A_\rho|,
\]
and which together with~\eqref{eq_scontr_varAikjk11}
gives~\eqref{eq_scontr_varAikjk6a} with $\tilde C:=3 C+ 4 (n-i+1) C(\xi)\left(1+ 1/\sin\varepsilon_0\right)$ as claimed.
\end{proof}

\appendix

\section{Auxiliary lemmata}

We used the following easy statements.

%

\begin{lemma}\label{lm_sc1revtriangle}
If $(A_1,A_2,A_3)$ is self-contracted with respect to the norm $\|\cdot\|$, then
\[
|A_1A_2|+|A_2A_3|\leq C |A_1 A_3|,
\]
for some $C>0$ depending only on $\|\cdot\|$.
\end{lemma}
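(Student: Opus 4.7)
The plan is to use the self-contracted property together with equivalence of norms in the finite-dimensional space $\R^n$.

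First I would note that since all norms on $\R^n$ are equivalent, there exist constants $c_1, c_2 > 0$ (depending only on $\|\cdot\|$) such that $c_1|x| \leq \|x\| \leq c_2|x|$ for every $x \in \R^n$. From the self-contracted property applied to $(A_1, A_2, A_3)$, we have $\|A_3 - A_2\| \leq \|A_3 - A_1\|$, and combining with the norm equivalence yields
\[
|A_2 A_3| \leq \frac{1}{c_1}\|A_3 - A_2\| \leq \frac{1}{c_1}\|A_3 - A_1\| \leq \frac{c_2}{c_1}|A_1 A_3|.
\]

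Then the Euclidean triangle inequality gives $|A_1 A_2| \leq |A_1 A_3| + |A_2 A_3| \leq (1 + c_2/c_1)|A_1 A_3|$. Adding the two bounds produces
\[
|A_1 A_2| + |A_2 A_3| \leq \left(1 + \frac{2c_2}{c_1}\right)|A_1 A_3|,
\]
so the claim holds with $C := 1 + 2c_2/c_1$, which depends only on $\|\cdot\|$.

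There is no real obstacle here; the only subtlety is to observe that the proof genuinely needs the finite-dimensionality of the space (through norm equivalence) in order to convert the norm inequality from the self-contracted hypothesis into the Euclidean inequality being claimed. Equivalently, one may express the constant intrinsically as $C := 1 + 2\,\diam \B$, where $\B$ is the closed Euclidean unit ball rescaled to lie inside the unit ball of $\|\cdot\|$, matching the style of Remark~\ref{rm_sc1subvec2}.
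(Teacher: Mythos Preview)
Your proof is correct and follows essentially the same route as the paper: bound $|A_2A_3|$ by a constant times $|A_1A_3|$ via the self-contracted inequality and norm equivalence (the paper invokes Remark~\ref{rm_sc1subvec2} for this), then add the Euclidean triangle inequality for $|A_1A_2|$. Your closing remark about expressing $C$ via $\diam\B$ is slightly garbled---in the paper $\B$ is the unit ball of $\|\cdot\|$ and $\diam\B$ is its Euclidean diameter, which plays the role of your ratio $c_2/c_1$---but this does not affect the argument.
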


\begin{proof}
In fact, $|A_2A_3|\leq C |A_1 A_3|$ with $C$ as in the statement, which together with the triangle inequality
\[
|A_1A_2|\leq |A_1A_3|+|A_2A_3|,
\]
implies the claim.
\end{proof}

For every couple of distinct points $\{A, B\}\subset \R^n$ denote by
$M(A,B)$ the closed set
\[
M(A,B):=\{z\in \R^n\colon \|A-z\| =\|B-z\|\}
\]
 (called \emph{mediatrix} or \emph{equidistant set} of $\{A, B\}$).
Clearly,  $M(\lambda A, \lambda B)=\lambda M(A, B)$ for all $\lambda\geq 0$ (in fact, even
for all $\lambda\in \R$ if the norm is, as it is customary to assume, symmetric).
Further, $M(A+x, B+x)= M(A,B)$.
Observe also that $M(A,B)\cap (A B)$ is the
midpoint $C_0$ of $[A B]$ if the norm is symmetric.

\begin{lemma}\label{lm_scontr_mediatr1}
There is a constant $\bar{\varepsilon}>0$ depending only on the norm $\|\cdot\|$ such that
for every $C\in M(A,B)$ with $\angle A B C \leq 2\bar{\varepsilon}$ one has
\[
|BC|\leq 3/4 |AB| .
\]
\end{lemma}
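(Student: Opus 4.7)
The plan is to argue by contradiction, combining compactness in finite dimensions with the rigidity of supporting functionals. First I would normalize: since $M(\lambda A + x, \lambda B + x) = \lambda M(A, B) + x$ for every $\lambda > 0$ and $x \in \R^n$, and the Euclidean angle $\angle ABC$ and the ratio $|BC|/|AB|$ are invariant under translations and positive scalings, I may assume $B = 0$ and $|A| = 1$. The statement becomes: there is $\bar\varepsilon > 0$ depending only on $\|\cdot\|$ such that every Euclidean unit vector $A$ and every $C \in M(A, 0)$ with Euclidean angle $\angle(A, C) \leq 2\bar\varepsilon$ at the origin satisfies $|C| \leq 3/4$.

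Suppose the normalized statement fails for every $\bar\varepsilon > 0$, yielding sequences $A_n$ on the Euclidean unit sphere and $C_n \in M(A_n, 0)$ with $\angle(A_n, C_n) \to 0$ and $|C_n| > 3/4$. The crucial step is to show $\{|C_n|\}$ is bounded: otherwise, up to a subsequence, $t_n := |C_n| \to \infty$, and setting $v_n := C_n/t_n$ the angle condition forces $v_n, A_n \to A$ for a common Euclidean unit vector $A$. Dividing the mediatrix equation $\|C_n\| = \|A_n - C_n\|$ by $t_n$ and using positive homogeneity and symmetry of $\|\cdot\|$ yields
\[
\|v_n\| = \|v_n - A_n/t_n\|.
\]
Choose a supporting functional $w_n^\ast$ of $\|\cdot\|$ at $v_n - A_n/t_n$, so that $w_n^\ast(v_n - A_n/t_n) = \|v_n - A_n/t_n\|$ with dual norm $\|w_n^\ast\|_\ast = 1$. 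Then
\[
w_n^\ast(v_n) = \|v_n - A_n/t_n\| + w_n^\ast(A_n)/t_n = \|v_n\| + w_n^\ast(A_n)/t_n,
\]
while the bound $w_n^\ast(v_n) \leq \|v_n\|$ gives $w_n^\ast(A_n) \leq 0$. Extracting a further subsequence $w_n^\ast \to w^\ast$ with $\|w^\ast\|_\ast = 1$ by compactness of the dual unit sphere in finite dimensions, and using $v_n - A_n/t_n \to A$ together with continuity of $\|\cdot\|$, I find $w^\ast(A) = \|A\|$, so $w^\ast$ supports $\|\cdot\|$ at $A \neq 0$ and hence $w^\ast(A) = \|A\| > 0$. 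On the other hand, continuity gives $w_n^\ast(A_n) \to w^\ast(A)$, so $w^\ast(A) \leq 0$, a contradiction.

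With $\{|C_n|\}$ bounded, I then pass to a convergent subsequence $A_n \to A$ (still on the Euclidean unit sphere) and $C_n \to C$. By continuity of $\|\cdot\|$, one has $C \in M(A, 0)$, and the vanishing angle at the origin forces $C = tA$ for some $t \geq 0$; the mediatrix equation then reads $t \|A\| = |1 - t|\, \|A\|$, whence $t = 1/2$ (since $\|A\| > 0$) and $|C| = 1/2 < 3/4$, contradicting $|C_n| > 3/4$ in the limit. The main obstacle is the boundedness step: for a general norm $M(A, 0)$ can be an unbounded hypersurface, and ruling out that its intersection with a narrow Euclidean cone around the ray $\R_+ A$ escapes to infinity requires the rigidity that any supporting functional of $\|\cdot\|$ at the limit direction $A$ attains the positive value $\|A\|$ at $A$ itself.
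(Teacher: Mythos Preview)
Your proof is correct and follows essentially the same compactness--contradiction strategy as the paper: normalize so that $B=0$ and $|A|=1$, assume a sequence of counterexamples $(A_n,C_n)$ with $\angle(A_n,C_n)\to 0$ and $|C_n|>3/4$, pass to limits, and derive a contradiction from the fact that $M(A,0)\cap (0A)$ consists only of the midpoint.

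The one substantive difference is that you explicitly prove the boundedness of $\{C_n\}$ before extracting a convergent subsequence, whereas the paper simply asserts $C_k\to C\in\R^n$ without justification. Since $M(A,0)$ is in general an unbounded hypersurface, and the naive limit of $\|C_n/|C_n|\|=\|C_n/|C_n|-A_n/|C_n|\|$ yields only the tautology $\|A\|=\|A\|$, this step genuinely requires an argument. Your use of supporting functionals---showing that a limit functional $w^\ast$ would simultaneously satisfy $w^\ast(A)=\|A\|>0$ (as a support at $A$) and $w^\ast(A)\leq 0$ (from $w_n^\ast(A_n)\leq 0$)---is a clean way to close this gap, and in fact patches a small omission in the paper's own proof.
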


\begin{proof}
Since the statement is invariant with respect to translation and scaling, we may assume without loss of generality that
$|AB|=1$ and $B$ is the origin,
so that $M(A,B)$ depends
only on the direction $\nu\in S^{n-1}$ of the segment $[BA]$.
Let $\varepsilon(\nu)$ stand for the maximum angle
$\alpha$ such that
for
$C\in M(A,B)$ with $\angle ABC \leq \alpha$
 one has $|BC|\leq 3/4$.
It suffices to set now $\bar{\varepsilon}:=\inf_{\nu\in S^{n-1}}\varepsilon(\nu)$ and observe that $\bar{\varepsilon}>0$.
In fact, otherwise there is a sequence $\{A_k\}\subset S^{n-1}$, $A_k\to A$, and $\{C_k\}\subset M(A_k,B)$,
$C_k\to C\in \R^n$ for some $A\in S^{n-1}$ and $C\in \R^n$
with $\angle A_k B C_k \to 0$ as $k\to  \infty$ and $|BC_k|>3/4$.  Clearly therefore $C\in M(A,B)$, $|BC|\geq 3/4$ and
 $\angle ABC = 0$, hence $C\in (AB)$, which implies that $C$ is the midpoint of the segment $[AB]$ and hence
$|BC|=1/2$, this contradiction concluding the proof.
\end{proof}

\begin{remark}\label{rm_sc_mediatr1} The proof of Lemma~\ref{lm_scontr_mediatr1} depends essentially on the fact that
for $\{C\}:=M(A,B)\cap (A B)$ one has $C=C_0$, the midpoint of $[A B]$.
It is worth noting however that if the norm $\|\cdot\|$ is not assumed to be symmetric (i.e.\
one does not have $\|x\|=\|-x\|$ for all $x\in E$), then
$M(A,B)\cap (A B)$ is still a singleton $\{C\}$, but it does not in general coincide with $C_0$, and one only has
\[
c |AB| \leq |AC| \leq (1-c) |AB|
\]
for some $c\in (0,1)$ depending only on $\|\cdot\|$.
Thus the claim of Lemma~\ref{lm_scontr_mediatr1} should be changed in this case to
\[
|BC|\leq (1-\bar c) |AB|
\]
for some $\bar c\in (0,1)$ depending only on $\|\cdot\|$.
\end{remark}

\section*{Acknowledgements}
We thank Antoine Lemenant for introducing us the problem and checking our proof, and Joseph Gordon for careful reading of the manuscript and numerous useful comments.

\bibliographystyle{plain}
\def\cprime{$'$} \def\cprime{$'$} \def\cprime{$'$} \def\cprime{$'$}

\end{document}